\newtheorem{Thm}{Theorem}[section]
\newtheorem{corollary}[Thm]{Corollary}
\newtheorem{lem}[Thm]{Lemma}
\newtheorem{prop}[Thm]{Proposition}
\newtheorem{definition}[Thm]{Definition}
\newtheorem{theorem}[Thm]{Theorem}
\newcommand{\bitem}{\begin{itemize}}
\newcommand{\eitem}{\end{itemize}}
\newcommand{\benum}{\begin{enumerate}}
\newcommand{\eenum}{\end{enumerate}}
\newcommand{\beq}{\begin{equation}}
\newcommand{\eeq}{\end{equation}}
\newcommand{\ip}[2]{\langle#1,#2\rangle}
\newcommand{\absip}[2]{| \langle#1,#2\rangle |}
\newcommand{\spann}{\mbox{\rm span}}
  \newcommand{\R}{\mathbb{R}}
 \newcommand{\Z}{\mathbb{Z}}
\DeclareMathOperator*{\supp}{supp}
\def\ZZ{\mathbb{Z}}
\def\RR{\mathbb{R}}
\def\ZZ{\mathbb{Z}}
\def\SS{\mathbb{S}}
\newcommand{\bR}{{\mathbb R}}
\def\cB{{\mathcal{B}}}
\def\cD{{\mathcal{D}}}
\def\cSH{{\mathcal{S}\mathcal{H}}}
\newcommand{\gk}[1]{{\color{black}{#1}}}
\newcommand{\wq}[1]{{\color{black}{#1}}}
\begin{document}

\title{Irregular Shearlet Frames: Geometry and Approximation Properties}

\author[P. Kittipoom]{Pisamai Kittipoom}
\address{Faculty of Science, Prince of Songkla University, Hat Yai, Songkhla 90112, Thailand}
\email{pisamai.k@psu.ac.th}

\author[G. Kutyniok]{Gitta Kutyniok}
\address{Institute of Mathematics, University of Osnabr\"uck, 49069 Osnabr\"uck, Germany}
\email{kutyniok@math.uni-osnabrueck.de}

\author[W.-Q Lim]{Wang-Q Lim}
\address{Institute of Mathematics, University of Osnabr\"uck, 49069 Osnabr\"uck, Germany}
\email{wlim@math.uni-osnabrueck.de}

\thanks{P.K. acknowledges support from the Prince of Songkla
University Grant PSU 0802/2548. G.K. would like to thank Stephan
Dahlke, Christopher Heil, Demetrio Labate, Gabriele Steidl, and Gerd Teschke for
various discussions on related topics. G.K. and W.-Q L. acknowledge
support from DFG Grant SPP-1324, KU 1446/13-1.}

\begin{abstract}
Recently, shearlet systems were introduced as a means to derive efficient encoding methodologies
for anisotropic features in 2-dimensional data with a unified treatment of the continuum
and digital setting. However, only very few construction strategies for discrete shearlet
systems are known so far.

In this paper, we take a geometric approach to this problem. Utilizing the close connection
with group representations, we first introduce and analyze an upper and lower weighted shearlet
density based on the shearlet group. We then apply this geometric measure to provide necessary
conditions on the geometry of the sets of parameters for the associated shearlet systems to
form a frame for $L^2(\RR^2)$, either when using all possible generators or a large class
exhibiting some decay conditions. While introducing such a feasible class of shearlet generators,
we analyze approximation properties of the associated shearlet systems, which themselves lead to
interesting insights into homogeneous approximation abilities of shearlet frames. We also
present examples, such a oversampled shearlet systems and co-shearlet systems, to illustrate
the usefulness of our geometric approach to the construction of shearlet frames.
\end{abstract}

\keywords{Amalgam spaces, density, frame, geometry, homogeneous approximation property, shearlet, shearlet group}

\subjclass{Primary: 42C40; Secondary: 42C15, 43A05, 43A65}

\maketitle

\section{Introduction}
\label{sec:intro}

A significant percentage of the data deluge we face today consists of 2D imaging data,
which issues the challenge to provide adapted efficient encoding methodologies. Typically,
this data is modeled in a continuum setting as functions in
$L^2(\RR^2)$. Recently, a novel directional representation system -- so-called
{\em shearlets} -- has emerged which provides a unified treatment of such continuum
models as well as digital models, allowing, for instance, a precise resolution of
wavefront sets, optimally sparse representations of cartoon-like images, and associated
fast decomposition algorithms (see \cite{GL07,DKS08,ELL08,KL09,Gro09a,Lim09,KL10} and
references therein).

Shearlet systems are, loosely speaking, systems generated by one single generator
with parabolic scaling, shearing, and translation operator applied to it. However,
at this moment, there exist only very few construction strategies for discrete shearlet
systems. Evidently, such strategies should in particular allow to monitor frame properties of
the generated system.

One construction approach consists in carefully choosing a feasible set
of (parabolic scaling, shearlet, translation)-parameters which leads to a shearlet frame, while
-- as a first step -- allowing all
choices of generators. A successful methodology in this
respect -- first introduced for Gabor systems, and then later applied to wavelet systems
(see the excellent survey paper \cite{Hei07} and the references therein) --
took the viewpoint of understanding the impact of the geometry of sets of parameters
to frame properties of the associated systems.

This will also be the viewpoint taken in this paper. It will lead us to different
necessary conditions on the geometry of the sets of parameters for the associated
discrete shearlet systems to form a frame for $L^2(\RR^2)$, either when using all possible
generators or a large class exhibiting some decay conditions. Interestingly,
along the way we will also analyze and prove homogeneous approximation properties of
shearlet systems.


\subsection{Weighted Shearlet Systems}
\label{subsec:weighted_shearlets}

Shearlet systems are designed to efficiently encode anisotropic features such as singularities
concentrated on lower dimensional embedded manifolds. To achieve optimal sparsity,
shearlets are scaled according to a parabolic scaling law encoded in the {\em parabolic scaling matrix}
$A_a$, $a>0$, and exhibit directionality by parameterizing slope encoded in the {\em shear matrix}
$S_s$, $s \in \bR$, defined by
\[
A_a =
\begin{pmatrix}
  a & 0\\ 0 & \sqrt{a}
\end{pmatrix}
\qquad\mbox{and}\qquad
S_s = \begin{pmatrix}
  1 & s\\ 0 & 1
\end{pmatrix},
\]
respectively. Hence, shearlet systems are based on three parameters: $a > 0$ being the {\em scale parameter}
measuring the resolution, $s \in \RR$ being the {\em shear parameter} measuring the directionality,
and $t \in \RR^2$ being the {\em translation parameter} measuring the position.
This parameter space $\RR^+ \times \RR \times \RR^2$ can be endowed with the group operation
\[
(a,s,t) \cdot (a',s',t') = (a'a,s'+s\sqrt{a'},t'+S_{s'}A_{a'}t),
\]
leading to the so-called {\em shearlet group} $\SS$. The expert reader will notice that this definition
deviates from the shearlet group first introduced in \cite{DKMSST08}. It will however be proven in
Subsection \ref{subsec:group} that these groups are isomorphic. The reasons for the deviation will
be discussed below. For now, let us say that this choice will allow us to measure the geometry of discretization
of the parameter space in a meaningful way.

As also first noticed in \cite{DKMSST08}, shearlet systems arise from a group representation of the
shearlet group. The shearlet group $\SS$ provides the unitary group representation -- we leave the
straightforward proof that this is indeed a group representation of $\SS$ to the reader --
\[
\sigma : \SS \to \mathcal{U}(L^2(\RR^2)), \quad (\sigma(a,s,t)\psi)(x) = a^{3/4} \psi(S_s A_a x-t),
\]
which will be the main building structure of continuous shearlet systems.
To obtain further flexibility, we extend the definition of the customarily utilized shearlet systems and
allow a weight function $w : \SS \to \RR^+$ to be incorporated in the definition of each single shearlet
function. For each $\psi \in L^2(\RR^2)$, we now define a {\em continuous weighted shearlet system} by
\begin{eqnarray*}
\cSH(\psi,w)
& = & \{w(a,s,t)^{1/2} \sigma(a,s,t)\psi : (a,s,t) \in \SS\}\\
& = & \{w(a,s,t)^{1/2} a^{3/4} \psi(S_s A_a \cdot\,-t) : (a,s,t) \in \SS\}.
\end{eqnarray*}
This definition coincides with the customarily defined continuous shearlet systems as defined, for instance,
in \cite{GKL06} -- except the additional flexibility provided by allowing a weight function. However,
the careful reader will notice that the different form -- due to utilizing a different group representation
-- requires a slightly different interpretation of the behavior of the parameters than in \cite{KL09},
but containing the same information. For some $\psi \in L^2(\RR^2)$, we further introduce the
{\em continuous weighted shearlet transform} given by
\beq \label{eq:CST}
\cSH_{\psi,w} : L^2(\RR^2) \to L^2(\SS, \tfrac{dadsdt}{a}), \quad \cSH_{\psi,w} f(a,s,t) = \ip{f}{w(a,s,t)^{1/2} \sigma(a,s,t)\psi}.
\eeq
If $w \equiv 1$, we sometimes also write $\cSH_{\psi}$.

Our main concern in this paper is to derive a deep geometric understanding of the discretization
process of the continuous weighted shearlet transform. To allow any possible discretization to be
investigated, we let $\Lambda$ be any discrete subset of $\SS$, and, for some function $\psi \in L^2(\RR^2)$
and some weight function $w : \Lambda \to \RR^+$, define the {\em weighted shearlet system} by
\[
\cSH(\psi,\Lambda,w) = \{w(a,s,t)^{1/2} a^{3/4} \psi(S_s A_a\cdot\,-t) : (a,s,t) \in \Lambda\}.
\]
(If $w \equiv 1$, we sometimes also write $\cSH(\psi,\Lambda)$.)
Such a shearlet system is typically referred to as an {\em irregular} (weighted) shearlet system
to separate it from the special case of {\em regular} (sometimes also called {\em classical}) shearlet
systems, which for $a  > 1$ and $b, c > 0$, are defined by
\beq \label{eq:classicalshearlets}
\{a^{3j/4} \psi(S_{bk} A_{a^j}\cdot\,-cm) : j, k \in \ZZ, m \in \ZZ^2\}.
\eeq
Observe that this system can be regarded as $\cSH(\psi,\Lambda,w)$ with $\Lambda = \{(a^{j}, bk, cm)
: j, k \in \ZZ,$ \\ $m \in \ZZ^2\}$ and $w \equiv 1$.

We should mention that the construction of continuous shearlet systems and their discretization
is paralleled by the construction of what became to be known as {\em shearlets on the cone} or
{\em cone-adapted shearlet systems}. These shearlet systems do not arise from a group representation,
but ensure a more equal treatment of the different directions. They are closely related to the
shearlet systems just defined, where the group operation is performed on four cones in which the
frequency domain is split into. See, for instance, the paper \cite{KKL10}, in which these systems
and the discretization procedure is precisely described. For our purposes though, the group structure
will play an essential role.


\subsection{Construction of Discrete Shearlets: State-of-the-Art and Fu\-ture Goals}

Shear\-lets were introduced in \cite{GKL06}, where only regular shearlet systems were considered.
For a more extensive, insightful study of the construction of regular shearlet systems with
`nice' frame properties, we refer to the recent paper \cite{Gro09b}.

However, to allow more flexibility in the choice of parameters, for instance, being able to
adapt it to specific applications and to choose it to ensure specific properties of the
associated shearlet system, as well as to analyze stability of the discretization of the
continuous shearlet transform requires a deep understanding of irregular shearlet systems.
The first notice of general irregular shearlet systems can be found in \cite{KL07}. In this
work some sufficient conditions for band-limited generators to lead to a shearlet frame with
control of the frame bounds were derived. This work was later extended by \cite{KKL10} to much
larger classes of shearlet generators, in particular, encompassing compactly supported generators.

However, these works include only a very particular class of discrete sets of parameters, those
which are in some sense `close' to the set of parameters associated with regular shearlet
systems. The main objectives of our analysis of irregular shearlet systems are therefore to
\bitem
\item[(1)] derive frame properties,
\item[(2)] consider all discrete subsets of the shearlet group as well as particular subsets,
\item[(3)] understand the stability of discretization of the continuous shearlet transform,
\item[(4)] analyze approximation properties.
\eitem

Certainly, we need some control on the properties of discrete subsets of $\SS$, and in this
regard we will take a geometric viewpoint. Our main guideline will be the interplay between
the geometry of the sets of parameters and properties such as frame or approximation properties
of the associated shearlet systems.


\subsection{A Geometric Analysis}

The first systems to be analyzed by a geometric viewpoint on the set of parameters were
irregular Gabor systems \cite{Lan93}. This started a series of papers with
different foci on this topic (see \cite{Hei07} for an excellent survey with a complete
reference list). The main idea was to use Beurling density for categorizing subsets of
$\RR^2$ according to their geometric structure and providing a relation to frame properties
of the associated Gabor systems. A very interesting generalization of these ideas was
lately studied in \cite{BCHL06}, which however did not encompass, for instance, wavelet systems.

From 2003 on, these ideas were also utilized in the study of irregular wavelet frames, for which a
different notion of density -- adapted to the affine group -- needed to be introduced
(see, for instance, \cite{HK03,SZ03,SZ04,Kut07b,HK07} and the book \cite{Kut07a}). However,
for analyzing irregular wave packet frames, additional complications occurred due to the
dimension of the parameter space compared to the domain of the generators when driving a
density approach; and ideas from the Hausdorff dimension had to be incorporated, see \cite{CKS06}.
A yet different path was taken in \cite{GKS08}, when introducing a notion of density for
arbitrary locally compact abelian groups which allowed to derive results paralleling
those from Landau \cite{Lan93} for these general groups.

With this paper, we follow the philosophy taken in \cite{Kut07a} for the analysis of
irregular wavelet frames. Based on the close connection of shearlet systems with a
particular unitary representation of the shearlet group $\SS$, we will introduce an
upper and lower shearlet density for providing meaningful measures for discrete subsets of $\SS$.
Armed with this measure, we derive necessary conditions in terms of finite upper and
positive lower shearlet density of a set of parameters for the associated shearlet
system -- either when using all possible generators or a large class exhibiting some
decay conditions -- to possess an upper and a lower frame bound, respectively (see
Theorem \ref{theo:main}). We will
also be able to distinguish different classes of sets of parameters, for instance, those
coming from regular shearlet systems we well as the here newly introduced oversampled
shearlet systems and co-shearlet systems, showing that co-shearlet systems can never
form a frame (see Section \ref{sec:examples}).

Summarizing, considering `only' a particular aspect of the geometry of a discrete subset of
$\SS$ leads to excluding conditions for the associated shearlet frames. However,
more is true. In fact, some of these conditions are paralleled by approximation
properties, which are worth considering them by themselves.


\subsection{Homogeneous Approximation}

Already in \cite{RS95} in the situation of Gabor systems and later (see, \cite{HK07})
for wavelet systems, the study of density properties was linked to specific approximation
properties of the accompanying systems, the so-called Homogeneous Approximation Property
(HAP). This link was exploited especially for deriving
necessary conditions on the existence of a lower frame bound. These ideas were later
extended to general coherent frames in the enlightening paper \cite{Gro08}, which however,
due to the generality of the setting, does not provide the full link to density considerations.

The common bracket in these studies is to analyze whether approximating a function by
a selection of elements of the system under consideration, with their parameters being
contained in a box in parameter space, is invariant under simultaneously translating
the function and the box; in this sense, whether this approximation is homogeneous.
The benefits of establishing this property are: (1) having control of approximation
properties of finitely many elements from a shearlet frame when shifting their set
of parameters by the group action, (2) obtaining insight into the possibility of
extending the elaborate framework from \cite{BCHL06} to the shearlet setting,
since the HAP is the heart of this theory, (3) understanding whether or not the
system exhibits a Nyquist phenomenon as Gabor frames do, and, (4) finally, to provide
the tools for deriving a necessary condition on the existence of a lower frame bound in
terms of density properties.

In this paper, we will show that indeed this property also holds for irregular shearlet frames
with generators satisfying some weak decay properties (see Theorem \ref{theo:hap}). For this,
we introduce a new class of shearlet generators $\cB_0$, whose associated continuous
shearlet transform belongs to a particular amalgam space, i.e., for which a particular
mixed (local-global) norm is finite. We anticipate that this class could also be useful
in other settings.


\subsection{Main Contributions}

Our main contribution of this paper is three-fold. Firstly, we derive a meaningful
geometric measure for analysis of arbitrary discrete subsets of the shearlet
group. Secondly, with Theorem \ref{theo:main}, we prove necessary conditions on the
geometry of the sets of parameters for the associated discrete shearlet systems to
form a frame for $L^2(\RR^2)$. And, thirdly, we analyze the ability of certain
irregular shearlet frames for homogeneously approximating functions (see
Theorem \ref{theo:hap}), thereby introducing a set of generators, $\mathcal{B}_0$,
which we expect to be useful for other tasks.


\subsection{Future Horizons}

In addition, this paper opens the door for the following future research directions.

\bitem
\item {\it Geometry of Parameters of Cone-Adapted Shearlets.}
The variant of shearlet systems we consider in this paper exhibits a useful
relation with group representations. Cone-adapted shearlet systems do not
possess this property, which makes density considerations for these especially
challenging. Nevertheless, success in this direction would
be highly beneficial for constructing this different class of shearlet
systems. We anticipate that the closeness of the two variants of shearlet
systems allows to carry over some of the main ideas from our analysis in this
paper.

\item {\it Density Analysis of Higher-Dimensional Wavelets.}
Higher-dimensional wavelet systems are associated with sets of parameters
$\{(A^j,bk): j \in \ZZ, k \in \ZZ^n\}$, where $A \in GL(n,\RR)$ and $b \in \RR^n$.
These systems are generated by the affine group $GL(n,\RR) \ltimes \RR^n$,
and the question arises whether geometric density conditions can also be
exploited here. However, in 2D this task is quite involved due to the
complicated inner structure of the group. Although associated with a slightly
different set of parameters, the analysis presented here could give some insights
at least for $n=2$.

\item {\it Homogeneous Approximation Property of Non-Band-Limited Shearlets.}
In this paper we show that the Homogeneous Approximation Property holds for a
shearlet frames being generated by band-limited generators satisfying some
weak spatial decay properties. This might provide some understanding of how
to extend this result to the situation of non-band-limited generators, which
are important for several practical applications.
\eitem


\subsection{Outline}

In Section \ref{sec:geometry}, we introduce the necessary group-theoretical framework
as well as the so-called weighted shearlet density, which will be our main means
for analyzing the geometry of sets of parameters of shearlet systems. Section \ref{sec:B0}
is then devoted to a particular set of generators with favorable decay conditions of
the associated continuous shearlet transform. Then, in Section \ref{sec:approximation},
we analyze approximation properties of shearlet systems, in particular, concerning
the Homogeneous Approximation Property. Section \ref{sec:necessary} then exploits
these results to prove the desired necessary conditions on the geometry of the sets of
parameters for the associated shearlet systems to form a frame for $L^2(\RR^2)$;
and applications of these to particular shearlet systems such as oversampled shearlet
systems and co-shearlet systems are the focus of Section \ref{sec:examples}. Since
several proofs are lengthy and highly technical, we outsourced those to Section \ref{sec:proofs}.


\section{Geometry of Shearlet Parameters}
\label{sec:geometry}

As already elaborated upon in the introduction, a geometric viewpoint will be our driving force.
The basic necessary framework to regard shearlet systems as being generated by a particular
locally compact group -- the shearlet group $\SS$ -- via a unitary group representation,
was already established in Subsection \ref{subsec:weighted_shearlets}. We now first discuss
this group in more detail and study the relation with the shearlet group introduced in
\cite{DKMSST08}. Based on this, we next introduce a means to `measure' the geometric
structure of discrete parameter sets in terms of denseness in the group $\SS$.

\subsection{Group Structure}
\label{subsec:group}

Let $\tilde{\SS}$ be the shearlet group introduced in \cite{DKMSST08}, which is defined as
the set $\RR^+ \times \RR \times \RR^2$ with group multiplication given by
\[
(a,s,t) \odot (a',s',t') = (aa',s+s'\sqrt{a},t+S_{s}A_{a}t').
\]
We have the following

\begin{lem}
The groups $\SS$ and $\tilde{\SS}$ are isomorphic via the group isomorphism
\[
\Phi : \SS \to \tilde{\SS}, \quad \Phi(a^{-1}, -\tfrac{s}{\sqrt{a}}, S_{-s/\sqrt{a}} A_{a^{-1}}t).
\]
\end{lem}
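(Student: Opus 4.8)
I read the displayed line as defining $\Phi(a,s,t)=\bigl(a^{-1},\,-\tfrac{s}{\sqrt a},\,S_{-s/\sqrt a}A_{a^{-1}}t\bigr)$; since the right-hand side again lies in $\RR^+\times\RR\times\RR^2$, this is a well-defined map $\SS\to\tilde\SS$. The whole argument rests on three elementary facts: the group laws of the matrices, $A_aA_{a'}=A_{aa'}$ and $S_sS_{s'}=S_{s+s'}$; the conjugation rule $A_aS_s=S_{s\sqrt a}A_a$, which is a one-line $2\times 2$ computation; and the fact --- immediate from the group law of $\SS$ --- that the identity of $\SS$ is $(1,0,0)$ with $(a,s,t)^{-1}=\bigl(a^{-1},-\tfrac{s}{\sqrt a},-S_{-s/\sqrt a}A_{a^{-1}}t\bigr)$.

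For \emph{bijectivity} I would simply exhibit the inverse: solving $\Phi(a,s,t)=(b,r,v)$ coordinate by coordinate gives $a=b^{-1}$, $s=-r/\sqrt b$, and $t=A_{b^{-1}}S_{-r}v$, so that $\Phi^{-1}(b,r,v)=\bigl(b^{-1},-\tfrac{r}{\sqrt b},A_{b^{-1}}S_{-r}v\bigr)$; since each coordinate assignment is a bijection, so is $\Phi$. Conceptually this is no accident: the product $\odot$ on $\tilde\SS$ is exactly the product of $\SS$ with its two arguments interchanged, i.e.\ $\tilde\SS=\SS^{\mathrm{op}}$, so the inversion map $g\mapsto g^{-1}$ is automatically a group isomorphism $\SS\to\tilde\SS$, and $\Phi$ is this inversion followed by the map $(a,s,t)\mapsto(a,s,-t)$, which one checks directly is an automorphism of $\tilde\SS$. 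That remark already settles the lemma, but I would still include the explicit verification below to keep the paper self-contained.

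The substantive step is the \emph{homomorphism identity} $\Phi(g\cdot g')=\Phi(g)\odot\Phi(g')$ for $g=(a,s,t)$, $g'=(a',s',t')$. Expanding $g\cdot g'=\bigl(a'a,\,s'+s\sqrt{a'},\,t'+S_{s'}A_{a'}t\bigr)$ and applying $\Phi$, and separately applying the definition of $\odot$ to $\Phi(g)=\bigl(a^{-1},-\tfrac{s}{\sqrt a},S_{-s/\sqrt a}A_{a^{-1}}t\bigr)$ and $\Phi(g')=\bigl((a')^{-1},-\tfrac{s'}{\sqrt{a'}},S_{-s'/\sqrt{a'}}A_{(a')^{-1}}t'\bigr)$, the first two coordinates of both sides agree after the trivial simplification $\tfrac{s'+s\sqrt{a'}}{\sqrt{a'a}}=\tfrac{s'}{\sqrt{a'a}}+\tfrac{s}{\sqrt a}$. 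Writing $u=-\tfrac{s'+s\sqrt{a'}}{\sqrt{a'a}}$ for this common second coordinate, equality of the third coordinates is equivalent to
\[
S_u A_{(a'a)^{-1}}\bigl(t'+S_{s'}A_{a'}t\bigr)=S_{-s/\sqrt a}A_{a^{-1}}t+S_{-s/\sqrt a}A_{a^{-1}}S_{-s'/\sqrt{a'}}A_{(a')^{-1}}t'.
\]
Since $t,t'$ are arbitrary, this splits into two matrix identities: $S_uA_{(a'a)^{-1}}=S_{-s/\sqrt a}A_{a^{-1}}S_{-s'/\sqrt{a'}}A_{(a')^{-1}}$ (coefficient of $t'$) and $S_uA_{(a'a)^{-1}}S_{s'}A_{a'}=S_{-s/\sqrt a}A_{a^{-1}}$ (coefficient of $t$).

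The main --- and essentially the only --- obstacle is verifying these last two identities, a bookkeeping exercise: since $S$ and $A$ do not commute and each interchange rescales the shear parameter, one must move every scaling matrix past every shear matrix in the correct order. I would do this once and for all by recording the auxiliary relations $A_{a^{-1}}S_{-s'/\sqrt{a'}}=S_{-s'/\sqrt{a'a}}A_{a^{-1}}$ and $A_{(a'a)^{-1}}S_{s'}=S_{s'/\sqrt{a'a}}A_{(a'a)^{-1}}$ (both instances of the conjugation rule), then substituting and collapsing with $A_aA_{a'}=A_{aa'}$ and $S_sS_{s'}=S_{s+s'}$; each identity then falls out in two or three lines, the exponent arithmetic reducing precisely to the second-coordinate simplification already used. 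Everything else is routine.
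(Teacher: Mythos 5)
Your proposal is correct, and its computational core is the same argument the paper gives: one expands $\Phi((a,s,t)\cdot(a',s',t'))$ and $\Phi(a,s,t)\odot\Phi(a',s',t')$ and matches coordinates, the only real work being the interchange rule $A_aS_s=S_{s\sqrt a}A_a$ together with $A_aA_{a'}=A_{aa'}$, $S_sS_{s'}=S_{s+s'}$; your two matrix identities (the coefficients of $t$ and of $t'$) do check out, and your explicit formula $\Phi^{-1}(b,r,v)=\bigl(b^{-1},-\tfrac{r}{\sqrt b},A_{b^{-1}}S_{-r}v\bigr)$ is the bijectivity argument the paper leaves implicit with the remark that $\Phi$ is a bijection. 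What you add beyond the paper is the structural observation that $\odot$ is precisely the opposite multiplication of $\SS$, so that inversion $g\mapsto g^{-1}$ is automatically an isomorphism $\SS\to\tilde\SS$ and $\Phi$ is this inversion followed by the sign flip $(a,s,t)\mapsto(a,s,-t)$, which is an automorphism of $\tilde\SS$; I verified that $(a,s,t)^{-1}=\bigl(a^{-1},-\tfrac{s}{\sqrt a},-S_{-s/\sqrt a}A_{a^{-1}}t\bigr)$ and that the sign flip respects $\odot$, so this shortcut is sound. This alternative buys a proof with essentially no matrix bookkeeping and an explanation of \emph{why} the formula for $\Phi$ looks like a group inverse, whereas the paper's (and your backup) direct computation has the virtue of being entirely self-contained and of producing along the way the expanded expression for $\Phi(g)\odot\Phi(g')$ that the paper reuses implicitly when transferring the Haar measure and the shearlet transform from $\tilde\SS$ to $\SS$. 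Either route proves the lemma; no gaps.
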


\begin{proof}
It is easily checked that $\SS$ and $\tilde{\SS}$ are groups (for $\tilde{\SS}$, see \cite{DKMSST08}).
Also, $\Phi$ is a bijective map. To prove the claim, let $(a,s,t), (a',s',t') \in \RR^+ \times \RR
\times \RR^2$ be arbitrary. Then
{\allowdisplaybreaks
\begin{eqnarray*}
\lefteqn{\Phi((a,s,t) \cdot (a',s',t'))}\\
& = & ((a'a)^{-1}, -\tfrac{s'+s\sqrt{a'}}{\sqrt{a'a}},S_{-(s'+s\sqrt{a'})/\sqrt{a'a}} A_{(a'a)^{-1}}(t'+S_{s'}A_{a'}t))\\
& = & (a^{-1} a'^{-1}, -\tfrac{s}{\sqrt{a}} -\tfrac{s'}{\sqrt{a'a}},S_{-s/\sqrt{a}} A_{a^{-1}}S_{-s'/\sqrt{a'}} A_{a'^{-1}}t'
+S_{-s/\sqrt{a}} A_{a'^{-1}}t)\\
& = & \Phi(a,s,t) \odot \Phi(a',s',t'),
\end{eqnarray*}
}
the lemma is proven.
\end{proof}

This now allows us to transfer results from \cite{DKMSST08} and \cite{DKST09} to the considered
situation. First, by exploiting the just defined isomorphism $\Phi$ and left-invariant Haar
measures of $\tilde{\SS}$ (see \cite{DKST09}), we can easily conclude that a left-invariant Haar
measure on $\SS$ is given by $\mu_\SS = \frac{da ds dt}{a}$.

For $\psi \in L^2(\RR^2)$ and $w : \SS \to \RR^+$, we further have the following correspondence
between $\cSH_{\psi,w}$ and the continuous shearlet transform from \cite{DKMSST08,DKST09},
\[
\widetilde{\cSH}_{\psi} : L^2(\RR^2) \to L^2(\tilde{\SS}),
\quad \widetilde{\cSH}_{\psi} f(a,s,t) = \ip{f}{a^{-3/4} \psi(A_a^{-1} S_s^{-1} (\cdot\,-t))}.
\]

\begin{lem} \label{lem:relation_SH_tildeSH}
For $\psi, f \in L^2(\RR^2)$,
\[
\widetilde{\cSH}_{\psi} f(a,s,t) = \cSH_{\psi} f (\Phi^{-1}(a,s,t)), \quad (a,s,t) \in \tilde{\SS}.
\]
\end{lem}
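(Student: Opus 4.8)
The plan is to verify Lemma \ref{lem:relation_SH_tildeSH} by a direct computation, unwinding both sides into explicit integral (inner-product) expressions and matching them under the change of variables induced by $\Phi$. Recall that by definition $\cSH_\psi f(a',s',t') = \ip{f}{\sigma(a',s',t')\psi} = \ip{f}{(a')^{3/4}\psi(S_{s'}A_{a'}\cdot - t')}$, while $\widetilde{\cSH}_\psi f(a,s,t) = \ip{f}{a^{-3/4}\psi(A_a^{-1}S_s^{-1}(\cdot - t))}$. So the task reduces to checking that, when we set $(a',s',t') = \Phi^{-1}(a,s,t)$, the function $(a')^{3/4}\psi(S_{s'}A_{a'} x - t')$ coincides pointwise (in $x$) with $a^{-3/4}\psi(A_a^{-1}S_s^{-1}(x - t))$.

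First I would compute $\Phi^{-1}$ explicitly. Since $\Phi(a,s,t) = (a^{-1}, -s/\sqrt{a}, S_{-s/\sqrt a}A_{a^{-1}}t)$, inverting the first two coordinates gives $a' = a^{-1}$ and $s' = -s/\sqrt{a} = -s\sqrt{a'}\cdot$ (so $s' \sqrt{a'} \cdot$ — more cleanly, $s = -s'\sqrt{a}$, hence $s = -s'/\sqrt{a'}$, i.e. $s' = -s\sqrt{a'}$... one must be careful here: from $s = -s'\sqrt{a'}^{\,-1}$ we get $s' = -s\sqrt{a'}$, and since $a' = a^{-1}$ this is $s' = -s/\sqrt a$). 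Wait — I should just carefully treat $\Phi^{-1}$ as the map sending $(a,s,t) \in \tilde\SS$ back to $\SS$; write $(a',s',t') = \Phi^{-1}(a,s,t)$, so $\Phi(a',s',t') = (a,s,t)$, which yields $(a')^{-1} = a$, $-s'/\sqrt{a'} = s$, and $S_{-s'/\sqrt{a'}}A_{(a')^{-1}}t' = t$. From these, $a' = a^{-1}$, $s' = -s\sqrt{a'} = -s/\sqrt a$, and $t' = A_{(a')^{-1}}^{-1}S_{-s'/\sqrt{a'}}^{-1}t = A_a^{-1}S_s^{-1}t$ (using $-s'/\sqrt{a'} = s$). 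Then I substitute these into $(a')^{3/4}\psi(S_{s'}A_{a'}x - t')$: the scaling factor becomes $(a^{-1})^{3/4} = a^{-3/4}$, as needed, and the argument becomes $S_{-s/\sqrt a}A_{a^{-1}}x - A_a^{-1}S_s^{-1}t$.

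The remaining point is to recognize $S_{-s/\sqrt a}A_{a^{-1}}x - A_a^{-1}S_s^{-1}t$ as $A_a^{-1}S_s^{-1}(x-t)$. This is the one genuinely computational step and the only place an error could hide, so it is the \textbf{main obstacle}, though it is routine: it amounts to the matrix identity $S_{-s/\sqrt a}A_{a^{-1}} = A_a^{-1}S_s^{-1}$. I would verify this by direct multiplication of $2\times 2$ matrices — $A_a^{-1} = \mathrm{diag}(a^{-1}, a^{-1/2})$, $S_s^{-1} = S_{-s}$, so $A_a^{-1}S_s^{-1} = \begin{pmatrix} a^{-1} & -a^{-1}s \\ 0 & a^{-1/2}\end{pmatrix}$, while $A_{a^{-1}} = \mathrm{diag}(a^{-1}, a^{-1/2})$ and $S_{-s/\sqrt a}A_{a^{-1}} = \begin{pmatrix} a^{-1} & -a^{-1/2}\cdot a^{-1/2}s \\ 0 & a^{-1/2}\end{pmatrix} = \begin{pmatrix} a^{-1} & -a^{-1}s \\ 0 & a^{-1/2}\end{pmatrix}$ — the two agree. (This identity is of course not an accident; it is exactly the compatibility built into the definition of $\Phi$ so that it intertwines $\sigma$ with the representation underlying $\widetilde{\cSH}_\psi$, and one could alternatively phrase the whole argument representation-theoretically: $\Phi$ being a group isomorphism carrying one representation to the other forces the transforms to correspond. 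But the direct substitution is shorter.) Once the matrix identity and the scaling factor are checked, the two inner products coincide for every $f$, and the lemma follows.
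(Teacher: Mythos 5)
Your proposal is correct and matches the paper's approach: the paper simply declares the lemma a direct consequence of the definitions of $\cSH$, $\widetilde{\cSH}$, and $\Phi$, and your computation (inverting $\Phi$ to get $a'=a^{-1}$, $s'=-s/\sqrt{a}$, $t'=A_a^{-1}S_s^{-1}t$, checking the scaling factor, and verifying $S_{-s/\sqrt{a}}A_{a^{-1}}=A_a^{-1}S_s^{-1}$) is exactly the unwinding of those definitions that the authors leave to the reader. The matrix identity and the resulting pointwise agreement of the analyzing functions are verified correctly, so nothing is missing.
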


\begin{proof}
This is a direct consequence of the definitions of $\cSH$, $\widetilde{\cSH}$, and $\Phi$.
\end{proof}

This relation enables us to prove isometric properties of $\cSH$ with the help of \cite[Thm. 2.5]{DKST09}.

\begin{Thm}
Let $f \in L^2(\RR^2)$, and let $\psi \in L^2(\RR^2)$ be such that
\[
C_\psi^- = \int_{\mathbb{R}}\int_{-\infty}^0 \frac{|\hat{\psi}(\xi_{1},\xi_{2})|^{2}}{\xi_{1}^{2}}\, d\xi_{1}d\xi_{2} < \infty
\quad \mbox{and} \quad
C_\psi^+ = \int_{\mathbb{R}}\int_0^\infty \frac{|\hat{\psi}(\xi_{1},\xi_{2})|^{2}}{\xi_{1}^{2}}\, d\xi_{1}d\xi_{2} < \infty,
\]
i.e., $\psi$ is {\em admissible}. Then
\[
\int_\SS \absip{f}{\sigma(a,s,t)\psi}^2 \frac{da ds dt}{a}
= C_\psi^- \cdot \int_\RR \int_{-\infty}^0 |\hat{f}(\xi_1,\xi_2)|^2 d\xi_1 d\xi_2
+ C_\psi^+ \cdot \int_\RR \int_0^\infty |\hat{f}(\xi_1,\xi_2)|^2 d\xi_1 d\xi_2.
\]

In particular, if $C_\psi = C_\psi^-  = C_\psi^+$, then $\cSH_\psi$ is a $C_\psi$-multiple of an
isometry.
\end{Thm}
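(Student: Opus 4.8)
The plan is to transport the assertion to the shearlet transform $\widetilde{\cSH}_\psi$ on the group $\tilde{\SS}$, for which the analogous reproducing formula was established in \cite{DKST09}. By Lemma~\ref{lem:relation_SH_tildeSH} we have the pointwise identity $\widetilde{\cSH}_\psi f = \cSH_\psi f \circ \Phi^{-1}$ on $\tilde{\SS}$, so a change of variables along $\Phi$ will give
\[
\int_\SS \absip{f}{\sigma(a,s,t)\psi}^2 \, \frac{da\,ds\,dt}{a} = \int_{\SS} \abs{\cSH_\psi f(g)}^2 \, d\mu_\SS(g) = \int_{\tilde{\SS}} \bigl| \widetilde{\cSH}_\psi f(h) \bigr|^2 \, d\mu_{\tilde{\SS}}(h),
\]
provided that the push-forward of $\mu_\SS$ under $\Phi$ is precisely the left Haar measure $\mu_{\tilde{\SS}}$ used in \cite{DKST09}. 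Granted this, \cite[Thm.~2.5]{DKST09} evaluates the last integral, and --- once one observes that the admissibility constants appearing there are literally the $C_\psi^{\pm}$ of the statement --- delivers the claimed expression.

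To justify the measure step I would argue as follows. Since $\Phi$ is a group isomorphism, $\Phi_*\mu_\SS$ is a left Haar measure on $\tilde{\SS}$, hence equals $c\,\mu_{\tilde{\SS}}$ for some $c>0$ by uniqueness of Haar measure, and it suffices to check $c=1$. This is exactly the computation underlying the identification $\mu_\SS = \frac{da\,ds\,dt}{a}$ made in Subsection~\ref{subsec:group}: writing $(a',s',t') = \Phi(a,s,t) = \bigl(a^{-1},\,-\tfrac{s}{\sqrt a},\, S_{-s/\sqrt a}A_{a^{-1}}t\bigr)$, the map $\Phi$ is block-triangular in the coordinates $(a,s,t)$, with diagonal blocks $\partial a'/\partial a = -a^{-2}$, $\partial s'/\partial s = -a^{-1/2}$, and $\partial t'/\partial t = S_{-s/\sqrt a}A_{a^{-1}}$, so that $\abs{\det D\Phi} = a^{-2}\cdot a^{-1/2}\cdot \det A_{a^{-1}} = a^{-2}\cdot a^{-1/2}\cdot a^{-3/2} = a^{-4}$. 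Since $\mu_{\tilde{\SS}}$ has density $(a')^{-3}$ with respect to $da'\,ds'\,dt'$, pulling it back through $\Phi$ yields the density $(a^{-1})^{-3}\cdot a^{-4} = a^{3}\cdot a^{-4} = a^{-1}$, i.e.\ exactly $\mu_\SS$; thus $c=1$.

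Combining the displayed identity with \cite[Thm.~2.5]{DKST09} proves the formula, and the ``in particular'' statement is then immediate: if $C_\psi := C_\psi^- = C_\psi^+$, the two terms on the right recombine to $C_\psi\int_{\RR^2}\abs{\hat f(\xi)}^2\,d\xi = C_\psi\norm{f}_{L^2}^2$ by Plancherel, so $\cSH_\psi$ is a $\sqrt{C_\psi}$-multiple of an isometry. I do not expect a real obstacle here; the only point demanding care is the bookkeeping of constants and conventions --- the different powers of $a$ in the two Haar measures and the Fourier normalization used in \cite{DKST09}. If a self-contained proof is preferred, the same formula follows by a direct computation on $\SS$: expand $\absip{f}{\sigma(a,s,t)\psi}^2$ on the Fourier side, transfer the shear--dilation action from $\hat\psi$ onto $\hat f$ via the substitution $\eta = (S_s A_a)^{-T}\xi$, integrate out $t$ by Plancherel and then $(s,a)$ by Fubini; the $(s,a)$-integration of $\abs{\hat f((S_s A_a)^{T}\eta)}^2$ produces the factor $\abs{\eta_1}^{-2}\int_{\operatorname{sgn}\xi_1 = \operatorname{sgn}\eta_1}\abs{\hat f(\xi)}^2\,d\xi$, and integration against $\abs{\hat\psi(\eta)}^2$ over the half-planes $\{\eta_1>0\}$ and $\{\eta_1<0\}$ reproduces $C_\psi^+$ and $C_\psi^-$ respectively.
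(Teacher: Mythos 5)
Your proposal is correct and follows essentially the same route as the paper: transfer the statement to $\widetilde{\cSH}_\psi$ via Lemma \ref{lem:relation_SH_tildeSH} and invoke \cite[Thm. 2.5]{DKST09}, with your Jacobian computation simply making explicit the Haar-measure identification $\mu_\SS = \frac{da\,ds\,dt}{a}$ that the paper records in Subsection \ref{subsec:group}. The only cosmetic discrepancy is your final phrase ``$\sqrt{C_\psi}$-multiple of an isometry,'' which is the literal consequence of $\|\cSH_\psi f\|^2 = C_\psi\|f\|_2^2$ and differs only in wording from the theorem's ``$C_\psi$-multiple.''
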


\begin{proof}
This follows immediately from \cite[Thm. 2.5]{DKST09} and Lemma \ref{lem:relation_SH_tildeSH}.
\end{proof}


\subsection{Weighted Shearlet Density}

We are now ready to introduce and derive some first basic properties of the geometric measure
for denseness of discrete weighted subsets of
$\SS$, which will be our main means to categorize and analyze different discretization
strategies of the continuous weighted shearlet transform in \eqref{eq:CST}. The upper
and lower weighted shearlet density will measure two different characteristics of a discrete subset
$\Lambda$ of $\SS$ endowed with weights $w : \Lambda \to \RR^+$:
\begin{itemize}
\item[(1)] the denseness as the average weighted number of points from $\Lambda$ lying in a `standard'
box moved in the 4-D space $\RR^+ \times \RR \times \RR^2$ by the group action, and
\item[(2)] the deviation of being `uniformly' spread in the 4-D space $\RR^+ \times
\RR \times \RR^2$ with respect to the group action.
\end{itemize}

\subsubsection{Definition}

The introduction of some system of `standard' boxes is our first concern. For the delicateness
of such and the significantly differing behavior of different choices in the affine group case,
we refer to \cite{Kut07a}. Let now $h > 0$ be the `size' of the boxes. Then we define
the box $Q_{h}$ to be the neighborhood of the unit element $(1,0,0)$ of $\SS$ given by
\[
Q_{h} = [e^{-h/2}, e^{h/2}) \times \left[-\tfrac{ h}{2}, \tfrac{h}{2}\right) \times \left[-\tfrac{h}{2}, \tfrac{h}{2}\right)^{2}.
\]
To allow measuring the density of a discrete set $\Lambda \subseteq \SS$ in the whole
4-D space $\RR^+ \times \RR \times \RR^2$, we define boxes $Q_h(x,y,z)$ centered at
arbitrary points $(x,y,z) \in \SS$ by left-translating $Q_h$ via the group action, i.e.,
\[
Q_{h}(x,y,z) = (x,y,z) \cdot Q_{h}, \quad h > 0.
\]
Notice that although we refer to these sets as `boxes', they are in fact versions
of the boxes $Q_h$ morphed by the group action.
By using the left-invariant Haar measure $\mu_{\mathbb{S}} = \frac{da ds dt}{a}$ derived in the
previous subsection, the volume of each $Q_h(x,y,z)$ can be computed to be
\[
\mu_{\mathbb{S}}(Q_{h}(x,y,z))
= \mu_{\mathbb{S}}(Q_{h})
= \int_{-\frac{h}{2}}^{\frac{h}{2}} \int_{-\frac{h}{2}}^{\frac{h}{2}} \int_{-\frac{h}{2}}^{\frac{h}{2}}
\int_{e^{-\frac{h}{2}}}^{e^{\frac{h}{2}}} \frac{1}{a}\, da\, ds\, dt_{1}\,dt_{2} =h^{4}.
\]

Since we are dealing with subsets $\Lambda$ of $\SS$ endowed with weights $w : \Lambda \to \RR^+$,
we need to decide how to incorporate the weights when counting the number of points of $\Lambda$
inside one of the just introduced boxes. For this, we define the weighted number of elements of
$\Lambda$ lying in a subset $K$ of $\mathbb{S}$ to be
\[
\#_{w}(K) = \sum_{(a,s,t) \in K} w(a,s,t).
\]

Now we collected all ingredients for introducing the notion of upper and lower shearlet density.

\begin{definition}\label{def:dens1}
Let $\Lambda$ be a discrete subset of $\mathbb{S}$, and let $w : \Lambda \to \RR^+$ be a weight
function. Then the \emph{upper weighted shearlet density} of $\Lambda$ with weights $w$ is
defined by
\[
\cD^{+}(\Lambda,w) = \limsup_{h \to \infty} \sup_{(x, y, z)\in \mathbb{S}}\frac{\#_{w} (\Lambda \cap Q_{h}(x, y, z))}{h^{4}},
\]
and the \emph{lower weighted shearlet density} of $\Lambda$ with weights $w$ is
\[
\cD^{-}(\Lambda,w) = \liminf_{h \to \infty} \inf_{(x, y, z)\in \mathbb{S}}\frac{\#_{w} (\Lambda \cap Q_{h}(x, y, z))}{h^{4}}.
\]
If $\cD^{+}(\Lambda,w) = \cD^{-}(\Lambda,w)$, we say that $\Lambda$ with weights $w$ has a
{\em uniform weighted shearlet density}.
\end{definition}

\subsubsection{Basic Properties}

In this subsection we will establish some basic properties which will be used extensively
through the paper. However, they are also illuminating by themselves as we will see. We
also wish to mention that the proofs of the two results use ideas from the proof of
\cite[Lem. 2.1, Prop. 2.2, and Prop. 2.3]{HK03}, however the action by the shearlet group
adds a significantly higher level of technical challenge to the analysis.

Weighted shearlet density measures the density in terms of the average weighted number of
points lying in boxes $Q_h(x,y,z)$. Intuitively, the upper weighted shearlet density of a
discrete weighted subset should be finite as long as the weighted points are sufficiently
separated, i.e., far apart, and the lower weighted shearlet density should be positive as
long as the weighted points are sufficiently dense, i.e., close together. Hence, for
deriving equivalent conditions to finite upper and positive lower weighted shearlet density,
the behavior of overlaps of these boxes will play an essential role.

To prepare this analysis, we first introduce the following two notions.

\begin{definition}
Let $X = \{x_{i}\}_{i \in I}$ be a sequence of elements in $\mathbb{S}$.
\begin{itemize}
\item[(i)] $X$ is called \emph{$Q_{h}$-dense in $\mathbb{S}$}, if
\[
\bigcup\limits_{i \in I} x_{i}\cdot Q_{h} = \mathbb{S}.
\]
\item[(ii)] $X$ is called \emph{separated}, if, for some $h > 0$, we have
\[
x_{i}\cdot Q_{h} \cap x_{j}\cdot Q_{h} = \emptyset, \quad i \ne j,
\]
and \emph{relatively separated}, if $X$ is a finite union of separated sets.
\end{itemize}
\end{definition}

Next, we study a particular `default' discrete subset -- related with the set of parameters
of a regular shearlet system --, with which we later on compare arbitrary discrete subsets.
We defer the lengthy, very technical proof to Subsection \ref{subsubsec:box}.

\begin{lem} \label{lem:covering}
Let $h > 0$ and $r \ge 1$, and define the discrete subset $X \subseteq \SS$ by
\[
X = \{(e^{jh}, he^{-h/4}k, he^{-h/2}m) : j, k \in \mathbb{Z}, m \in \mathbb{Z}^{2}\}.
\]
Then the following statements hold.
\begin{enumerate}
\item[(i)] $X$ is $Q_{h}$-dense in $\mathbb{S}$.
\item[(ii)] Any set $Q_{rh}(x, y, z)$ intersects at most
\[
N_{r}
:=(r+2)(r+1)^{3}\left[e^{h/2}+\frac{1}{r+1}\right]\left[e^{h}+\frac{1}{r+1}\right] \left[e^{3h/4}+\frac{1}{r+1}\right]
\]
elements in $X$. In particular, $X$ is relatively separated.
\item[(iii)] Any set $Q_{rh}(x, y, z)$ contains at least
\[
\tilde{N}_{r} :=r(r+1)^{3}e^{9h/4}
\]
elements in $X$.
\end{enumerate}
\end{lem}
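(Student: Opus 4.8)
The plan is to work directly with the group action. The key computation underlying all three parts is an explicit description of the morphed box $Q_{rh}(x,y,z)$. Writing $(x,y,z)\cdot(a',s',t')$ according to the group law, a point $(a,s,t)$ lies in $Q_{rh}(x,y,z)$ precisely when $a = xa'$ with $a' \in [e^{-rh/2},e^{rh/2})$, $s = s' + y\sqrt{a'}$ with $s' \in [-\tfrac{rh}{2},\tfrac{rh}{2})$, and $t = t' + S_{s'}A_{a'}z$ with $t' \in [-\tfrac{rh}{2},\tfrac{rh}{2})^2$. So in the $a$-coordinate the box is a dilate of a dyadic-type interval, in the $s$-coordinate it is an interval of fixed length $rh$ (independent of $x,y,z$!) centered at a point depending on $a$, and in the $t$-coordinate it is a parallelogram of fixed area $(rh)^2$ (since $\det(S_{s'}A_{a'})$ integrates correctly against $da'/a'$). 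First I would record these three observations carefully, since the translation-invariance of the side lengths in the $s$- and $t$-slices (after accounting for the $\sqrt{a'}$ and $S_{s'}A_{a'}$ twists) is exactly what makes the counts $N_r,\tilde N_r$ independent of $(x,y,z)$.

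For part (i), I would take an arbitrary $(a,s,t)\in\SS$ and produce $(j,k,m)$ with $(a,s,t) \in (e^{jh},he^{-h/4}k,he^{-h/2}m)\cdot Q_h$. One chooses $j$ so that $e^{-jh}a \in [e^{-h/2},e^{h/2})$, i.e. $a' := e^{-jh}a$ is fixed in that range; then chooses $k$ so that $s - he^{-h/4}k\sqrt{a'} \in [-h/2,h/2)$ — possible because the spacing $he^{-h/4}\sqrt{a'} \le he^{-h/4}e^{h/4} = h$, which is exactly why the grid spacing in the $k$-direction was chosen with the $e^{-h/4}$ factor; finally one chooses $m\in\ZZ^2$ so that $A_{a'}^{-1}S_{s'}^{-1}(t - he^{-h/2}S_{s'}A_{a'}m) \in [-h/2,h/2)^2$ after undoing the twist — possible because the relevant spacing is controlled by $he^{-h/2}\cdot\|A_{a'}\| \le h$. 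The arithmetic here is routine once the box description is in hand.

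For parts (ii) and (iii), I would fix $(x,y,z)$ and count lattice points $(e^{jh},he^{-h/4}k,he^{-h/2}m)$ falling in $Q_{rh}(x,y,z)$. The $j$'s that can contribute form a run of at most $r+2$ and at least $r$ consecutive integers (the $a$-slice is $[xe^{-rh/2},xe^{rh/2})$, a ratio-$e^{rh}$ interval, which meets between $r$ and $r+2$ powers of $e^h$ scaled by $x$ — here I would be slightly careful about half-open endpoints). For each admissible $j$, I would count admissible $k$: the condition is $y\sqrt{a'} - \tfrac{rh}{2} \le he^{-h/4}k < y\sqrt{a'} + \tfrac{rh}{2}$ where $a' = xe^{-jh}/x$... more precisely $a' \in [e^{-rh/2},e^{rh/2})$ varies, so the interval of admissible $he^{-h/4}k$ has length at most $rh + (\text{variation of }y\sqrt{a'})$, and the number of such $k$ is at most $\tfrac{rh}{he^{-h/4}} + 1 = re^{h/4}+1$ per unit... this is where the $e^{h/2}$, $e^h$, $e^{3h/4}$ factors in $N_r$ come from, combining the worst-case slice sizes with the $\tfrac{1}{r+1}$ corrections from rounding. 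For each admissible $(j,k)$, counting $m\in\ZZ^2$ with $he^{-h/2}m$ in a parallelogram of area $(rh)^2$ twisted by $S_{s'}A_{a'}$ gives a count controlled by $\tfrac{(rh)^2}{(he^{-h/2})^2}\cdot\|A_{a'}\|$-type bounds, again producing factors $e^{h}$ and $e^{3h/4}$. Multiplying the three counts and being systematic about the rounding error terms yields $N_r$; taking the analogous lower bounds (floors instead of ceilings, and noting every admissible $(j,k)$ forces a full sublattice of $m$'s) yields $\tilde N_r = r(r+1)^3 e^{9h/4}$.

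\textbf{Main obstacle.} The genuinely delicate part is the $m$-count in parts (ii) and (iii): unlike the $a$- and $s$-slices, the set of admissible $m$ depends jointly on $j,k$ (through $s' = s - y\sqrt{a'}$) and on $z$, and the region is a \emph{sheared} parallelogram rather than an axis-aligned box, so counting integer points requires either a careful containment of the parallelogram in a larger axis-aligned box (for the upper bound) or detecting an embedded smaller box (for the lower bound), uniformly over all $(x,y,z)$ and all admissible $(j,k)$. Getting the uniform constants to come out exactly as the stated $N_r$ and $\tilde N_r$ — in particular tracking how the $\tfrac{1}{r+1}$ terms propagate through the product of three slice-counts — is the bookkeeping-heavy core of the argument, and is presumably why the authors defer it to Subsection \ref{subsubsec:box}.
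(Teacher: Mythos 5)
Your part (i) is essentially the paper's argument (choose $j$, then $k$, then $m_2$, then $m_1$, using the grid spacings $h e^{-h/4}\sqrt{a}\le h$ and $he^{-h/2}a\le h$), and that part is fine. The genuine gap is in your reading of (ii)--(iii). The quantities $N_r$ and $\tilde N_r$ in the paper are not counts of points of $X$ lying in $Q_{rh}(x,y,z)$; what the paper actually proves (and what is used later, e.g.\ to cover $Q_{rh}(x,y,z)$ by boxes of the covering in the proof of Proposition \ref{prop:equivalent_D+finite_D-positive} and to get relative separation) is a count of the covering boxes $Q_h(e^{jh},he^{-h/4}k,he^{-h/2}m)$ that \emph{intersect} $Q_{rh}(x,y,z)$. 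The intersection condition $(x,y,z)\cdot(a,s,t)=(e^{jh},he^{-h/4}k,he^{-h/2}m)\cdot(a',s',t')$ with $(a,s,t)\in Q_{rh}$, $(a',s',t')\in Q_h$ forces one to divide by the small-box parameters $\sqrt{a'}\ge e^{-h/4}$ and $a'\ge e^{-h/2}$ and to absorb the shifts $s',t'$; this is exactly where the factors $e^{h/2}$, $e^{h}$, $e^{3h/4}$ (and the widened $j$-range of length $r+1$, hence ``at most $r+2$, at least $r$'') come from. In your direct point count there are no parameters $(a',s',t')$ at all: for a point of $X$ in $Q_{rh}(x,y,z)$ the admissible $j$'s fill an interval of length $r$, the $k$'s an interval of length $re^{h/4}$, and each of $m_1,m_2$ an interval of length $re^{h/2}$, so the number of points of $X$ in $Q_{rh}(x,y,z)$ is of order $r^4e^{5h/4}$, uniformly in $(x,y,z)$.

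Consequently your plan for (ii) does prove the stated upper bound, but only because your count is much smaller than $N_r$; your explanation that the slice sizes ``produce the factors $e^{h/2}$, $e^h$, $e^{3h/4}$'' is not correct under your interpretation (they would be $e^{h/4}$, $e^{h/2}$, $e^{h/2}$). More seriously, your plan for (iii) cannot work: since the number of points of $X$ inside $Q_{rh}(x,y,z)$ is $\asymp r^4e^{5h/4}$, no amount of careful flooring will produce the lower bound $\tilde N_r=r(r+1)^3e^{9h/4}$, which for large $h$ exceeds the actual point count (read literally as a point count, (iii) is not even true; the paper's proof of (iii) explicitly concludes that $Q_{rh}(x,y,z)$ intersects at least $\tilde N_r$ \emph{sets} of the form $Q_h(e^{jh},he^{-h/4}k,he^{-h/2}m)$). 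A smaller point, symptomatic of the same issue: a half-open interval of length $r$ need not contain $r$ integers when $r\notin\ZZ$, so even your ``at least $r$ values of $j$'' needs the length-$(r+1)$ interval coming from the intersection formulation. To repair the argument you should set up (ii) and (iii) exactly as the two-box intersection condition above, as is done in Subsection \ref{subsubsec:box}.
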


This lemma now allows us to derive quite surprising equivalent conditions for
finite upper and positive lower weighted shearlet density. In fact, checking
these conditions can be reduced to checking the behavior of the weighted
discrete set with respect to the boxes $Q_h(x,y,z)$ for {\em one particular} $h > 0$.

\begin{prop} \label{prop:equivalent_D+finite_D-positive}
For $\Lambda \subset \mathbb{S}$ and $w: \Lambda \to \mathbb{R}^{+}$, the following conditions
are equivalent.
\begin{enumerate}
\item[(i)] $\cD^{+}(\Lambda,w) < \infty$.
\item[(ii)] There exists some $h > 0$ such that $\sup_{(x, y, z)\in \mathbb{S}}
\#_{w}(\Lambda \cap Q_{h}(x, y, z)) < \infty.$
\end{enumerate}
Also the following conditions are equivalent.
\begin{enumerate}
\item[(i')] $\cD^{-}(\Lambda,w) >0 $.
\item[(ii')] There exists some $h > 0$ such that $\inf_{(x, y, z)\in \mathbb{S}}
\#_{w}(\Lambda \cap Q_{h}(x, y, z)) > 0.$
\end{enumerate}
\end{prop}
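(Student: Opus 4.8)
The plan is to prove the two equivalences separately, noting in each case that the direction $(i)\Rightarrow(ii)$ (resp.\ $(i')\Rightarrow(ii')$) is immediate from the definition of $\cD^+$ (resp.\ $\cD^-$): if $\cD^+(\Lambda,w)<\infty$, then for all sufficiently large $h$ the quantity $\sup_{(x,y,z)} \#_w(\Lambda\cap Q_h(x,y,z))/h^4$ is bounded, hence $\sup_{(x,y,z)} \#_w(\Lambda\cap Q_h(x,y,z))<\infty$ for such $h$; and symmetrically a positive $\cD^-$ forces $\inf_{(x,y,z)} \#_w(\Lambda\cap Q_h(x,y,z))>0$ for some large $h$. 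So the content is entirely in the converse directions, and the key engine there is Lemma \ref{lem:covering}.

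For $(ii)\Rightarrow(i)$, fix the $h>0$ furnished by $(ii)$, set $C = \sup_{(x,y,z)}\#_w(\Lambda\cap Q_h(x,y,z))<\infty$, and let the comparison set $X$ be the one from Lemma \ref{lem:covering} for this $h$. The plan is: given any box $Q_{rh}(x,y,z)$ with $r\ge 1$, use Lemma \ref{lem:covering}(i) to cover it by the translates $x_i\cdot Q_h$, $x_i\in X$, and Lemma \ref{lem:covering}(ii) to bound by $N_r$ the number of such translates that actually meet $Q_{rh}(x,y,z)$. Then
\[
\#_w(\Lambda\cap Q_{rh}(x,y,z)) \le \sum_{x_i\cap Q_{rh}(x,y,z)\neq\emptyset} \#_w(\Lambda\cap x_i\cdot Q_h) \le N_r\cdot C.
\]
(One has to be a little careful that the translates $x_i\cdot Q_h$ cover $Q_{rh}(x,y,z)$, which follows from Lemma \ref{lem:covering}(i) since they cover all of $\SS$; points of $\Lambda$ lying in the box are then counted, possibly with multiplicity, inside the relevant translates, and since all weights are positive the inequality above is valid.) Dividing by $(rh)^4$ and using the explicit form of $N_r$, one checks that $N_r/(rh)^4 \to (e^{h/2}\cdot e^h\cdot e^{3h/4})/h^4 = e^{9h/4}/h^4$ as $r\to\infty$, a finite constant depending only on the fixed $h$. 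Hence $\limsup_{h'\to\infty}\sup_{(x,y,z)}\#_w(\Lambda\cap Q_{h'}(x,y,z))/h'^4 \le C\,e^{9h/4}/h^4 <\infty$, i.e.\ $\cD^+(\Lambda,w)<\infty$.

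The argument for $(ii')\Rightarrow(i')$ is the mirror image, using part (iii) of Lemma \ref{lem:covering} instead of part (ii): fix the $h$ from $(ii')$, put $c=\inf_{(x,y,z)}\#_w(\Lambda\cap Q_h(x,y,z))>0$, and observe that any $Q_{rh}(x,y,z)$ contains at least $\tilde N_r = r(r+1)^3 e^{9h/4}$ elements of the comparison set $X$, each of which is the center of a box $x_i\cdot Q_h$; arranging these so that a definite fraction of the corresponding $x_i\cdot Q_h$ lie inside $Q_{rh}(x,y,z)$ (shrinking $r$ slightly, or equivalently comparing $Q_{rh}$ with $Q_{(r+2)h}$, to absorb boundary effects), one gets $\#_w(\Lambda\cap Q_{rh}(x,y,z)) \ge (\text{number of interior boxes})\cdot c$, and dividing by $(rh)^4$ and letting $r\to\infty$ yields $\cD^-(\Lambda,w) \ge c\,e^{9h/4}/h^4 > 0$. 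The main obstacle, and the reason Lemma \ref{lem:covering} does the heavy lifting, is precisely this bookkeeping of how many comparison boxes $x_i\cdot Q_h$ can intersect, and how many are fully contained in, a dilated box $Q_{rh}(x,y,z)$ — the sheared, anisotropic geometry of the group action makes the naive volume heuristic ($N_r\approx (rh)^4/h^4$) wrong, with the genuine $e^{9h/4}$-type correction factor appearing; but since that lemma is already in hand, the remaining work here is just the two covering/packing estimates above and a limit computation.
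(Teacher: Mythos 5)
Your handling of the easy directions and of (ii)$\Rightarrow$(i) is correct and is essentially the paper's own argument: cover $Q_{rh}(x,y,z)$ by the boxes $Q_h(e^{jh},he^{-h/4}k,he^{-h/2}m)$, bound the number of boxes meeting $Q_{rh}(x,y,z)$ by $N_r$ via Lemma \ref{lem:covering}(ii), and let $r\to\infty$ to get $\cD^+(\Lambda,w)\le C\,e^{9h/4}/h^4$; your remark that overcounting from overlaps is harmless for an upper bound is exactly right.

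The gap is in (ii')$\Rightarrow$(i'), which the paper only indicates as ``a similar argument''. Your key step, $\#_{w}(\Lambda\cap Q_{rh}(x,y,z))\ge (\mbox{number of interior boxes})\cdot c$, implicitly assumes that the boxes $x_i\cdot Q_h$, $x_i\in X$, are pairwise disjoint. They are not: $X$ is only a covering family, not a tiling. For instance, for fixed $j$ the second coordinate of $x_i\cdot Q_h$ sweeps an interval of length $h$ whose centers are spaced $he^{-h/4}\sqrt{a}<h$ apart (since $a<e^{h/2}$), so neighboring boxes overlap, and similarly in the translation coordinates. A single point of $\Lambda$ sitting in the common overlap of many interior boxes makes each of them satisfy $\#_w(\Lambda\cap x_iQ_h)\ge c$, so summing $c$ over the interior boxes can exceed $\#_w(\Lambda\cap Q_{rh}(x,y,z))$; the inequality as written is false in general. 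The repair is to divide by the overlap multiplicity: by Lemma \ref{lem:covering}(ii) applied with $r=1$, every point of $\SS$ lies in at most $N_1$ of the boxes $x_iQ_h$, whence $\#_w\bigl(\Lambda\cap Q_{r'h}(x,y,z)\bigr)\ge \tilde N_r\,c/N_1$, where $Q_{r'h}(x,y,z)$ is a box large enough to contain every $x_iQ_h$ with $x_i\in Q_{rh}(x,y,z)$. Note also that this enlargement is not the additive ``$Q_{(r+2)h}$'' you suggest: because of the group multiplication, $Q_{rh}\cdot Q_h$ is only contained in a box of size about $h+rh\bigl(e^{h/2}+he^{h/4}\bigr)$ (compare the computation $(p,q,r)Q_hQ_R\subseteq (p,q,r)Q_{R+he^{R/2}+Rhe^{R/4}}$ in Subsection \ref{subsec:comparison}), i.e.\ the size grows by a multiplicative factor depending on $h$. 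Both corrections cost only constants depending on the fixed $h$ (namely $N_1$ and the fourth power of that factor), so after dividing by $(r'h)^4$ and letting $r\to\infty$ one still obtains $\cD^-(\Lambda,w)>0$; but without the bounded-overlap step your lower bound does not stand as stated.
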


\begin{proof}
(i) $\Rightarrow$ (ii). This follows immediately from the definition of $D^{+}(\Lambda,w)$.\\
(ii) $\Rightarrow$ (i). Let $h > 0$ be such that
\[
R := \sup_{(x, y, z)\in \mathbb{S}} \#_{w} (\Lambda \cap Q_{h}(x, y, z))< \infty.
\]
We first distinguish two cases:\\
In the case $0< t < h$, we have
\[
\#_{w} (\Lambda \cap Q_{t}(x, y, z)) \le \#_{w} (\Lambda \cap Q_{h}(x, y, z))\quad \mbox{for all }
(x,y,z) \in \mathbb{S},
\]
and hence
\[
\sup\limits_{(x, y, z)\in \mathbb{S}}\#_{w} (\Lambda \cap Q_{t}(x, y, z))< R.
\]
Now consider the case $t \ge h$, and assume $t = rh$, where $r \ge 1$. By Lemma \ref{lem:covering},
the box $Q_{rh}(x, y, z)$ is covered by a union of at most $N_{r}$ sets of the form $Q_{h}(e^{jh},
he^{-h/4}k, he^{-h/2}m)$. This implies
\[
\sup\limits_{(x, y, z)\in \mathbb{S}}\#_{w}(\Lambda \cap Q_{rh}(x, y, z))
\le N_{r}\cdot \sup\limits_{j,k \in \mathbb{Z},m\in \mathbb{Z}^{2}} \#_{w} (\Lambda \cap Q_{h}(e^{jh}, he^{-h/4}k,he^{-h/2}m))
\le N_{r}\,\cdot R.
\]
Summarizing both cases, we obtain
\begin{eqnarray*}
\cD^{+}(\Lambda)
& \le & \limsup_{r \to \infty} \frac{N_{r}R}{(rh)^{4}} \\
& = & R \cdot \limsup_{r \to \infty}\frac{(r+2)(r+1)^{3}}{(rh)^{4}} \left[e^{h/2} + \frac{1}{r+1}\right]\left[e^{h}+\frac{1}{r+1}\right]
\left[e^{3h/4}+\frac{1}{r+1}\right]\\
& = & R \cdot \lim_{r \to \infty}\frac{(r+2)(r+1)^{3}}{(rh)^{4}} \left[e^{h/2} + \frac{1}{r+1}\right]\left[e^{h}+\frac{1}{r+1}\right]
\left[e^{3h/4}+\frac{1}{r+1}\right]\\
& = & R \cdot \frac{e^{9h/4}}{h^{4}} < \infty.
\end{eqnarray*}

A similar argument shows the equivalence of (i') and (ii').
\end{proof}


\section{A Suitable Class of Shearlet Generators}
\label{sec:B0}

Intuitively, the existence of a lower frame bound for a weighted shearlet system
$\cSH(\psi,\Lambda,w)$ should be closely related to a positive lower weighted
shearlet density, as well as the existence of an upper frame bound should be
closely related to a finite density, both for any generator $\psi \in L^2(\RR^2)$.
It will turn out that this is indeed true
for the upper frame bound. However, for a paralleling result for the lower
frame bound, we have to restrict the class of generators to a class such that
the associated weighted shearlet system satisfies a certain approximation
property (see Section \ref{sec:approximation}). Certainly, we require that
this class contains a `sufficiently' large class of generators, wherefore, in
this section, we introduce this class and study decay properties of the
associated continuous shearlet transforms.

\subsection{Amalgam Spaces on the Shearlet Group}

To impose decay conditions of continuous shearlet transforms, we first introduce
so-called amalgam spaces for the group $\SS$, which amalgamate a particular local
and a particular global behavior, each measured by membership in some $L^p$-space.
A comprehensive general theory of amalgam spaces on locally compact groups was
introduced in \cite{Fei80}. For a more expository introduction to Wiener amalgam
spaces on the real line with extensive references, we refer to \cite{Hei03}.

To measure local behavior of functions on $\SS$, for $h = 1$, we consider the
set of boxes {${\{Q_1(e^{j}, e^{-1/4}k, e^{-1/2}m)\}_{j, k \in \ZZ, m \in \ZZ}}$}
associated with the relatively separated (see Lemma \ref{lem:covering}) set of
center points {${X = \{(e^{j}, e^{-1/4}k, e^{-1/2}m)\}_{ j,k \in \mathbb{Z}, m
\in \mathbb{Z}^{2}}}$}.

Now, for each $1\le p<\infty$, the amalgam spaces $W_{\mathbb{S}}(L^{\infty}, L^{p})$
on the shearlet group $\mathbb{S}$ can be defined as mixed-norm spaces consisting of
functions $f: \mathbb{S} \to \mathbb{C}$ by the following

\begin{definition}
For $1\le p<\infty$, the \textit{amalgam space on the shearlet group} $\mathbb{S}$ is
defined by
\[
W_{\mathbb{S}}(L^{\infty}, L^{p}) = \left\{ f\in L^{p}(\mathbb{S}): \|f\|_{W_{\mathbb{S}}(L^{\infty}, L^{p})}
< \infty \right\},
\]
where the norm is given by
\[
\|f\|_{W_{\mathbb{S}}(L^{\infty}, L^{p})}
:= \left(\sum_{j,k\in\Z}\sum_{m \in \Z^{2}} \|f \cdot \chi_{Q_1(e^{j}, e^{-1/4}k, e^{-1/2}m)}\|_{\infty}^{p}\right)^{1/p}.
\]
\end{definition}


\subsection{The Space $\mathcal{B}_0$.}
We now introduce a class of shearlet generators, whose associated continuous shearlet transforms
belong to the amalgam space $W_{\mathbb{S}}(C, L^{1})$ ($= W_{\mathbb{S}}(L^{\infty}, L^{1}) \cap C(\mathbb{S})$, where
$C(\mathbb{S})$ is a set of continuous functions on $\mathbb{S}$). Intuitively, the generators should
have sufficient decay in both space and frequency, which is formalized in the next definition.
Notice that here we draw from ideas presented in \cite{HK07}.

\begin{definition} \label{defi_B0}
Let $\mathcal{B}_{0}$ denote the space of Schwartz functions which
satisfy
\begin{itemize}
 \item[(i)] $|\psi(x)| \le \frac{C}{(1+\|x\|_{\infty}^{2})^{\alpha}}$, \hspace{0.5cm}
 where $C>0$ and $\alpha > \frac{3}{2}$,
 \item[(ii)] $\supp\, \hat{\psi} \wq{\subset} \{[-a_{1}, -a_{0}] \cup [a_{0}, a_{1}]\}\times
 [-b,b]$, $0< a_{0}< a_{1}$ and $b>0$ with
 \[ |\hat{\psi}(\xi)| \le \frac{\xi_{1}^{2\beta}}{(1+\|\xi\|_{\infty}^{2})^{2\beta}}, \qquad
 \textrm{where}\; \;\; \beta > 4\alpha +\gk{2}.\]
\end{itemize}
\end{definition}

The following two lemmata indicate the effect the decay conditions (i) and (ii) imposed on a shearlet
generator have on the decay of the associated continuous shearlet transform. These will
be essential ingredients for proving membership of the associated continuous shearlet transform
to $W_{\mathbb{S}}(C, L^{1})$.

\begin{lem}\label{lem:decay1}
Let $f, \psi \in L^2(\RR^2)$ satisfy Definition \ref{defi_B0}(i). Then
\[
|\mathcal{SH}_{\psi}f(a,s,t)| \le Ca^{3/4}
\frac{\max\{1,d^{2}\}}{\left[1+\left\| \frac{A_{a}^{-1}S_{s}^{-1}t}{\max\{1,d\}}\right\|_{\infty}^{2}\right]^{\alpha - 1/2}},
\quad \mbox{for all }(a,s,t) \in \mathbb{S},
\]
where $d^{2} = \gk{(2+\frac{s^2}{a}) \cdot} \max \left\{ \frac{1}{a^{2}}, \frac{1}{a}\right\}$.
\end{lem}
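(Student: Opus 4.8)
The plan is to estimate the inner product $\mathcal{SH}_\psi f(a,s,t) = \ip{f}{a^{3/4}\psi(A_a^{-1}S_s^{-1}(\cdot - t))}$ directly, exploiting that both $f$ and $\psi$ satisfy the polynomial decay bound of Definition \ref{defi_B0}(i). Writing out the integral, $|\mathcal{SH}_\psi f(a,s,t)| \le a^{3/4}\int_{\RR^2} |f(x)| \cdot |\psi(A_a^{-1}S_s^{-1}(x-t))|\,dx$, and substituting the decay bounds for $f$ and $\psi$, I am reduced to controlling the convolution-type integral
\[
\int_{\RR^2} \frac{C}{(1+\|x\|_\infty^2)^\alpha} \cdot \frac{C}{(1+\|A_a^{-1}S_s^{-1}(x-t)\|_\infty^2)^\alpha}\, dx.
\]
The first step is therefore a change of variables to normalize the second factor: set $y = A_a^{-1}S_s^{-1}(x-t)$, so $x = S_s A_a y + t$ and $dx = a^{3/2}\,dy$ (since $\det(S_s A_a) = a\cdot\sqrt a = a^{3/2}$). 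This turns the integral into $a^{3/2}\int (1+\|S_sA_a y + t\|_\infty^2)^{-\alpha}(1+\|y\|_\infty^2)^{-\alpha}\,dy$.

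The main technical step is a Peetre-type inequality to decouple the shift and the anisotropic linear map inside the first factor. The operator norm of $S_s A_a$ in $\|\cdot\|_\infty$ is comparable to $\max\{a, \sqrt a\} \cdot (1 + |s|/\sqrt a)$ up to constants — more precisely $\|S_s A_a y\|_\infty \le (a + |s|\sqrt a)|y_1|_{\text{comp}} + \sqrt a |y_2|_{\text{comp}}$, which is where the quantity $d$ with $d^2 = (2 + s^2/a)\max\{1/a^2, 1/a\}$ enters: one checks that $\|A_a^{-1}S_s^{-1}\|_\infty^2 \lesssim d^2$, equivalently $\|S_s A_a\|_\infty \lesssim \max\{1,d\}\cdot(\text{something absorbed into } a^{3/2})$. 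Using $1 + \|u+v\|_\infty^2 \ge \tfrac{c}{1+\|v\|_\infty^2}(1+\|u\|_\infty^2)$ — the standard "Peetre" or "mismatch" inequality — with $u = A_a^{-1}S_s^{-1}t$ rescaled and $v$ the image of $y$, I can pull the factor $\big(1+\|A_a^{-1}S_s^{-1}t/\max\{1,d\}\|_\infty^2\big)^{-(\alpha)}$ out of the integral, losing a power (this accounts for the exponent dropping from $\alpha$ to $\alpha - 1/2$, with the remaining $1/2$-power worth of decay in $y$ plus the two dimensions consumed by integrability — note $\alpha > 3/2$ is exactly what makes $\int (1+\|y\|_\infty^2)^{-(\alpha - 1/2)}\,dy < \infty$ in $\RR^2$). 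The leftover $y$-integral is then a finite constant, and bookkeeping the powers of $\max\{1,d\}$ that come from rescaling the argument $A_a^{-1}S_s^{-1}t$ produces the factor $\max\{1,d^2\}$ in front.

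The hardest part will be the careful matching of the anisotropic operator norm of $S_s A_a$ with the scalar $d$: one must verify that $d$ as defined dominates $\|A_a^{-1}S_s^{-1}\|_\infty$ (so that dividing the argument by $\max\{1,d\}$ genuinely makes the linear map a contraction, allowing the Peetre inequality to be applied cleanly in a rotation-invariant-enough way), while simultaneously not over-counting powers — the appearance of $2 + s^2/a$ rather than $1 + s^2/a$ is a telltale sign that a crude bound like $\|(u_1,u_2)\|_\infty^2 \le u_1^2 + u_2^2$ or $\|S_s^{-1} w\|^2 \le (1+s^2)\|w\|^2$ is used, and I would need to track constants just precisely enough to land on the stated form. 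Everything else — the substitution, splitting $\max\{1/a^2,1/a\}$ according to whether $a \le 1$ or $a > 1$, and the final integrability estimate — is routine once the norm comparison is pinned down.
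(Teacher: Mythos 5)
Your route is genuinely different from the paper's: the paper disposes of this lemma in one line, quoting \cite[Lem.~4.5]{DKST09} and transporting it to $\cSH_\psi$ via the isomorphism relation of Lemma~\ref{lem:relation_SH_tildeSH}, whereas you propose to reprove the estimate from scratch out of Definition~\ref{defi_B0}(i). A direct proof along these lines is possible, but as sketched it has two concrete gaps. First, you unfold the wrong transform: in this paper $(\sigma(a,s,t)\psi)(x)=a^{3/4}\psi(S_sA_ax-t)$, while the expression $a^{3/4}\psi(A_a^{-1}S_s^{-1}(x-t))$ you start from is the DKST09-style dilation underlying $\widetilde{\cSH}$, related to $\cSH$ only through $\Phi$. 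This is not cosmetic: with the paper's definition the relevant Jacobian is $a^{-3/2}$, not $a^{3/2}$, and the analyzing window $x\mapsto\psi(S_sA_ax-t)$ is a bump centered at $x_0=A_a^{-1}S_s^{-1}t$, of width scale $\|A_a^{-1}S_s^{-1}\|_\infty\lesssim\max\{1,d\}$ and of mass $\sim a^{-3/2}$, and it is exactly this mass that the prefactor $\max\{1,d^2\}$ must absorb (one checks $a^{-3/2}\le\max\{1,d^2\}$ for all $a>0$). Your bookkeeping, with prefactor $a^{3/4}\cdot a^{3/2}$, does not reproduce the stated powers of $a$.

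Second, and more seriously, the Peetre decoupling you designate as the main technical step cannot produce the exponent $\alpha-\tfrac12$. If you extract $(1+\|x_0/\max\{1,d\}\|_\infty^2)^{-\theta}$ from the shifted factor via $1+\|u+v\|^2\gtrsim(1+\|u\|^2)/(1+\|v\|^2)$, you pay a factor $(1+\|y\|_\infty^2)^{+\theta}$ inside the integral, so the remaining integrand decays only like $(1+\|y\|_\infty^2)^{-(\alpha-\theta)}$, and integrability over $\RR^2$ forces $\theta<\alpha-1$, strictly short of $\alpha-\tfrac12$. Your own accounting reflects this confusion: with $\theta=\alpha-\tfrac12$ the leftover decay is $(1+\|y\|_\infty^2)^{-1/2}$, which is not integrable, rather than the $(1+\|y\|_\infty^2)^{-(\alpha-1/2)}$ for which you invoke $\alpha>\tfrac32$. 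The standard repair is not Peetre but the near/far splitting of the $x$-integral about $x_0$: on $\{\|x-x_0\|_\infty\ge\|x_0\|_\infty/2\}$ use the window's decay at scale $\max\{1,d\}$ against the $O(1)$ mass of the $f$-bump, and on the complement use $f$'s decay against the window's mass $a^{-3/2}\le\max\{1,d^2\}$; this yields the bound even with exponent $\alpha$, hence a fortiori $\alpha-\tfrac12$. The one ingredient you flag as hardest --- that $d$ dominates $\|A_a^{-1}S_s^{-1}\|_\infty$ --- is indeed correct and unproblematic; it is the decoupling mechanism and the choice of dilation that need to be fixed, or else one simply cites \cite{DKST09} as the paper does.
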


\begin{proof}
This result follows from \cite[Lem. 4.5]{DKST09} and the relation between $\mathcal{SH}_{\psi}$ and
$\widetilde{\mathcal{SH}}_{\psi}$ given by Lemma \ref{lem:relation_SH_tildeSH}.
\end{proof}

\begin{lem}\label{lem:decay2}
Let $f, \psi \in L^2(\RR^2)$ satisfy Definition \ref{defi_B0}(ii). Then
\[
\left|\mathcal{SH}_{\psi}f(a,s,t)\right| \le C a^{3/4} \frac{a^{3\beta/2}}{(1+a^{2})^{\beta}(\sqrt{a}+|s|)^{\beta}},
\quad \mbox{for all }(a,s,t) \in \mathbb{S}.
\]
\end{lem}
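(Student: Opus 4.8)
The statement to be proved is Lemma \ref{lem:decay2}: if $f,\psi\in L^2(\RR^2)$ satisfy Definition \ref{defi_B0}(ii), then
\[
\left|\mathcal{SH}_{\psi}f(a,s,t)\right| \le C a^{3/4} \frac{a^{3\beta/2}}{(1+a^{2})^{\beta}(\sqrt{a}+|s|)^{\beta}},
\quad (a,s,t)\in\mathbb{S}.
\]
The plan is to pass to the frequency side, since Definition \ref{defi_B0}(ii) is a statement about $\hat\psi$. By Plancherel and the definition of $\sigma$, one has
\[
\mathcal{SH}_{\psi}f(a,s,t)=\langle f,\sigma(a,s,t)\psi\rangle
= a^{-3/4}\int_{\RR^2}\hat f(\xi)\,\overline{\hat\psi(A_a S_s^{-T}\xi)}\,e^{2\pi i\langle \xi, \cdot\rangle}\,d\xi
\]
up to the standard change of variables that turns dilation/shearing in space into the transposed action in frequency; tracking the Jacobian gives the outer $a^{3/4}$ factor (the representation $\sigma$ is unitary, so the net power of $a$ from Plancherel plus the $a^{3/4}$ in $\sigma$ produces the $a^{3/4}$ displayed). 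I would simply quote the corresponding computation already present in \cite{DKST09} (analogous to the way Lemma \ref{lem:decay1} is derived from \cite[Lem. 4.5]{DKST09}), transported via Lemma \ref{lem:relation_SH_tildeSH}. The upshot is that $|\mathcal{SH}_{\psi}f(a,s,t)|\le Ca^{3/4}\sup_{\xi\in\supp\hat\psi}|\hat\psi(A_aS_s^{-T}\xi)|$ times an $L^1$-type constant from $\hat f$, or more robustly a bound by $Ca^{3/4}\|\hat\psi(A_aS_s^{-T}\cdot)\|_\infty$ on the relevant support.

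The core of the argument is then purely a calculus estimate: bounding $|\hat\psi(A_aS_s^{-T}\xi)|$ using the envelope in Definition \ref{defi_B0}(ii). Write $\eta=A_aS_s^{-T}\xi$; since $S_s^{-T}=S_{-s}^{T}$ has the form $\begin{pmatrix}1&0\\-s&1\end{pmatrix}$ acting on $\xi=(\xi_1,\xi_2)$ and $A_a=\mathrm{diag}(a,\sqrt a)$, we get $\eta_1=a\xi_1$ and $\eta_2=\sqrt a(\xi_2-s\xi_1)$. The support condition forces $|\xi_1|\in[a_0,a_1]$, hence $|\eta_1|=a|\xi_1|\asymp a$ (comparable with constants depending only on $a_0,a_1$). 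Then the bound $|\hat\psi(\eta)|\le \eta_1^{2\beta}/(1+\|\eta\|_\infty^2)^{2\beta}=\big(\eta_1^2/(1+\|\eta\|_\infty^2)\big)^{\beta}\cdot(1+\|\eta\|_\infty^2)^{-\beta}$ is the thing to exploit. For the first factor, $\eta_1^2\le \|\eta\|_\infty^2\le 1+\|\eta\|_\infty^2$, so $\eta_1^2/(1+\|\eta\|_\infty^2)\le 1$; but we also want to extract the $a$-decay, so instead estimate $\eta_1^2/(1+\|\eta\|_\infty^2)\le \eta_1^2 = a^2\xi_1^2\le a^2 a_1^2$ — wait, that grows. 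The right move is: keep one copy $\eta_1^{2\beta}=a^{2\beta}\xi_1^{2\beta}\le a^{2\beta}a_1^{2\beta}$ in the numerator to produce $a^{3\beta/2}$ only after combining; more precisely one balances powers. Let me restate: write $|\hat\psi(\eta)|\le (1+\|\eta\|_\infty^2)^{-\beta}$ crudely (using $\eta_1^{2\beta}\le(1+\|\eta\|_\infty^2)^{\beta}$), and then bound $1+\|\eta\|_\infty^2\ge \max\{1,\eta_1^2,\eta_2^2\}\gtrsim \max\{1,a^2\}\gtrsim \tfrac12(1+a^2)$ from the $\eta_1$ component, giving an $(1+a^2)^{-\beta}$-type factor; separately, using the $\eta_2$ component, $1+\|\eta\|_\infty^2\gtrsim 1+a(\xi_2-s\xi_1)^2$, and one must convert this into a $(\sqrt a+|s|)^{-\beta}$ factor. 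That last conversion is the delicate point.

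\textbf{The main obstacle.} The hard part is producing the precise factor $a^{3\beta/2}/\big((1+a^2)^\beta(\sqrt a+|s|)^\beta\big)$ rather than something merely comparable, and in particular getting the $(\sqrt a+|s|)^{-\beta}$ decay in the shear variable out of the envelope. The subtlety is that $\eta_2=\sqrt a(\xi_2-s\xi_1)$ and $\xi_2$ ranges over $[-b,b]$ while $|\xi_1|\ge a_0>0$; for $|s|$ large compared to $b/a_0$ the term $s\xi_1$ dominates, so $|\eta_2|\gtrsim \sqrt a\,|s|\,a_0 - \sqrt a\, b \gtrsim \sqrt a|s|$, whence $1+\|\eta\|_\infty^2\gtrsim a s^2$. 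For $|s|$ small, one instead needs the $\sqrt a$-piece of $(\sqrt a+|s|)$, which comes from $|\eta_1|\asymp a$ together with a careful distribution of the available powers of $(1+\|\eta\|_\infty^2)^{-\beta}$ between a piece controlling $a$ and a piece controlling $\sqrt a+|s|$ — this is why one needs the full exponent $2\beta$ in the support envelope, splitting it so that, say, $\beta$ copies give $(1+a^2)^{-\beta}$-type decay and the extra numerator power $\eta_1^{2\beta}=a^{2\beta}\xi_1^{2\beta}$ is reorganized as $a^{3\beta/2}\cdot a^{\beta/2}$ with the $a^{\beta/2}$ absorbed against $(\sqrt a)^{-\beta}$ coming from the $\eta_2$ estimate. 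The bookkeeping is routine once the case split $|s|\lesssim \sqrt a$ versus $|s|\gtrsim\sqrt a$ (equivalently comparing $\sqrt a$ and $|s|$ inside $\sqrt a+|s|$) is in place, but getting every power to land exactly requires care; I would organize it as: (1) reduce to estimating $\sup|\hat\psi(A_aS_s^{-T}\xi)|$ over the support; (2) compute $\eta_1,\eta_2$; (3) lower-bound $1+\|\eta\|_\infty^2$ by a product-type quantity $\gtrsim (1+a^2)(\sqrt a + |s|)^2 / (\text{something} \cdot a^{?})$ using the two case splits, while carrying along the numerator $\eta_1^{2\beta}\asymp a^{2\beta}$; (4) combine and verify the exponents match $3\beta/2$, $\beta$, $\beta$. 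The constant absorbs $a_0,a_1,b,\beta$ and $\|\hat f\|$ (or an $L^1$ norm of $\hat f$ on the compactly-supported-in-$\xi_1$ region, finite because $\hat\psi$ is compactly supported and $\psi$ Schwartz).
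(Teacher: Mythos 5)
The paper proves this lemma in one line, by citing \cite[Lem.~4.6]{DKST09} and transporting it through the isomorphism of Lemma \ref{lem:relation_SH_tildeSH}; you instead attempt the underlying frequency-side computation yourself, which would be a legitimate alternative, but as written it has two genuine problems. First, your setup uses the wrong group convention: under the paper's representation $\sigma(a,s,t)\psi = a^{3/4}\psi(S_sA_a\cdot\,-t)$ one has $\widehat{\sigma(a,s,t)\psi}(\xi) = a^{-3/4}e^{-2\pi i\langle (S_sA_a)^{-1}t,\,\xi\rangle}\,\hat\psi\bigl((S_sA_a)^{-T}\xi\bigr)$ with $(S_sA_a)^{-T}\xi = \bigl(\tfrac{\xi_1}{a},\ \tfrac{\xi_2}{\sqrt a}-\tfrac{s\xi_1}{a}\bigr)$, not $\eta = A_aS_s^{-T}\xi=(a\xi_1,\sqrt a(\xi_2-s\xi_1))$ as you claim. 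Your $\eta$ is the frequency variable of the \emph{other} transform $\widetilde{\cSH}_\psi$ from \cite{DKMSST08,DKST09}; running your estimate with it yields the DKST-form bound in the tilde parameters, and since the claimed bound $a^{3/4}a^{3\beta/2}(1+a^2)^{-\beta}(\sqrt a+|s|)^{-\beta}$ is \emph{not} invariant under the substitution $(a,s)\mapsto(a^{-1},-s/\sqrt a)$ induced by $\Phi$, the exponents you would obtain do not match the statement unless you explicitly perform that transport (which is exactly the content of Lemma \ref{lem:relation_SH_tildeSH} that the paper invokes). So the convention is not a cosmetic point here; it changes the powers of $a$.

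Second, even granting the correct $\eta$, the proposal stops short of a proof precisely at the step you yourself identify as the main obstacle: the distribution of the $2\beta$ powers of $(1+\|\eta\|_\infty^2)^{-1}$ and the numerator $\eta_1^{2\beta}$ between the $(1+a^2)^{-\beta}$ factor and the $(\sqrt a+|s|)^{-\beta}$ factor, with the case split $|s|\lesssim\sqrt a$ versus $|s|\gtrsim\sqrt a$, is only described as a to-do list (your steps (3)--(4)), and the sketch visibly stalls mid-computation (``wait, that grows''). This bookkeeping is exactly where the stated exponents $3\beta/2$, $\beta$, $\beta$ are produced, so leaving it unexecuted leaves the lemma unproved. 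To repair the argument, either carry out the estimate with the correct argument $(\xi_1/a,\ \xi_2/\sqrt a - s\xi_1/a)$ (noting the Plancherel prefactor is $a^{-3/4}$, not $a^{3/4}$), or do what the paper does: quote \cite[Lem.~4.6]{DKST09} verbatim and convert the variables via Lemma \ref{lem:relation_SH_tildeSH}.
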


\begin{proof}
This result follows from \cite[Lem. 4.6]{DKST09} and the relation between $\mathcal{SH}_{\psi}$ and
$\widetilde{\mathcal{SH}}_{\psi}$ given by Lemma \ref{lem:relation_SH_tildeSH}.
\end{proof}

The next theorem now proves what was claimed before. We present the lengthy, very technical proof
in Subsection \ref{subsubsec:b0}.

\begin{theorem}\label{theo:b0}
If $f, \psi \in \mathcal{B}_{0}$, then $\mathcal{SH}_{\psi}f \in W_{\mathbb{S}}(C,L^{1})$.
\end{theorem}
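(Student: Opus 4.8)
The goal is to show that $\|\mathcal{SH}_{\psi}f\|_{W_{\mathbb{S}}(L^{\infty},L^1)} < \infty$ and that $\mathcal{SH}_{\psi}f$ is continuous; continuity is standard (it follows from $\psi, f \in L^2$ together with strong continuity of the representation $\sigma$, so I would only remark on it). The heart of the matter is the amalgam norm, which by definition is
\[
\sum_{j,k\in\Z}\sum_{m\in\Z^{2}} \bigl\| \mathcal{SH}_{\psi}f \cdot \chi_{Q_1(e^{j}, e^{-1/4}k, e^{-1/2}m)} \bigr\|_{\infty}.
\]
The plan is to bound the local sup-norm over each box $Q_1(e^{j}, e^{-1/4}k, e^{-1/2}m)$ by a pointwise estimate on $|\mathcal{SH}_{\psi}f(a,s,t)|$ valid at the ``center'' scale $a \sim e^{j}$, shear $s \sim e^{-1/4}k$, translation $t \sim e^{-1/2}m$ (up to the controlled morphing of $Q_1$ under the group action, which is handled via Lemma \ref{lem:covering}), and then to sum the resulting three-dimensional geometric/polynomial series in $j$, $k$, $m$.

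The key step is to combine the two decay lemmata. Lemma \ref{lem:decay2} gives, for the scale--shear part, a bound behaving like $a^{3/4} \cdot a^{3\beta/2}(1+a^{2})^{-\beta}(\sqrt{a}+|s|)^{-\beta}$: for $a$ large ($j\to+\infty$) this decays like $a^{3/4-\beta/2}$ after absorbing the shear factor crudely, and for $a$ small ($j\to-\infty$) it decays like $a^{3/4+3\beta/2}$ — both summable in $j$ once $\beta$ is large, and the $(\sqrt a + |s|)^{-\beta}$ factor controls the sum over $k$ since $|s| \sim e^{-1/4}|k|$ and $\beta > 1$. Lemma \ref{lem:decay1} supplies the decay in the translation variable: $|\mathcal{SH}_{\psi}f(a,s,t)| \lesssim a^{3/4} \max\{1,d^{2}\}\,\bigl[1+\|A_a^{-1}S_s^{-1}t/\max\{1,d\}\|_{\infty}^{2}\bigr]^{-(\alpha-1/2)}$. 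Summing the $\chi_{Q_1}$-localized sup over $m\in\Z^2$ amounts to comparing a sum to an integral $\int_{\RR^2}\bigl[1+\|A_a^{-1}S_s^{-1}t/\max\{1,d\}\|_\infty^2\bigr]^{-(\alpha-1/2)}\,dt$; the change of variables $u = A_a^{-1}S_s^{-1}t$ has Jacobian $|\det A_a S_s| = a^{3/2}$, and the rescaling by $\max\{1,d\}$ contributes $\max\{1,d\}^2$, so this integral is $\lesssim a^{3/2}\max\{1,d^2\} \cdot \int_{\RR^2}(1+\|u\|_\infty^2)^{-(\alpha-1/2)}du$, finite precisely because $\alpha > 3/2$. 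Multiplying the two estimates (one may use $\min$ of the two bounds, or a geometric-mean interpolation, but the cleanest route is: use Lemma \ref{lem:decay1} to do the $m$-sum, then use Lemma \ref{lem:decay2} — re-derived for the same generator but keeping only the frequency-support decay — to control the residual $a$- and $s$-dependence) leaves a scalar series in $(j,k)$ of the shape $\sum_j \sum_k a^{3/4}\cdot a^{3/2}\max\{1,d^2\}^2 \cdot a^{3\beta/2}(1+a^2)^{-\beta}(\sqrt a+|s|)^{-\beta}$ with $a = e^j$, $s = e^{-1/4}k$; bookkeeping of the powers shows that the exponent of $a$ is strictly negative for $j\to-\infty$ (using $\beta > 4\alpha+2$, which dominates the $\max\{1,d^2\}^2 \sim a^{-4}$ blow-up as $a\to 0$) and strictly positive-dominated-by-decay for $j\to+\infty$, and the $k$-sum converges since $\beta > 1$.

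The main obstacle is the $a\to 0$ regime, where the factor $d^{2} = (2+s^{2}/a)\max\{a^{-2},a^{-1}\}$ coming from Lemma \ref{lem:decay1} blows up, and where the shear variable $s$ and scale $a$ are coupled inside $d$. One must track carefully how $\max\{1,d^2\}$ interacts with the $(\sqrt a + |s|)^{-\beta}$ decay: for $a$ small, $d^2 \sim (2+s^2/a)/a^2$, so $\max\{1,d^2\}^2 \sim a^{-4}(2+s^2/a)^2$, and the $s^2/a$ piece must be beaten down by $(\sqrt a+|s|)^{-\beta} \le |s|^{-\beta}$ together with two powers spent on $|s|^4/a^2$; this is exactly where the hypothesis $\beta > 4\alpha + 2$ (and $\alpha > 3/2$) is consumed — I would verify that, after the $m$-sum has been performed and all powers of $a$ collected, the exponent of $e^{j}$ is bounded above by a negative constant as $j\to -\infty$ and that the joint $(j,k)$ series still converges. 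The region $a\to\infty$ is comparatively benign since then $d^2 \le 2+s^2/a \lesssim 1 + |s|^2/a$ grows only mildly and the $(1+a^2)^{-\beta}$ factor gives room to spare. Organizing the estimate by first splitting $\Z = \{j \ge 0\}\cup\{j<0\}$, handling the two halves separately, and in each half summing over $m$, then $k$, then $j$, keeps the combinatorics manageable; the morphing of $Q_1$ under left translation only perturbs the relevant parameters by bounded multiplicative/additive constants (quantified by Lemma \ref{lem:covering}), so it changes none of the convergence exponents.
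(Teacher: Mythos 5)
Your overall strategy coincides with the paper's: reduce the claim to finiteness of the amalgam sum $\sum_{j,k,m}\|\mathcal{SH}_{\psi}f\cdot\chi_{Q_1(e^{j},e^{-1/4}k,e^{-1/2}m)}\|_{\infty}$, combine the two decay estimates of Lemmas \ref{lem:decay1} and \ref{lem:decay2}, split according to the sign of $j$, and recognize the small-scale regime, where $d^{2}=(2+s^{2}/a)\max\{a^{-2},a^{-1}\}$ blows up, as the place where $\beta>4\alpha+2$ is spent. Whether the $m$-sum is done by a sum-to-integral comparison (your route) or by the paper's direct lattice count (using that a point of the box indexed by $m$ with $\|m\|_{\infty}>\tfrac{3e}{2}$ has $\|t\|_{\infty}\ge\|m\|_{\infty}/(3e)$, with the finitely many small $m$ treated separately via Lemma \ref{lem:decay2} alone) is largely cosmetic.

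The genuine gap is in how you combine the two lemmas. Your preferred route -- use the full bound of Lemma \ref{lem:decay1} to perform the $m$-sum and then multiply by the bound of Lemma \ref{lem:decay2} to control the residual dependence on $(a,s)$ -- is not legitimate: both lemmas bound the \emph{same} quantity $|\mathcal{SH}_{\psi}f(a,s,t)|$, so their product bounds $|\mathcal{SH}_{\psi}f|^{2}$, not $|\mathcal{SH}_{\psi}f|$, and the $(j,k)$-series you write down is therefore not an upper bound for the amalgam norm. The admissible combination is the minimum, or equivalently the geometric mean, which is exactly what the paper uses: it bounds $|\mathcal{SH}_{\psi}f|^{2}$ by the product, see \eqref{eq:sh2}, and takes square roots only at the end. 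But under the geometric mean every decay exponent is halved: the translation decay becomes $\|m\|_{\infty}^{-(\alpha-1/2)}$ rather than $\|m\|_{\infty}^{-(2\alpha-1)}$, so your assertion that the $t$-integral converges ``precisely because $\alpha>3/2$'' no longer applies to the bound you are actually entitled to use, and the shear decay becomes $(\sqrt{e}+2|k|)^{-(\beta-4\alpha)/2}$, which is where $\beta>4\alpha+2$ truly enters. The exponent bookkeeping you explicitly defer (``I would verify that \dots the exponent of $e^{j}$ is negative'') is precisely the substance of the proof; the paper carries it out in four cases $T_{1}$--$T_{4}$ ($j\le 0$ versus $j\ge 1$, $\|m\|_{\infty}$ large versus small, and within $T_{2}$ the subcases $d\ge 1$ and $d<1$). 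One could try an unequal interpolation $\mathrm{bd}_{1}^{\theta}\mathrm{bd}_{2}^{1-\theta}$ with $\theta$ near $1$ to rescue your $m$-sum, but then only a $(1-\theta)$-fraction of the decay from Lemma \ref{lem:decay2} remains to absorb $\max\{1,d^{2}\}^{\theta}$ as $a\to 0$, which demands a stronger hypothesis on $\beta$ than the one assumed. A secondary slip: when comparing $\sum_{m}$ of local suprema with an integral in $t$ you must divide by the area of the $t$-section of the morphed box, which is of order $a^{3/2}$ and cancels the Jacobian $|\det A_{a}S_{s}|=a^{3/2}$ you introduce, so your displayed series also carries a spurious factor $a^{3/2}$.
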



\section{Approximation Properties of Weighted Shearlet Frames}
\label{sec:approximation}

The aim of this section is two-fold: (1) approximation properties of shearlet
frames shall be formalized and studied, and (2) a particular approximation property
shall be introduced to serve as a hypothesis under which we can establish a connection
between the existence of a lower frame bound for a shearlet system $\cSH(\psi,\Lambda)$
and a positive lower shearlet density.

We should mention that the to be introduced so-called Homogeneous Approximation Property (HAP)
was already extensively studied in the case of Gabor systems (for a survey, see \cite{Hei07}),
in the wavelet case (see \cite{HK07}), and also for general coherent frame (see \cite{Gro08}).
However, also the results for general coherent frames do not cover the results we
require, since our aim is to establish a link to conditions on shearlet density,
whereas the density aspect is not the focus of \cite{Gro08}. Although many proofs still
follow the basic line of argumentation in those papers -- however much more technical due
to the 4D-parameter space of shearlet -- we include all proofs for the convenience of the
reader sufficiently detailed. To prove the HAP for shearlet frames provided that the
generator belongs to $\cB_0$ might intuitively not be that surprising, however, to actually prove the
results requires hard technical work as can be seen below.

\subsection{The Strong and Weak Homogeneous Approximation Property}

The approximation property we will investigate analyzes the ability of a shearlet
frame to homogeneously approximate functions. More precisely, if a function can be approximated
by a finite collection of elements from a shearlet frame defined by a selection of
a finite set of parameters, then the via the action of $\SS$ translated function can
be approximated with the same accuracy by the finite collection of shearlet elements
associated with the similarly translated finite set of parameters. The following
definition makes these rough ideas precise.

\begin{definition} \label{defi:HAP}
Let $\psi \in L^{2}(\R^{2})$ and $\Lambda \subset \mathbb{S}$ be such that
$\mathcal{SH}(\psi,\Lambda)$ is a shearlet frame for $L^{2}(\R^{2})$, and
let $\{ \widetilde{\sigma(a,s,t)\psi}:(a,s,t) \in \Lambda\}$ denote its dual
frame. For each $h>0$ and $(p,q,r) \in \mathbb{S}$, define
\[
W(h,(p,q,r)) = \spann\{\widetilde{\sigma(a,s,t)\psi}:(a,s,t) \in Q_{h}(p,q,r) \cap \Lambda\}.
\]
\begin{itemize}
\item[(a)] The frame $\mathcal{SH}(\psi,\Lambda)$ is said to possess the \emph{Weak HAP},
if for each $f \in L^{2}(\R^{2})$ and for all $\epsilon >0$ there exists some $R = R(f,\epsilon) > 0$
such that for all $(p,q,r) \in \mathbb{S}$,
\[
\mbox{\rm dist}\left(\sigma(p,q,r)f, W(R,(p,q,r))\right) < \epsilon.
\]
\item[(b)] The frame $\mathcal{SH}(\psi,\Lambda)$ is said to possess the \emph{Strong HAP},
if for each $f \in L^{2}(\R^{2})$ and for all $\epsilon >0$ there exists some $R = R(f,\epsilon) > 0$
such that for all $(p,q,r) \in \mathbb{S}$,
\[
\Big\| \sigma(p,q,r)f \;\; - \sum_{(a,s,t)\in Q_{R}(p,q,r)\cap\Lambda} \langle \sigma(p,q,r)f,
\sigma(a,s,t)\psi \rangle \widetilde{\sigma(a,s,t)\psi}\Big\|_{2} < \epsilon.
\]
\end{itemize}
\end{definition}

The main result showing that the selected class of shearlet generators $\cB_0$ forces
the associated shearlet system to even satisfy the Strong HAP is our next goal. For
this, we require the following technical lemma, whose proof is presented in
Subsection \ref{subsubsec:lemmahap}.

\begin{lem}\label{lem:hap}
Let $\psi, g \in \mathcal{B}_{0}$ and $\Lambda \subset \mathbb{S}$ such that $ \mathcal{SH}(\psi,\Lambda)$
is a frame for $L^{2}(\R^{2})$. For any $\epsilon, \delta > 0$, there exists some $R=R(g,\epsilon) > 0$ such
that, for each $(p,q,r) \in \mathbb{S}$,
\[
\sum_{(x,y,z) \in (p,q,r)^{-1}\Lambda \setminus Q_{R}} \left| \mathcal{SH}_{\psi}g(x,y,z)\right|^{2} \le \epsilon.
\]
\end{lem}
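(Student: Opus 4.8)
The plan is to reduce the claim to the already-established membership $\mathcal{SH}_\psi g \in W_{\mathbb{S}}(C,L^1)$ from Theorem \ref{theo:b0}, together with the covering and separation estimates of Lemma \ref{lem:covering}. First I would observe that by the left-invariance of the Haar measure and the covariance of the shearlet transform, $\mathcal{SH}_\psi g((p,q,r)^{-1} \cdot (a,s,t))$ is, up to the usual unimodular normalization, $\mathcal{SH}_\psi(\sigma(p,q,r)g)(a,s,t)$; but since only the \emph{modulus} $|\mathcal{SH}_\psi g(x,y,z)|$ appears and the sum runs over the shifted index set $(p,q,r)^{-1}\Lambda$, the essential point is simply to control the tail of the $W_{\mathbb{S}}(L^\infty,L^1)$-norm of the fixed function $F := \mathcal{SH}_\psi g$ uniformly in the basepoint $(p,q,r)$. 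Concretely, I would cover $\mathbb{S} \setminus Q_R$ by the boxes $Q_1(e^j, e^{-1/4}k, e^{-1/2}m)$ from the amalgam-space definition, using Lemma \ref{lem:covering}(i) ($Q_1$-density with $h=1$), and note that the boxes with center points lying outside a large ball in the index lattice $\mathbb{Z}\times\mathbb{Z}\times\mathbb{Z}^2$ together cover the complement of $Q_R$ once $R$ is large enough.

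The next step is the counting/relative-separation estimate. For a fixed basepoint $(p,q,r)$, each translated lattice point $(x,y,z) \in (p,q,r)^{-1}\Lambda$ with $(x,y,z) \notin Q_R$ lies in at least one box $Q_1$ of the amalgam cover, and by Lemma \ref{lem:covering}(ii) (applied with $h=1$, $r=1$, so that $N_1$ is a fixed finite constant) each such box $Q_1(e^j,e^{-1/4}k,e^{-1/2}m)$ contains at most $N_1$ points of the \emph{original} relatively separated set $X$; the relevant fact I actually need, however, is that $\Lambda$ — and hence $(p,q,r)^{-1}\Lambda$, which is just a left translate — is relatively separated, so that each $Q_1$-box contains a uniformly bounded number of points of $(p,q,r)^{-1}\Lambda$. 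Here I should be slightly careful: relative separation of $\Lambda$ is not assumed outright, but it follows from the frame hypothesis together with $\psi \in \mathcal{B}_0$ and the upper-frame-bound/density machinery developed earlier (Proposition \ref{prop:equivalent_D+finite_D-positive} and the relation between an upper frame bound and finite upper density); so I would invoke that $\mathcal{D}^+(\Lambda) < \infty$, whence by Proposition \ref{prop:equivalent_D+finite_D-positive} there is a uniform bound $M$ on $\#(\Lambda \cap Q_1(x,y,z))$ over all $(x,y,z) \in \mathbb{S}$, and left-translation by $(p,q,r)^{-1}$ preserves this because the boxes are left-translates. Combining these, for each amalgam box $Q_\ell := Q_1(e^j,e^{-1/4}k,e^{-1/2}m)$,
\[
\sum_{(x,y,z) \in ((p,q,r)^{-1}\Lambda) \cap Q_\ell} |F(x,y,z)|^2 \le M \cdot \big\| F \cdot \chi_{Q_\ell'} \big\|_\infty^2,
\]
where $Q_\ell'$ is a fixed finite dilate of $Q_\ell$ (enlarged to contain, for every basepoint, the $Q_1$-box in which the lattice point sits); since $F \in W_{\mathbb{S}}(L^\infty,L^1) \subset W_{\mathbb{S}}(L^\infty,L^2)$ (the $\ell^1$ bound on the sup-values over boxes dominates the $\ell^2$ bound, as the sup-values tend to $0$), the sum over all $\ell$ of $\|F\cdot\chi_{Q_\ell'}\|_\infty^2$ converges.

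Finally I would choose $R$ large enough that the boxes $Q_\ell'$ meeting $\mathbb{S}\setminus Q_R$ all have index $(j,k,m)$ outside a ball large enough that $\sum_{(j,k,m)\text{ outside}} \|F \cdot \chi_{Q_\ell'}\|_\infty^2 \le \epsilon/M$; this uses only the finiteness of the full sum, i.e. the convergent-tail property of an element of $W_{\mathbb{S}}(L^\infty,L^2)$, and crucially this tail bound does not depend on $(p,q,r)$ since $F$ is a fixed function and the covering is a fixed covering of $\mathbb{S}$. Then
\[
\sum_{(x,y,z)\in (p,q,r)^{-1}\Lambda \setminus Q_R} |\mathcal{SH}_\psi g(x,y,z)|^2 \le M \sum_{Q_\ell' \cap (\mathbb{S}\setminus Q_R) \ne \emptyset} \|F\cdot\chi_{Q_\ell'}\|_\infty^2 \le \epsilon
\]
uniformly in $(p,q,r)$, as desired. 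I would also note that the stated dependence $R = R(g,\epsilon)$ (not on $\psi$ or $\Lambda$ in a delicate way) is consistent: $R$ depends on $\epsilon$, on $\|\mathcal{SH}_\psi g\|_{W_{\mathbb{S}}(L^\infty,L^1)}$ and on the relative-separation constant $M$ of $\Lambda$; the $\delta$ in the statement plays no role and can be ignored. The main obstacle I expect is the bookkeeping around the basepoint $(p,q,r)$: one must verify that a \emph{single}, basepoint-independent enlargement $Q_\ell \mapsto Q_\ell'$ of the amalgam boxes suffices to capture, for every $(p,q,r)$, the box in which an arbitrary point of $(p,q,r)^{-1}\Lambda$ lies — this is where the morphing of boxes under the (non-abelian) shearlet group action, and the resulting non-uniformity in Euclidean coordinates, makes the estimate genuinely more delicate than in the Gabor or wavelet settings, and where Lemma \ref{lem:covering}'s explicit constants (with $h=1$) do the real work.
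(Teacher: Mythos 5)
Your outline is essentially the paper's own proof: the uniform counting constant $M$ comes from the frame hypothesis via Theorem \ref{theo:main}(i) and Proposition \ref{prop:equivalent_D+finite_D-positive} (with translation invariance of the boxes $Q_1(x,y,z)$ giving the same bound for $(p,q,r)^{-1}\Lambda$), Theorem \ref{theo:b0} together with the embedding $W_{\mathbb{S}}(C,L^{1})\subset W_{\mathbb{S}}(C,L^{2})$ gives a convergent series of box suprema of $\mathcal{SH}_{\psi}g$, and the sum over $(p,q,r)^{-1}\Lambda\setminus Q_{R}$ is bounded by $M$ times a tail of that series, uniformly in $(p,q,r)$ since neither the fixed cover nor $M$ depends on the basepoint. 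Two remarks. First, the ``enlarged box'' $Q_{\ell}'$ is an unnecessary detour: a point of $(p,q,r)^{-1}\Lambda\setminus Q_{R}$ lies in some box $Q_{1}(e^{j},ke^{-1/4},e^{-1/2}m)$ of the fixed cover by Lemma \ref{lem:covering}(i), its value of $|\mathcal{SH}_{\psi}g|$ is dominated by the supremum over that very box, and the number of points of $(p,q,r)^{-1}\Lambda$ in that box is at most $M$ because $M$ is a supremum over \emph{all} centers; no dilation and no basepoint-dependent bookkeeping enter. Second, the step you defer as ``the main obstacle'' --- that only boxes contributing to a small tail can meet $\mathbb{S}\setminus Q_{R}$ --- is exactly where the paper spends most of its proof: it proves the implication \eqref{eq:clam}, with the explicit choice $R=(1+\tfrac{\delta}{2})^{2}e^{\delta}R'+\delta(1+\tfrac{\delta}{2})^{2}e^{\delta}+\delta$, showing that a fixed-size box which leaves $Q_{R}$ must be disjoint from $Q_{R'}$, and then sums over the index set $J$ of boxes disjoint from $Q_{R'}$, choosing $R'$ so that this tail is below $\epsilon/M$. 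Your softer variant does close the gap without those group-action computations: each box $Q_{1}(e^{j},ke^{-1/4},e^{-1/2}m)$ is a bounded subset of $\mathbb{S}$, hence contained in $Q_{R}$ for all sufficiently large $R$, so the indices of boxes meeting $\mathbb{S}\setminus Q_{R}$ eventually avoid any finite set and the corresponding tail of the convergent series tends to $0$ --- at the price of losing the explicit relation between $R$ and the tail parameter that the paper's claim provides. As written, though, this pivotal geometric verification is only asserted, so you should spell out one of these two versions to have a complete proof.
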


Using this lemma, we obtain the following

\begin{theorem}
\label{theo:hap}
Let $\psi \in \mathcal{B}_{0}$ and $\Lambda \subset \mathbb{S}$ be such that $\mathcal{SH}(\psi,\Lambda)$ is a
frame for $L^{2}(\R^{2})$. Then the shearlet frame $\mathcal{SH}(\psi,\Lambda)$ satisfies the Strong HAP.
\end{theorem}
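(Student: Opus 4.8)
The plan is to derive the Strong HAP directly from Lemma~\ref{lem:hap} combined with the frame property of $\mathcal{SH}(\psi,\Lambda)$. First I would fix $f \in L^{2}(\R^{2})$ and $\epsilon > 0$, and observe that the frame expansion gives, for every $g \in L^{2}(\R^{2})$,
\[
g = \sum_{(a,s,t) \in \Lambda} \langle g, \sigma(a,s,t)\psi\rangle\, \widetilde{\sigma(a,s,t)\psi},
\]
so that the error in the Strong HAP, evaluated at $g = \sigma(p,q,r)f$, is exactly the tail
\[
\sigma(p,q,r)f - \sum_{(a,s,t) \in Q_{R}(p,q,r)\cap\Lambda} \langle \sigma(p,q,r)f, \sigma(a,s,t)\psi\rangle\, \widetilde{\sigma(a,s,t)\psi} = \sum_{(a,s,t) \in \Lambda \setminus Q_{R}(p,q,r)} \langle \sigma(p,q,r)f, \sigma(a,s,t)\psi\rangle\, \widetilde{\sigma(a,s,t)\psi}.
\]
Using the Bessel bound $B$ of the dual frame $\{\widetilde{\sigma(a,s,t)\psi}\}$, the squared norm of this tail is bounded by $B \sum_{(a,s,t)\in\Lambda\setminus Q_R(p,q,r)} |\langle \sigma(p,q,r)f, \sigma(a,s,t)\psi\rangle|^{2}$. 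Hence it suffices to control this sum of coefficients over the complement of the box, uniformly in $(p,q,r)$.

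The second step is to rewrite the coefficient sum in a translation-invariant form so that Lemma~\ref{lem:hap} applies. Since $\sigma$ is a unitary representation of $\SS$, we have $\langle \sigma(p,q,r)f, \sigma(a,s,t)\psi\rangle = \langle f, \sigma(p,q,r)^{-1}\sigma(a,s,t)\psi\rangle = \langle f, \sigma((p,q,r)^{-1}(a,s,t))\psi\rangle = \overline{\mathcal{SH}_{\psi}f((p,q,r)^{-1}(a,s,t))}$, and because $Q_R(p,q,r) = (p,q,r)\cdot Q_R$, the condition $(a,s,t) \notin Q_R(p,q,r)$ is equivalent to $(p,q,r)^{-1}(a,s,t) \notin Q_R$. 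Thus
\[
\sum_{(a,s,t)\in\Lambda\setminus Q_R(p,q,r)} |\langle \sigma(p,q,r)f, \sigma(a,s,t)\psi\rangle|^{2} = \sum_{(x,y,z) \in (p,q,r)^{-1}\Lambda \setminus Q_R} |\mathcal{SH}_{\psi}f(x,y,z)|^{2}.
\]
Applying Lemma~\ref{lem:hap} with $g = f$ (note $f \in \mathcal{B}_0$ is exactly the hypothesis needed; if one only has $f \in L^2$, approximate $f$ in $L^2$ by an element of $\mathcal{B}_0$ first and absorb the error using the Bessel bounds of both $\mathcal{SH}(\psi,\Lambda)$ and its dual), we obtain, for any prescribed $\eta > 0$, some $R = R(f,\eta) > 0$ such that this sum is at most $\eta$ for all $(p,q,r) \in \SS$. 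Choosing $\eta = \epsilon^{2}/B$ then yields $\| \sigma(p,q,r)f - \sum_{(a,s,t)\in Q_R(p,q,r)\cap\Lambda}\langle\sigma(p,q,r)f,\sigma(a,s,t)\psi\rangle\widetilde{\sigma(a,s,t)\psi}\|_2 < \epsilon$ uniformly in $(p,q,r)$, which is precisely the Strong HAP.

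The main obstacle, and the only genuinely delicate point, is the density issue hidden in reducing a general $f \in L^2(\R^2)$ to the case $f \in \mathcal{B}_0$: one must split $f = g + (f-g)$ with $g \in \mathcal{B}_0$ and $\|f - g\|_2$ small, then control the tail contribution of $f-g$ not pointwise but through the Bessel bound $\sum_{(x,y,z)} |\mathcal{SH}_{\psi}(f-g)(x,y,z)|^2 \le B' \|f-g\|_2^2$ (valid since $\mathcal{SH}(\psi,\Lambda)$ is a frame, hence Bessel). The cross terms in expanding $|\mathcal{SH}_{\psi}f|^2 = |\mathcal{SH}_{\psi}g + \mathcal{SH}_{\psi}(f-g)|^2$ over the tail are handled by Cauchy--Schwarz. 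Beyond this, everything is a bookkeeping exercise: the representation is unitary, the boxes transform correctly under the left action, and Lemma~\ref{lem:hap} already encapsulates all the hard analytic work (the amalgam-space decay from Theorem~\ref{theo:b0}). I would therefore organize the write-up as: (1) the frame tail identity, (2) the unitary change of variables turning coefficients into $\mathcal{SH}_{\psi}f$ evaluated on $(p,q,r)^{-1}\Lambda\setminus Q_R$, (3) the $\mathcal{B}_0$-density reduction, and (4) the final choice of constants invoking Lemma~\ref{lem:hap}.
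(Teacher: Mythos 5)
Your proposal is correct and follows essentially the same route as the paper: bound the tail of the frame expansion by the dual frame's upper bound $1/A$, use unitarity of $\sigma$ and left-invariance of the boxes to turn the coefficient sum into $\sum_{(x,y,z)\in(p,q,r)^{-1}\Lambda\setminus Q_R}|\mathcal{SH}_{\psi}g(x,y,z)|^{2}$, invoke Lemma \ref{lem:hap} for $g\in\mathcal{B}_0$, and then pass to general $f\in L^{2}(\R^{2})$ by density of $\mathcal{B}_0$ (the paper leaves this last reduction to ``standard arguments,'' which you spell out slightly more explicitly via the Bessel bounds of the frame and its dual).
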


\begin{proof}
First we will prove  that the shearlet frame $\mathcal{SH}(\psi,\Lambda)$ possesses the Strong HAP for any
function $g \in \mathcal{B}_{0}$. For this, let $g \in \mathcal{B}_{0}$ and $\epsilon > 0$. Let $A$ denote
the lower frame bound of $\mathcal{SH}(\psi,\Lambda)$. It is well-known that then the upper frame bound of
the dual frame $\{\widetilde{\sigma(a,s,t)\psi}\}_{(a,s,t) \in \SS}$ is $\frac{1}{A}$. By considering the
frame expansion
\[
\sigma(p,q,r)g = \sum_{(a,s,t) \in \Lambda} \langle \sigma (p,q,r)g, \sigma(a,s,t)\psi \rangle \widetilde{\sigma(a,s,t)\psi},
\]
we obtain
{\allowdisplaybreaks
\begin{eqnarray} \nonumber
\lefteqn{\Big\|\sigma(p,q,r)g - \sum_{(a,s,t) \in Q_{R}(p,q,r)\cap\Lambda} \langle \sigma(p,q,r)g, \sigma(a,s,t)\psi\rangle
\widetilde{\sigma(a,s,t)} \psi \Big\|_{2}^{2}}\\ \nonumber
& = & \Big\|\sum_{(a,s,t) \in \Lambda\setminus Q_{R}(p,q,r)} \langle \sigma(p,q,r)g, \sigma(a,s,t)\psi\rangle
\widetilde{\sigma(a,s,t)\psi}\Big\|_{2}^{2}\\ \nonumber
& \le & \frac{1}{A} \sum_{(a,s,t) \in\Lambda\setminus Q_{R}(p,q,r)}\left|
\langle g, \sigma\left((p,q,r)^{-1}\cdot(a,s,t)\right)\psi\rangle \right|^{2}\\ \nonumber
& = & \frac{1}{A} \sum_{(a,s,t) \in\Lambda\setminus Q_{R}(p,q,r)}\left|\mathcal{SH}_{\psi}g\left((p,q,r)^{-1}\cdot(a,s,t)\right)
\right|^{2}\\ \nonumber
& = & \frac{1}{A} \sum_{(x,y,z) \in (p,q,r)^{-1}\Lambda\setminus Q_{R}}\left|\mathcal{SH}_{\psi}g\left(x,y,z\right) \right|^{2}\\ \label{eq:lastclaimhap}
& \le & \epsilon.
\end{eqnarray}
}
The last inequality follows from Lemma \ref{lem:hap}.

Now suppose that $f$ is any function in $L^{2}(\R^{2})$. Since $\mathcal{B}_{0}$ is dense in
$L^{2}(\R^{2})$, for each $\delta > 0$, there exists some $g \in \mathcal{B}_{0}$ such that
$\|f-g\|_{2} < \delta$.
Using standard arguments the general claim can now be established from here by using \eqref{eq:lastclaimhap}.
\end{proof}


\subsection{The Comparison Theorem}
\label{subsec:comparison}

Suppose $\mathcal{SH}(\psi,\Lambda)$ is a given shearlet frame whose density we want to study.
We assume we know that $\mathcal{SH}(\psi,\Lambda)$ satisfies the Weak HAP. The main idea
is now to compare this density with the density of a \emph{reference shearlet frame}
$\mathcal{SH}(\phi,\Delta)$. In fact, we derive the following comparison theorem.

\begin{theorem}\label{theo:comparison}
Let $\psi, \phi \in L^{2}(\R^{2})$ and $\Lambda, \Delta \subset \mathbb{S}$ be such that
$\mathcal{SH}(\psi,\Lambda)$ and $\mathcal{SH}(\phi,\Delta)$ are frames for $L^2(\RR^2)$
and $\mathcal{SH}(\psi,\Lambda)$ satisfies the Weak HAP. Let $B$ be a upper frame bound of $\mathcal{SH}(\phi,\Delta)$
and $C = \|\phi\|_{2}$. Then, for each $\epsilon > 0$, there exists a positive constant R such that
\[
\frac{C(C-\epsilon)}{B(e^{\frac{R}{2}}+Re^{\frac{R}{2}})^{4}}\cD^{-}(\Delta)
\le \cD^{-}(\Lambda)\quad \mbox{and} \quad
\frac{C(C-\epsilon)}{B(e^{\frac{R}{2}}+Re^{\frac{R}{2}})^{4}}\cD^{+}(\Delta)
\le \cD^{+}(\Lambda).
\]
\end{theorem}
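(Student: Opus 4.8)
The plan is a point‑counting comparison in the spirit of the wavelet case: for each $(p,q,r)\in\SS$ and each large $h$ I will bound $\#\bigl(\Delta\cap Q_h(p,q,r)\bigr)$ by $\#\bigl(\Lambda\cap Q_{h'}(p,q,r)\bigr)$ for a slightly enlarged box, and then normalize by $h^{4}$ and pass to the limit. Fix $\epsilon>0$ (the case $\epsilon\ge C$ being vacuous) and apply the Weak HAP of $\cSH(\psi,\Lambda)$ to the single function $\phi$; this produces $R=R(\phi,\epsilon)>0$ such that $\mathrm{dist}\bigl(\sigma(\delta)\phi,W(R,\delta)\bigr)<\epsilon$ for every $\delta\in\SS$, where $W(R,\delta)=\spann\{\widetilde{\sigma(\lambda)\psi}:\lambda\in Q_R(\delta)\cap\Lambda\}$. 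Since $\sigma$ is unitary and $\|\psi\|_2>0$, a standard Bessel argument shows $\Lambda$ is uniformly locally finite, so each $W(R,\delta)$ is finite–dimensional; write $P_\delta$ for the orthogonal projection onto it and set $g_\delta:=P_\delta\sigma(\delta)\phi$. Because the orthogonal projection realizes the distance, $\|\sigma(\delta)\phi-g_\delta\|_2<\epsilon$, and hence, using $\ip{\sigma(\delta)\phi}{g_\delta}=\|g_\delta\|_2^2$ together with Cauchy--Schwarz, $\|g_\delta\|_2^2\ge\|\sigma(\delta)\phi\|_2^2-\|\sigma(\delta)\phi\|_2\,\|\sigma(\delta)\phi-g_\delta\|_2>C^{2}-C\epsilon=C(C-\epsilon)$.

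The core of the argument is a single–subspace device. Fix $(p,q,r)$ and $h>0$. A direct computation with the group law of $\SS$ (in the spirit of Lemma~\ref{lem:covering}, where the translation coordinate $t'+S_{s'}A_{a'}t$ is the dominant term) shows that $Q_h\cdot Q_R\subseteq Q_{h'}$ for an explicit $h'=h'(h,R)$ with $h'/h\to e^{R/2}(1+R)$ as $h\to\infty$; consequently $Q_R(\delta)\cap\Lambda\subseteq Q_{h'}(p,q,r)\cap\Lambda$ for every $\delta\in\Delta\cap Q_h(p,q,r)$. Put $\mathcal V:=\spann\{\widetilde{\sigma(\lambda)\psi}:\lambda\in Q_{h'}(p,q,r)\cap\Lambda\}$, so that $\dim\mathcal V\le N:=\#\bigl(\Lambda\cap Q_{h'}(p,q,r)\bigr)$ and $W(R,\delta)\subseteq\mathcal V$ for all such $\delta$; if $P_{\mathcal V}$ denotes the orthogonal projection onto $\mathcal V$, the inclusion $W(R,\delta)\subseteq\mathcal V$ gives $g_\delta=P_\delta P_{\mathcal V}\sigma(\delta)\phi$, whence $\|g_\delta\|_2\le\|P_{\mathcal V}\sigma(\delta)\phi\|_2$. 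Summing over $\delta\in\Delta\cap Q_h(p,q,r)$, using the lower estimate of the previous paragraph, extending the sum to all of $\Delta$, expanding $\|P_{\mathcal V}\,\cdot\,\|_2^{2}$ in an orthonormal basis of $\mathcal V$, and invoking the Bessel bound $B$ of $\cSH(\phi,\Delta)$, I obtain
\[
C(C-\epsilon)\,\#\bigl(\Delta\cap Q_h(p,q,r)\bigr)\le\sum_{\delta\in\Delta}\|P_{\mathcal V}\sigma(\delta)\phi\|_2^{2}\le B\,\dim\mathcal V\le B\,\#\bigl(\Lambda\cap Q_{h'}(p,q,r)\bigr).
\]

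It remains to pass to densities. Dividing the displayed inequality by $h^{4}$ and writing $\#(\Lambda\cap Q_{h'}(p,q,r))/h^{4}=(h'/h)^{4}\cdot\#(\Lambda\cap Q_{h'}(p,q,r))/(h')^{4}$, I take the infimum over $(p,q,r)\in\SS$ for the lower density (and the supremum for the upper density) and let $h\to\infty$. The prefactor $(h'/h)^{4}$ converges to $(e^{R/2}+Re^{R/2})^{4}$, and since $h\mapsto h'(h)$ is an increasing continuous bijection of a half–line onto a half–line it reparametrizes the $\liminf$ (respectively $\limsup$) without altering its value; therefore the right–hand side tends to $(e^{R/2}+Re^{R/2})^{4}\,\cD^-(\Lambda)$ (respectively $(e^{R/2}+Re^{R/2})^{4}\,\cD^+(\Lambda)$), and rearranging gives the two asserted inequalities.

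I expect the main obstacle to be the geometric bookkeeping rather than the functional analysis. Establishing $Q_h\cdot Q_R\subseteq Q_{h'}$ with the stated asymptotic ratio is delicate because the noncommutative, four–dimensional group law forces the translation coordinate $t'+S_{s'}A_{a'}t$, which couples shearing with parabolic scaling, to dominate, so a careful estimate along the lines of Lemma~\ref{lem:covering} is needed, and one must check that the limiting constant lands exactly on $(e^{R/2}+Re^{R/2})^{4}$. By contrast, the analytic heart of the proof -- the inclusion $W(R,\delta)\subseteq\mathcal V$ reducing everything to one Bessel estimate for $\cSH(\phi,\Delta)$, with no reference to the frame bounds of $\cSH(\psi,\Lambda)$ -- is short once the boxes are under control.
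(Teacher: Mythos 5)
Your proposal is correct and follows essentially the same route as the paper: apply the Weak HAP to $\phi$ with a single $R=R(\phi,\epsilon)$, use the group-law inclusion $Q_hQ_R\subseteq Q_{h'}$ with $h'/h$ bounded by the stated constant, compare $\#(\Delta\cap Q_h(p,q,r))$ with $\#(\Lambda\cap Q_{h'}(p,q,r))$ through the dimension of the span of the dual-frame elements indexed by $\Lambda\cap Q_{h'}(p,q,r)$ and the Bessel bound $B$ of $\cSH(\phi,\Delta)$, and then pass to densities. The only difference is cosmetic: where the paper estimates $\mathrm{tr}[P_VP_WP_V]$ from below via \cite[Lem.~3]{Gro08} and from above by its rank, you work directly with the single projection onto that span and expand $\sum_{\delta\in\Delta}\|P_{\mathcal V}\sigma(\delta)\phi\|_2^2\le B\dim\mathcal V$ in an orthonormal basis, which is precisely the same inequality proved by hand.
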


The main idea of the proof is the double-projection method introduced in \cite{RS95},
which will be briefly described in the sequel. The detailed proof of Theorem \ref{theo:comparison} is presented in
Subsection \ref{subsubsec:comparison}.

Denoting the dual frame for $\mathcal{SH}_{\psi}(\Lambda)$ by $\{\widetilde{\sigma(a,s,t)\psi}: (a,s,t) \in \Lambda\}$,
for each $h>0$ and $(p,q,r) \in \mathbb{S}$, we consider the finite-dimensional subspaces:
\begin{eqnarray*}
W(h,(p,q,r))
& = & \spann\{\widetilde{\sigma(a,s,t)\psi} : (a,s,t) \in Q_{h}(p,q,r) \cap \Lambda\}\\
V(h,(p,q,r))
& = & \spann\{\sigma(a,s,t)\phi : (a,s,t) \in Q_{h}(p,q,r)\cap \Delta\}.
\end{eqnarray*}
For any fixed $\epsilon > 0$, let $R=R(\phi, \epsilon)$ be the value from Definition \ref{defi:HAP}(a),
and let $(a,s,t) \in Q_{h}(p,q,r)$. For $(x,y,z) \in Q_{R}(a,s,t) \cap \Lambda$, we have
\[
(x,y,z) \in Q_{R}(a,s,t) \cap \Lambda
\subset (p,q,r)Q_{h}Q_{R} \cap \Lambda
\subseteq (p,q,r)Q_{R+he^{\frac{R}{2}}+Rhe^{\frac{R}{4}}}\cap \Lambda,
\]
which implies
\begin{equation} \label{eq:compare1}
W(R,(a,s,t)) \subset W(R+he^{\frac{R}{2}}+Rhe^{\frac{R}{4}}, (p,q,r)).
\end{equation}

Let now $P_{V}$ and $P_{W}$ be the orthogonal projections of $L^{2}(\R^{2})$ onto $V(h,(p,q,r))$ and
$W(R+he^{\frac{R}{2}}+Rhe^{\frac{R}{4}}, (p,q,r))$, respectively. The main idea is to consider
the positive, self-adjoint operator
\[
T = P_{V}P_{W}P_{V} : L^{2}(\R^{2}) \to V(h,(p,q,r)),
\]
and derive a lower and upper estimate for $\mbox{\rm tr}[T]$ in terms of $\cD^{\pm}(\Lambda)$ and $\cD^{\pm}(\Delta)$.


\section{Necessary Conditions for Existence of Irregular Shearlet Frames}
\label{sec:necessary}

We can now link geometric properties of the sets of parameters of shearlet systems in terms
of shearlet density with frame properties of the associated systems. In fact, the existence of a lower
frame bound for a shearlet system $\cSH(\psi,\Lambda)$ is closely related to a positive
lower shearlet density provided that this shearlet system satisfies the Weak HAP, and the
existence of an upper frame bound is closely related to a finite density. Also, recall that
provided $\psi \in \cB_0$, the associated shearlet system always satisfies the Weak HAP
by Theorem \ref{theo:hap}, hence statement (ii) below is applicable to a large class
of generators.

\begin{theorem} \label{theo:main}
Let $\psi \in L^{2}(\mathbb{R}^{2})$ be a nonzero function, let $\Lambda$ be a
discrete subset of $\mathbb{S}$, and let $w : \SS \to \RR^+$ be a weight function.
\begin{itemize}
\item[(i)] If $\mathcal{SH}(\psi,\Lambda,w)$ possesses an upper frame bound for $L^{2}(\R^{2})$,
then $\cD^{+}(\Lambda,w) < \infty$.
\item[(ii)] If $\mathcal{SH}(\psi,\Lambda)$ is a frame for $L^{2}(\R^{2})$ and satisfies the Weak HAP,
then $D^{-}(\Lambda) >0$.
\end{itemize}
\end{theorem}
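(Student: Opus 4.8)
\textbf{Proof plan for Theorem \ref{theo:main}.}

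The plan is to treat the two parts separately, since they rely on different mechanisms. For part (i), I would argue directly from the upper frame bound. Suppose $\mathcal{SH}(\psi,\Lambda,w)$ has upper frame bound $B$; by normalizing we may assume $\|\psi\|_2 = 1$. The key step is to localize: since $\psi \neq 0$, there is a fixed radius and a fixed threshold $c > 0$ such that for every $(a,s,t)$ in a neighborhood $Q_{h_0}$ of the identity (for a suitable $h_0 > 0$ coming from continuity of the representation $\sigma$ in the strong operator topology), we have $|\langle \psi, \sigma(a,s,t)\psi\rangle|^2 \ge c$. Then, fixing any $(x,y,z) \in \SS$ and applying the frame inequality to the test function $f = \sigma(x,y,z)\psi$, only the terms with $(a,s,t) \in \Lambda \cap Q_{h_0}(x,y,z)$ (where $Q_{h_0}(x,y,z) = (x,y,z)\cdot Q_{h_0}$, after using that $(x,y,z)^{-1}(a,s,t) \in Q_{h_0}$ is the relevant condition) are each bounded below by $w(a,s,t)\cdot c$, using left-invariance of the inner product under the unitary $\sigma$. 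This gives $c \cdot \#_w(\Lambda \cap Q_{h_0}(x,y,z)) \le B \|\psi\|_2^2 = B$, uniformly in $(x,y,z)$, which is condition (ii) of Proposition \ref{prop:equivalent_D+finite_D-positive}; hence $\cD^+(\Lambda,w) < \infty$. The only mild subtlety is making sure the neighborhood on which $|\langle\psi,\sigma(a,s,t)\psi\rangle|$ stays bounded away from zero is expressed as a translate $Q_{h_0}$ of the standard box, but since $Q_{h_0}$ shrinks to $\{(1,0,0)\}$ as $h_0 \to 0$ this is automatic.

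For part (ii), I would use the Comparison Theorem (Theorem \ref{theo:comparison}) together with a concrete reference frame. First I would invoke the existence of a \emph{regular} shearlet frame $\mathcal{SH}(\phi,\Delta)$ with $\phi \in \mathcal{B}_0$ and $\Delta$ a regular lattice of the form $\{(a_0^j, b_0 k, c_0 m)\}$; its existence with good frame bounds is known from \cite{KL07, KKL10} (or one may cite \cite{Gro09b}). The crucial point is that such a regular $\Delta$ has \emph{positive uniform shearlet density}: this follows from Lemma \ref{lem:covering}, since (after the coordinate change matching $\Delta$ to the set $X$ of Lemma \ref{lem:covering} with an appropriate fixed $h$) part (iii) of that lemma gives a uniform positive lower bound on $\#(\Delta \cap Q_{rh}(x,y,z))/(rh)^4$, so $\cD^-(\Delta) > 0$. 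Since $\mathcal{SH}(\psi,\Lambda)$ satisfies the Weak HAP by hypothesis, Theorem \ref{theo:comparison} applies with the roles $\psi \leftrightarrow \psi$, $\phi \leftrightarrow \phi$, yielding a positive constant $K = \frac{C(C-\epsilon)}{B(e^{R/2}+Re^{R/2})^4}$ (for any fixed small $\epsilon$, with $R = R(\phi,\epsilon)$ and $B,C$ the frame-bound/norm constants of the reference frame) such that $K \cdot \cD^-(\Delta) \le \cD^-(\Lambda)$. Since $K > 0$ and $\cD^-(\Delta) > 0$, we conclude $\cD^-(\Lambda) > 0$, as desired.

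The step I expect to be the main obstacle is verifying that a suitable reference frame $\mathcal{SH}(\phi,\Delta)$ with $\phi \in \mathcal{B}_0$ and with provably positive lower shearlet density actually exists; this is where one genuinely needs an input from the construction literature rather than from the geometric machinery developed here. In fact the density computation for the regular set $\Delta$ is essentially Lemma \ref{lem:covering}(iii) after a rescaling, so that part is routine; the content is that some regular band-limited shearlet frame is known, and that its generator can be taken inside $\mathcal{B}_0$ (which amounts to checking the decay and support conditions (i)--(ii) of Definition \ref{defi_B0} against an explicit construction). Once that is in place, parts (i) and (ii) are short, modular consequences of Proposition \ref{prop:equivalent_D+finite_D-positive} and Theorem \ref{theo:comparison} respectively. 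A secondary point worth a sentence in the write-up is that part (ii) is stated without weights, consistent with the Weak HAP and Comparison Theorem being formulated for unweighted systems, so no reconciliation of weighted and unweighted densities is needed there.
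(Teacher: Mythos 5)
Your overall strategy coincides with the paper's: part (i) reduces to Proposition \ref{prop:equivalent_D+finite_D-positive} via a pointwise lower bound on shearlet coefficients over a single box transported by the group action, and part (ii) is the Comparison Theorem \ref{theo:comparison} against a regular reference shearlet frame. For (i), your direct version --- testing against $f=\sigma(x,y,z)\psi$ and using strong continuity of $\sigma$ to bound $|\langle\psi,\sigma(\cdot)\psi\rangle|$ below on some box $Q_{h_0}$ near the identity --- is correct and is essentially the contrapositive-free form of the paper's argument (which translates an auxiliary window $\eta$ and a box $Q_h(p,q,r)$ on which $\cSH_\psi\eta$ is bounded below); your choice of $\psi$ itself as the window even sidesteps having to ensure $\cSH_\psi\eta\not\equiv 0$.

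Two points in (ii) need repair, though neither is fatal. First, the requirement $\phi\in\cB_0$ on the reference generator is unnecessary: in Theorem \ref{theo:comparison} the Weak HAP is assumed only of $\cSH(\psi,\Lambda)$, which is exactly the hypothesis of (ii), and the constant $R=R(\phi,\epsilon)$ arises from applying that HAP to the function $\phi$; so the ``main obstacle'' you single out (producing a reference generator inside $\cB_0$) is a misdiagnosis --- any $\phi\in L^2(\RR^2)$ generating a regular shearlet frame, as supplied by the construction literature \cite{GKL06,KL07,Gro09b}, suffices. Second, the positive lower density of a general regular lattice $\Delta=\{(a^j,bk,cm)\}$ does not follow from Lemma \ref{lem:covering}(iii) ``after a coordinate change'': that lemma concerns the special set $X=\{(e^{jh},he^{-h/4}k,he^{-h/2}m)\}$, whose three step sizes are tied together through the single parameter $h$, and a general regular lattice is not of this form for any $h$, nor is there a density-preserving change of variables reducing it to one. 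Either you fix the lattice to be exactly $X$ for some $h$ and then must separately justify that a frame exists for that particular sampling, or --- as the paper does --- you invoke Proposition \ref{prop:oversampled}, which computes the uniform density $\cD^{\pm}(\Delta)=\frac{1}{bc^{2}\ln a}$ for arbitrary regular (indeed oversampled) lattices and whose proof does not use Theorem \ref{theo:main}, so no circularity arises. With these substitutions your argument matches the paper's proof.
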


\begin{proof}
(i). We will show that provided $\cD^+(\Lambda,w) = \infty$, for each $N > 0$ there exists some
$g \in  L^{2}(\mathbb{R}^{2})$ with $\|g\|_{2} = 1$ such that
\beq \label{eq:maineq1}
\sum_{(a,s,t) \in \Lambda}|\langle g, w(a,s,t)^{\frac{1}{2}}\sigma(a,s,t)\psi\rangle|^{2} > N.
\eeq
Notice that this implies the non-existence of an upper frame bound for the shearlet system
$\mathcal{SH}_{\psi}(\Lambda,w)$.

First, choose any $\eta \in L^{2}(\mathbb{R}^{2})$ with $\|\eta\|_{2} = 1$. Since the
shearlet transform is continuous, there exist $h > 0$ and $(p,q,r) \in \mathbb{S}$ such that
\[
\delta = \inf_{(a,s,t) \in Q_{h}(p,q,r)}|\mathcal{SH}_{\psi}\eta(a,s,t)| > 0.
\]
Next, let $N > 0$ be arbitrary. Since $D^{+}(\Lambda,w) = \infty$, Proposition \ref{prop:equivalent_D+finite_D-positive}
implies that there exists some $(x,y,z) \in \mathbb{S}$ such that
\[
\#_{w} (\Lambda \cap Q_{h}(x,y,z)) \ge N.
\]
Observe that the function $g:= \sigma((x,y,z)\cdot (p,q,r)^{-1})\eta$ satisfies $g \in L^{2}(\mathbb{R}^{2})$
as well as $\|g\|_{2} = \|\eta\|_{2} = 1$. By using
\[
(a,s,t) \in Q_{h}(x,y,z) \Longrightarrow (p,q,r)\cdot(x,y,z)^{-1}\cdot (a,s,t) \in Q_{h}(p,q,r),
\]
we obtain
\begin{eqnarray*}
\lefteqn{\sum_{(a,s,t) \in \Lambda} |\langle g, w(a,s,t)^{\frac{1}{2}}\sigma(a,s,t)\psi \rangle|^{2}}\\
& \ge & \sum_{(a,s,t) \in \Lambda \cap Q_{h}(x,y,z)} |\langle \sigma((x,y,z)\cdot (p,q,r)^{-1})\eta, w(a,s,t)^{\frac{1}{2}}\sigma(a,s,t)\psi \rangle|^{2} \\
& = & \sum_{(a,s,t) \in \Lambda \cap Q_{h}(x,y,z)} w(a,s,t)|\langle \eta, \sigma((p,q,r)\cdot (x,y,z)^{-1}\cdot (a,s,t))\psi \rangle|^{2} \\
& = & \sum_{(a,s,t) \in \Lambda \cap Q_{h}(x,y,z)} w(a,s,t)| \mathcal{SH}_{\psi}\eta(\underbrace{(p,q,r) \cdot (x,y,z)^{-1}\cdot (a,s,t))}_{\in Q_{h}(p,q,r)}|^{2} \\
& \ge & \#_{w}(\Lambda \cap Q_{h}(x,y,z))\cdot\delta^{2}\\[1ex]
& \ge & N\cdot\delta^{2}.
\end{eqnarray*}
Thus, \eqref{eq:maineq1} is proved.

(ii). Let $\phi \in L^{2}(\R^{2})$ and $\Delta = \{(a^{j}, bk, cm):j,k \in \Z, m \in \Z^{2}\}$ with $a>1$ and $b,c
>0$ be chosen such that the regular shearlet system $\mathcal{SH}_{\phi}(\Delta)$ forms a frame for $L^2(\RR^2)$ with
frame bounds $A$ and $B$. It will be proven in Proposition \ref{prop:oversampled} -- the proof of this proposition
does not require the result under consideration -- that the subset $\Delta$ of the
shearlet group $\SS$ possesses a uniform density, in particular, $\cD^-(\Delta) = \cD^+(\Delta) = \frac{1}{bc^{2}\ln a}$.
Using the constant $C:= \|\phi\|_{2}$ and applying Theorem \ref{theo:comparison} to $\mathcal{SH}_{\psi}(\Lambda)$ and
$\mathcal{SH}_{\phi}(\Delta)$,
\[
\cD^{-}(\Lambda) \ge \frac{1}{bc^{2}\ln a}\cdot\frac{C(C-\epsilon)} {B(e^{\frac{R}{2}}+Re^{\frac{R}{4}})^{4}} > 0,
\]
what was claimed.
\end{proof}


\section{Density Analysis of Various Classes of Shearlet Systems}
\label{sec:examples}

To illustrate applicability of the introduced density analysis and, in particular, of Theorem
\ref{theo:main}, we will study different classes of weighted shearlet systems --
meaning, different classes of sets of parameters -- concerning their potential
to lead to a frame with suitable generator. The classes to be analyzed consist of
oversampled shearlet systems and co-shearlet systems. We also add a discussion
of the (non-)usefulness of density notions based on different (isomorphic) shearlet
groups such as $\tilde{\SS}$.

\subsection{Oversampled Shearlet Systems}

The technique of oversampling was already used for different types of systems; for an
overview with various references, see \cite{HLWW04}. The main objective is to
generate a denser location grid, aiming at a positive lower frame bound (see also
\cite{KKL10}). Customarily, the translation parameter of a regular system -- such as
the regular shearlet systems \eqref{eq:classicalshearlets} -- is oversampled.
We follow this strategy with the following

\begin{definition}
Let $\psi \in L^{2}(\mathbb{R}^{2})$, $a>1$, and $b,c > 0$, and let $\{R_{j,k}\}_{j,k \in
\mathbb{Z}} \subset GL_{2}(\mathbb{R})$. Then we define the {\em oversampled shearlet systems}
generated by $\psi$ with respect to the sequence of matrices $\{R_{j,k}\}_{j,k \in
\mathbb{Z}} \subset GL_{2}(\mathbb{R})$ to be $\cSH(\psi,\Lambda,w)$,
where
\beq \label{eq:defiLambda}
\Lambda = \{(a^{j},bk,cR^{-1}_{jk}m) : j, k \in \ZZ, m \in \ZZ^2\}
\eeq
and
\beq \label{eq:defiw}
w(a^{j},bk,cR^{-1}_{jk}m) = |\det R_{j,k}|^{-1}.
\eeq
\end{definition}

Obviously, this definition includes the regular shearlet systems \eqref{eq:classicalshearlets}
as a special case by choosing $R_{j,k} = Id$ for all $j,k$.

We now study two different types of oversampling matrices $\{R_{j,k}\}_{j,k \in
\mathbb{Z}} \subset GL_{2}(\mathbb{R})$:
\bitem
\item Diagonal matrices: This study includes regular shearlet systems, and, in general,
provides a lattice-oriented oversampling.
\item Shear matrices: These allows the oversampling to be biased towards
a prespecified direction.
\eitem
Interestingly, the upper and lower weighted shearlet density coincide, hence
all oversampled shearlet systems, and, in particular, regular shearlet systems
are associated with a set of parameters with a uniform weighted shearlet density.
Intuitively, this seems plausible by regarding the positioning of the parameters
displayed in Figure \ref{fig:regularshearlets}.

\begin{figure}[ht]
\begin{center}
\includegraphics[height=1.4in]{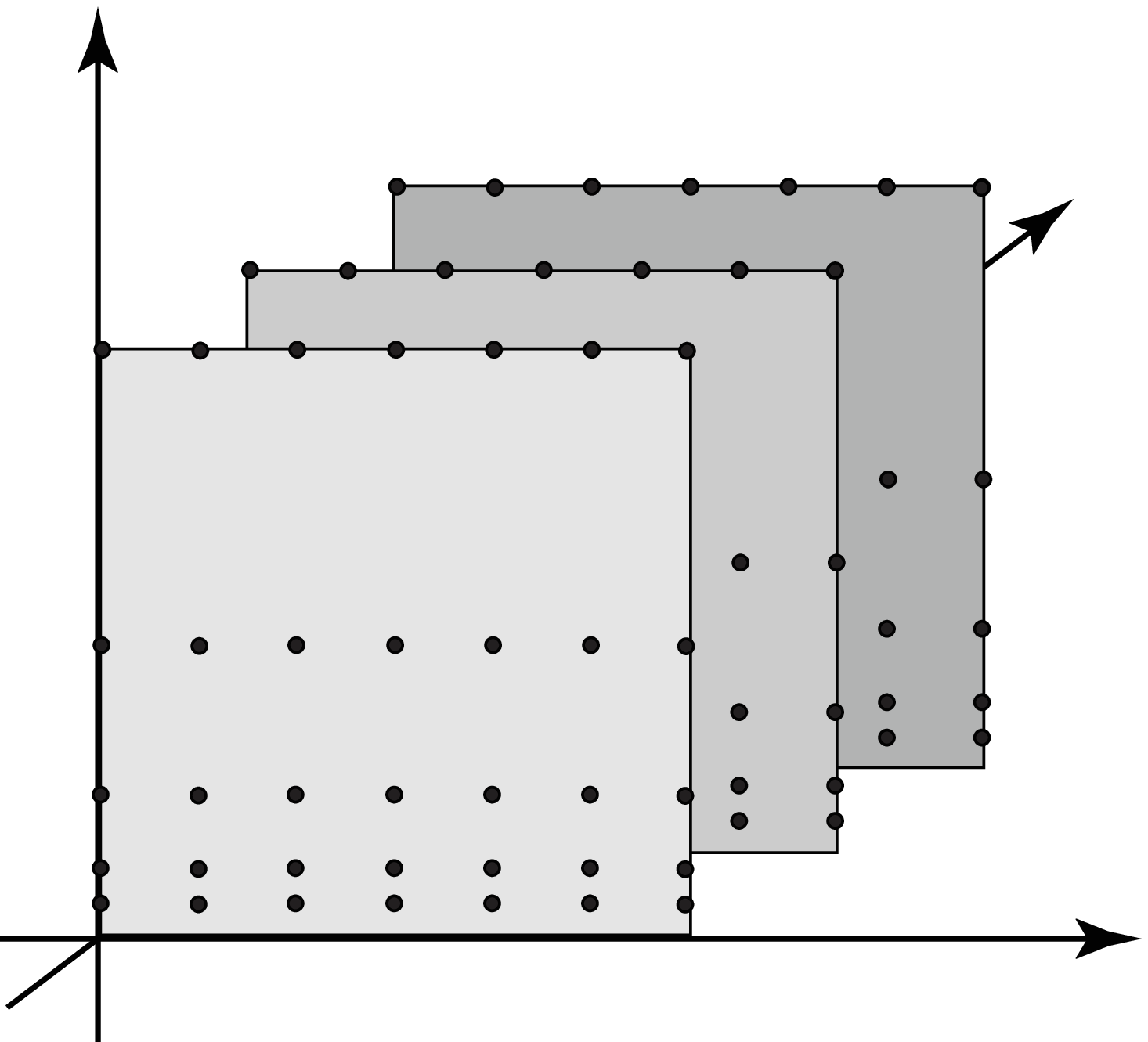}
\put(-8,72){\footnotesize{$s$}}
\put(-110,90){\footnotesize{$a$}}
\put(-10,0){\footnotesize{$t_2$}}
\hspace*{1cm}
\includegraphics[height=1.4in]{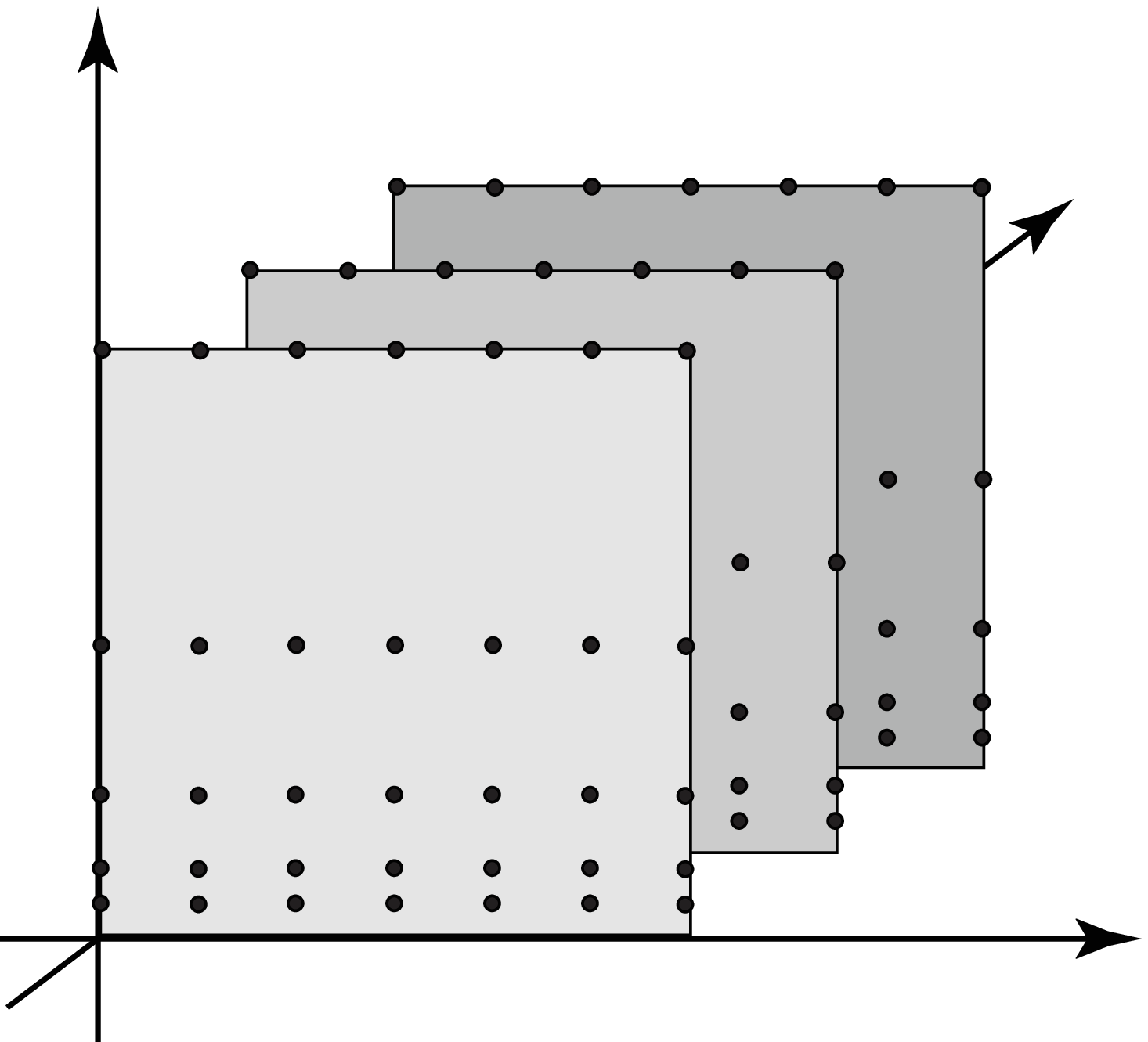}
\put(-8,72){\footnotesize{$t_1$}}
\put(-110,90){\footnotesize{$a$}}
\put(-10,0){\footnotesize{$t_2$}}
\hspace*{1cm}
\includegraphics[height=1.4in]{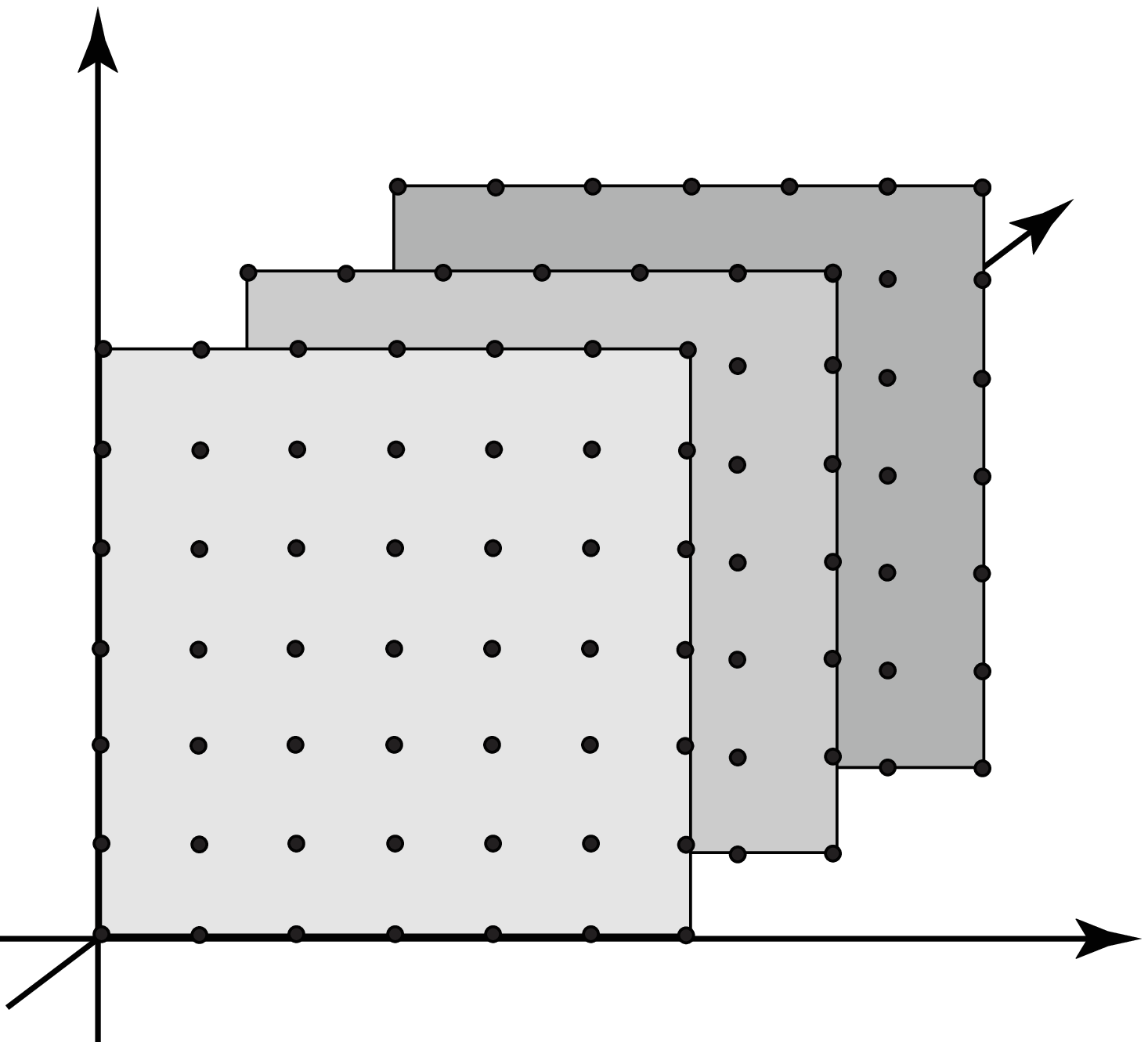}
\put(-8,72){\footnotesize{$s$}}
\put(-110,90){\footnotesize{$t_1$}}
\put(-10,0){\footnotesize{$t_2$}}
\end{center}
\caption{Positioning of the parameters of a regular shearlet system.}
\label{fig:regularshearlets}
\end{figure}

\begin{prop}\label{prop:oversampled}
Let $\psi \in L^{2}(\mathbb{R}^{2})$, $a>1$, and $b,c > 0$, and let $\{R_{j,k}\}_{j,k \in
\mathbb{Z}} \subset GL_{2}(\mathbb{R})$ be one of the following two cases:
\bitem
\item[(a)] $R_{j,k} = $ {\rm diag}$(r_{j,k}^{(1)},r_{j,k}^{(2)})$ with $r_{j,k}^{(1)},r_{j,k}^{(2)} \in \RR$ for all $j,k \in \Z$.
\item[(b)] $R_{j,k} = S_{bk}$ for all $j,k \in \Z$.
\eitem
Let $\cSH(\psi,\Lambda,w)$ be the associated oversampled shearlet system with $\Lambda$
and $w$ being defined by \eqref{eq:defiLambda} and \eqref{eq:defiw}, respectively. Then
$\Lambda$ with weights $w$ has the uniform weighted shearlet density
\[
\cD^{+}(\Lambda,w)=  \cD^{-}(\Lambda,w) = \frac{1}{bc^{2}\ln a}.
\]
\end{prop}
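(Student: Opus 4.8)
The plan is to use Proposition \ref{prop:equivalent_D+finite_D-positive}, which reduces the computation of $\cD^{\pm}(\Lambda,w)$ to the behavior of $\#_w(\Lambda\cap Q_h(x,y,z))$ for a \emph{single} convenient value of $h$. In fact, I would take $h = \ln a$, so that the scale component $[e^{-h/2},e^{h/2})$ of a box $Q_h$ exactly tiles the scaling axis at the grid points $\{a^j\}$. The strategy is: first, describe explicitly the morphed box $Q_h(x,y,z) = (x,y,z)\cdot Q_h$ using the group law $(a,s,t)\cdot(a',s',t') = (a'a, s'+s\sqrt{a'}, t'+S_{s'}A_{a'}t)$; second, count how many grid points $(a^j, bk, cR_{jk}^{-1}m)$ land in it; third, sum the weights $|\det R_{j,k}|^{-1}$ over those points; and fourth, show the resulting quantity, divided by $h^4$, is constant in $(x,y,z)$ (or at least has matching $\liminf\inf$ and $\limsup\sup$), yielding $\frac{1}{bc^2\ln a}$.

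Concretely, I would first handle the regular case $R_{j,k}=\mathrm{Id}$ as the model computation. Writing $(x,y,z)\cdot(a^j,bk,cm) = (a^j x,\, bk + y\sqrt{a^j},\, cm + S_{bk}A_{a^j}z)$, the condition that this lies in $Q_h = [e^{-h/2},e^{h/2})\times[-h/2,h/2)\times[-h/2,h/2)^2$ with $h=\ln a$ pins down $j$ uniquely (the scale condition $a^j x \in [a^{-1/2}, a^{1/2})$ selects one integer $j=j(x)$), then pins the admissible $k$'s to an interval of length $h/\sqrt{a^j x}$ in the variable $bk$, hence roughly $\frac{h}{b\sqrt{a^j x}}$ values, and for each such $(j,k)$ the translation condition $cm + S_{bk}A_{a^j}z \in [-h/2,h/2)^2$ confines $m$ to a parallelogram of area $\frac{h^2}{c^2 |\det A_{a^j}|} = \frac{h^2}{c^2 a^{3j/2}}$, hence $\approx \frac{h^2}{c^2 a^{3j/2}}$ lattice points. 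Multiplying, $\#(\Lambda\cap Q_h(x,y,z)) \approx \frac{h}{b\sqrt{a^j x}}\cdot\frac{h^2}{c^2 a^{3j/2}} \cdot (\text{number of scales}=1)$; the extra factor of $h$ needed to reach $h^4$ comes from tracking $j$ over a box of size $rh$ instead, or more carefully from the observation that over the rescaled box the scale index ranges over $\approx h/\ln a = 1$ sheets but the correct accounting over $Q_h$ with $h$ large must be done via the $rh$-scaling in Proposition \ref{prop:equivalent_D+finite_D-positive} — so I would instead directly estimate $\#_w(\Lambda\cap Q_{rh}(x,y,z))/(rh)^4$ and let $r\to\infty$, where all the boundary/rounding errors become lower-order and the volume heuristic $\#_w \sim \mu_\SS/(bc^2\ln a)$ becomes exact, giving $\mu_\SS(Q_{rh})/(rh)^4 = 1$ times $\frac{1}{bc^2\ln a}$.

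For case (a), diagonal $R_{j,k}$: the translation lattice at level $(j,k)$ is $c R_{j,k}^{-1}\ZZ^2$, which has covolume $c^2 |\det R_{j,k}|^{-1}$, and the weight is exactly $|\det R_{j,k}|^{-1}$, so the \emph{weighted} point count in any fixed region is $\#_w \approx \frac{\text{area}}{c^2|\det R_{j,k}|^{-1}}\cdot |\det R_{j,k}|^{-1} = \frac{\text{area}}{c^2}$ — the dependence on $R_{j,k}$ cancels. This is the whole point of the weight choice \eqref{eq:defiw}, and it reduces case (a) to the regular computation. Case (b), $R_{j,k}=S_{bk}$: here $|\det S_{bk}| = 1$, so $w\equiv 1$ and the lattice is $cS_{bk}^{-1}\ZZ^2$, still of covolume $c^2$; one checks that $S_{bk}^{-1}$ merely shears the translation parallelogram without changing its area, so again the count is $\approx \frac{\text{area}}{c^2}$ per $(j,k)$ sheet. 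I anticipate the main obstacle is the careful bookkeeping of the morphed boxes $Q_{rh}(x,y,z)$ — in particular showing that the $\sup$ over $(x,y,z)$ and $\inf$ over $(x,y,z)$ of $\#_w(\Lambda\cap Q_{rh}(x,y,z))/(rh)^4$ both converge to the same limit as $r\to\infty$, i.e.\ that the boundary-lattice-point discrepancies are uniformly $o((rh)^4)$ despite the shearing and anisotropic scaling distorting the boxes; Lemma \ref{lem:covering} provides exactly the uniform upper and lower counts $N_r, \tilde N_r$ with $N_r/\tilde N_r \to$ const that make this rigorous, so the proof will invoke it to squeeze the weighted count between multiples of $(rh)^4$ and then identify the common constant by the volume argument.
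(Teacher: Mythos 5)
Your overall strategy --- fix a center, transport each lattice point into the undistorted box $Q_h$ via the group action, count admissible $j$, $k$, $m$ sheet by sheet, and observe that the weight $|\det R_{j,k}|^{-1}$ cancels against the covolume $c^{2}/|\det R_{j,k}|$ of the translation lattice $cR_{j,k}^{-1}\ZZ^{2}$ --- is exactly the mechanism of the paper's proof (and your use of $(x,y,z)\cdot\lambda\in Q_h$ instead of $\lambda\in Q_h(x,y,z)$ is harmless, since inversion is a bijection of $\SS$ and the sup/inf run over the whole group). However, your model computation for the regular case contains two concrete counting errors. The condition on the shear coordinate is $bk+y\sqrt{a^{j}}\in[-h/2,h/2)$ (resp.\ $bk-ya^{j/2}/\sqrt{x}\in[-h/2,h/2)$ in the paper's inverse form): the factor $\sqrt{a^{j}}$ multiplies the \emph{fixed} center coordinate $y$ and therefore only translates an interval of length $h$ in the variable $bk$, so the number of admissible $k$ is $\frac{h}{b}\pm1$, not $\frac{h}{b\sqrt{a^{j}x}}$. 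Likewise the translation condition reads $cR_{j,k}^{-1}m\in[-h/2,h/2)^{2}+S_{bk}A_{a^{j}}A_{x}^{-1}S_{y}^{-1}z$: the matrices act on the fixed vector $z$ and merely translate the square of side $h$, so $m$ ranges over a region of area $\frac{h^{2}}{c^{2}}|\det R_{j,k}|$ (i.e.\ $\frac{h|r^{(1)}_{j,k}|}{c}\cdot\frac{h|r^{(2)}_{j,k}|}{c}$ values in case (a), and a sheared but area-preserving window giving $(\frac hc)^{2}$ values in case (b)), not $\frac{h^{2}}{c^{2}a^{3j/2}}$. With your counts the per-box total would depend on $x$ and $j$ and would not come out to $\frac{h^{4}}{bc^{2}\ln a}+\mathcal{O}(h^{3})$; the ``missing factor of $h$'' you try to recover by rescaling to $Q_{rh}$ is a symptom of this miscount, not something the $r\to\infty$ limit repairs.

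The proposed rescue also does not close the gap. Proposition \ref{prop:equivalent_D+finite_D-positive} only characterizes finiteness of $\cD^{+}$ and positivity of $\cD^{-}$; it gives no access to the numerical value. Lemma \ref{lem:covering} counts intersections with the reference set $X=\{(e^{jh},he^{-h/4}k,he^{-h/2}m)\}$, not with $\Lambda$, so sandwiching $\#_w(\Lambda\cap Q_{rh}(x,y,z))$ between $\tilde N_r$- and $N_r$-type bounds would at best reprove boundedness, never the exact constant $\frac{1}{bc^{2}\ln a}$ --- which is precisely the content of the proposition and is what Theorem \ref{theo:main}(ii) consumes. Once the sheet counts are corrected, the paper's direct argument goes through without any covering machinery: $\frac{h}{\ln a}\pm1$ values of $j$, $\frac hb\pm1$ values of $k$, $\frac{h|r^{(i)}_{j,k}|}{c}\pm1$ values of $m_i$ (suitably sheared in case (b)), multiplied by the weight $|\det R_{j,k}|^{-1}$, give $\#_w(\Lambda\cap Q_h(x,y,z))=\frac{h^{4}}{bc^{2}\ln a}+\mathcal{O}(h^{3})$ uniformly in $(x,y,z)$, and dividing by $h^{4}$ and letting $h\to\infty$ yields both densities at once --- your weight-cancellation observation is then exactly what makes cases (a) and (b) collapse to the regular computation.
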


\begin{proof}
We first assume that the sequence of matrices $\{R_{j,k}\}_{j,k \in \mathbb{Z}} \subset GL_{2}(\mathbb{R})$
satisfies (a). Now we fix a center point $(x,y,z) \in \mathbb{S}$ and a size $h > 0$, and
consider the box $Q_h(x,y,z)$. If $(a^{j}, bk, cR_{j,k}^{-1}m) \in \Lambda$ is contained in
$Q_{h}(x,y,z)$, then
\[
(x,y,z)^{-1} \cdot (a^{j}, bk, cR_{j,k}^{-1}m) =
\left( \frac{a^{j}}{x}, bk - \frac{y a^{j/2}}{\sqrt{x}},
cR_{j,k}^{-1}m - S_{bk}A_{a^{j}}A_{x}^{-1}S_{y}^{-1}z \right)\in
Q_{h}.
\]

First, $\frac{a^{j}}{x} \in [e^{-h/2}, e^{h/2})$ implies
\[
\log_{a} x - \frac{h}{2\ln a}  \le j \le \log_{a} x + \frac{h}{2\ln a},
\]
which is satisfied for at least $\frac{h}{\ln a}-1$ and at most $\frac{h}{\ln a}+1$ values of $j$.
Next, $bk - \frac{y a^{j/2}}{\sqrt{x}} \in \left[-\frac{h}{2}, \frac{h}{2} \right)$ yields
\[
\frac{ya^{j/2}}{b\sqrt{x}} - \frac{h}{2b} \le k \le \frac{ya^{j/2}}{b\sqrt{x}} + \frac{h}{2b},
\]
which, for a fixed value of $j$, is fulfilled for at least $\frac{h}{b}-1$ and at most $\frac{h}{b}+1$
values of $k$. Finally,  $c R_{j,k}^{-1} m - \underbrace{S_{bk}A_{a^{j}}A_{x}^{-1}S_{y}^{-1} z}_{=:
(C_{1},C_{2})^{T}} = \gamma \in  \left[-\frac{h}{2}, \frac{h}{2} \right)^{2}$ leads to
\begin{eqnarray} \label{eq:mmm_a}
\gk{\frac{C_{1}}{c}|r_{j,k}^{(1)}|}  -\frac{h}{2c}|r_{j,k}^{(1)}| \le & m_{1} & \le  \gk{\frac{C_{1}}{c}|r_{j,k}^{(1)}|} + \frac{h}{2c}|r_{j,k}^{(1)}|,\\
\gk{\frac{C_{2}}{c}|r_{j,k}^{(2)}|} -\frac{h}{2c}|r_{j,k}^{(2)}| \le & m_{2} & \le  \gk{\frac{C_{2}}{c}|r_{j,k}^{(2)}|} + \frac{h}{2c}|r_{j,k}^{(2)}|. \label{eq:mmm_b}
\end{eqnarray}
For fixed $j$ and $k$, \eqref{eq:mmm_a} is satisfied for at least $\frac{h|r_{j,k}^{(1)}|}{c}-1$
and at most $\frac{h|r_{j,k}^{(1)}|}{c}+1$ values for $m_1$, and \eqref{eq:mmm_b} is satisfied
for at least $\frac{h|r_{j,k}^{(2)}|}{c}-1$ and at most $\frac{h|r_{j,k}^{(2)}|}{c}+1$ values for $m_2$.

Summarizing,
\beq \label{eq:estimateQjxyz}
\#_{w}(\Lambda \cap Q_{h}(x,y,z))= \frac{1}{|\det R_{j,k}|}\cdot\frac{h}{\ln a} \cdot \frac{h}{b}\cdot
\frac{h|r_{j,k}^{(1)}|}{c} \cdot \frac{h|r_{j,k}^{(2)}|}{c}  + \mathcal{O}(h^{3}).
\eeq
Thus,
{\allowdisplaybreaks
\begin{eqnarray*}
\cD^{+}(\Lambda,w)
& = & \limsup_{h \to \infty} \sup_{(x,y,z) \in \mathbb{S}}\frac{\#_{w}(\Lambda \cap
Q_{h}(x,y,z))}{h^{4}}\\
& = & \limsup_{h \to \infty} \left[ \frac{|r_{j,k}^{(1)}\cdot r_{j,k}^{(2)}|}{bc^{2}\ln a|\det R_{j,k}|}
+ \frac{1}{h^{4}}\mathcal{O}(h^{3})\right]\\
& = & \lim_{h \to \infty} \left[ \frac{1}{bc^{2}\ln a} + \frac{1}{h^{4}}\mathcal{O}(h^{3})\right]\\
& = & \frac{1}{bc^{2}\ln a}\\[1ex]
& = & \cD^{-}(\Lambda,w).
\end{eqnarray*}
}
This proves part (a).

Next, we assume that the sequence of matrices $\{R_{j,k}\}_{j,k \in \mathbb{Z}} \subset GL_{2}(\mathbb{R})$
satisfies (b). This proof follows the argumentation of part (a), except that \eqref{eq:mmm_a} and \eqref{eq:mmm_b}
are substituted by
\begin{eqnarray*}
\frac{C_{1}}{c} \gk{+} bkm_{2}-\frac{h}{2c} \le & m_{1} & \le \frac{C_{1}}{c} \gk{+}bkm_{2} + \frac{h}{2c},\\
\frac{C_{2}}{c} -  \frac{h}{2c} \le & m_{2} & \le  \frac{C_{2}}{c}+ \frac{h}{2c},
\end{eqnarray*}
with $C_1$ and $C_2$ defined by $ c \gk{S_{bk}^{-1}} m - \underbrace{S_{bk}A_{a^{j}}A_{x}^{-1}S_{y}^{-1}z}_{=:(C_{1},
C_{2})^{T}}$. This change then leads to
\[
\#(\Lambda \cap Q_{h}(x,y,z)) = \frac{h}{\ln a}\cdot \frac{h}{b}\cdot\left(\frac{h}{c}\right)^{2} + \mathcal{O}(h^{3}).
\]
instead of \eqref{eq:estimateQjxyz}. Thus,
\[
\cD^{+}(\Lambda,w) = \lim_{h \to \infty} \frac{h^{4}}{\left(bc^{2}\ln a\right) h^{4}} = \frac{1}{bc^{2}\ln a} = \cD^{-}(\Lambda,w).
\]
The proposition is proved.
\end{proof}


\subsection{Co-Shearlet Systems}

Co-affine systems were introduced in \cite{GLWW03} for general affine systems in arbitrary
dimension by interchanging dilation and translation in the definition of regular affine
systems. Surprisingly, such systems can never form a frame. Here we use the same
concept to introduce co-shearlet systems by the following

\begin{definition}
Let $\psi \in L^{2}(\mathbb{R}^{2})$, $a>1$, and $b, c > 0$. Then we define {\em co-shearlet
systems} to be $\cSH(\psi,\Lambda,w)$, where
\[
\Lambda = \{(a^{j},bk,S_{bk} A_{a^{j}}cm) : j, k \in \ZZ, m \in \ZZ^2\}
\]
and $w \equiv 1$.
\end{definition}

Notice that these systems indeed arise from interchanging dilation (parabolic scaling and shearing)
and translation in the definition of regular shearlet systems, since changing those operations in
\eqref{eq:classicalshearlets} leads to
\[
\{a^{3j/4} \psi(S_{bk} A_{a^j}(\cdot\,-cm)) : j, k \in \ZZ, m \in \ZZ^2\}.
\]

Also here -- similar to the situation of co-affine systems (see \cite{HK03}) --, the
set of parameters associated with co-shearlet systems leads to extreme density values.
This becomes evident by viewing the positioning of the parameters displayed in Figure
\ref{fig:coshearlets}. The very different distribution compared to Figure
\ref{fig:regularshearlets} will extensively be exploited in the proof of the
following result.

\begin{figure}[ht]
\begin{center}
\includegraphics[height=1.4in]{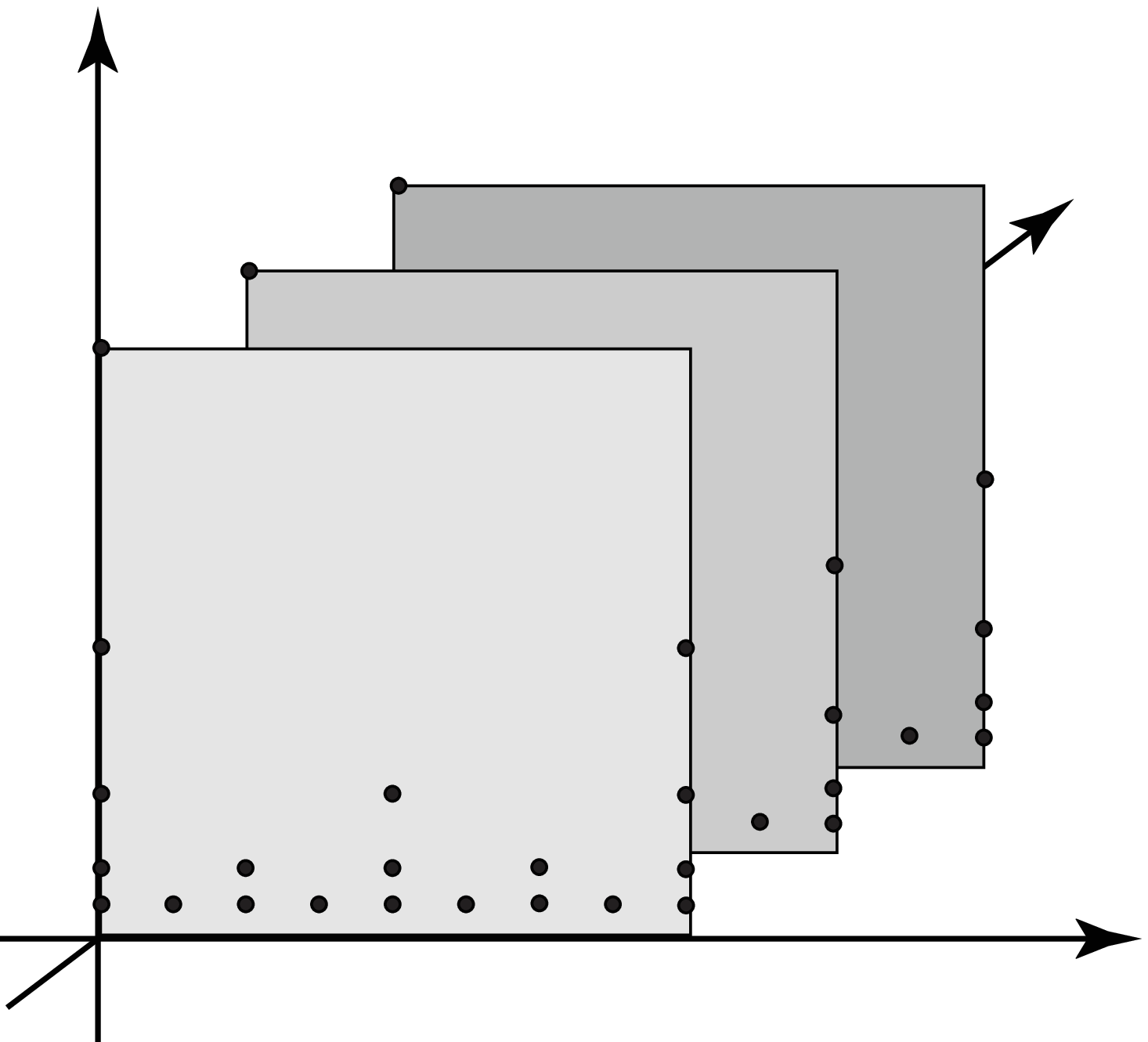}
\put(-8,72){\footnotesize{$s$}}
\put(-110,90){\footnotesize{$a$}}
\put(-10,0){\footnotesize{$t_2$}}
\hspace*{1cm}
\includegraphics[height=1.4in]{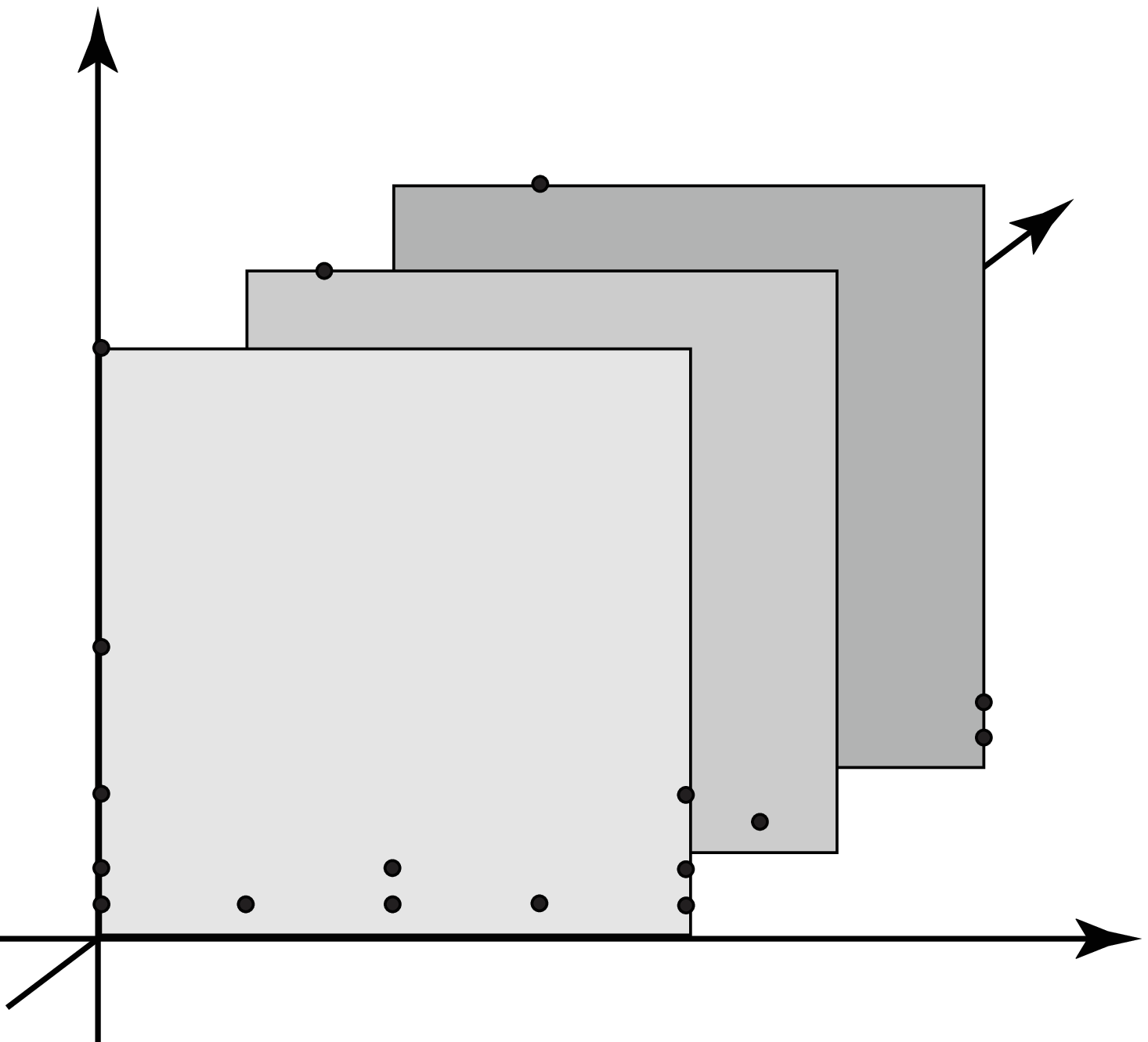}
\put(-8,72){\footnotesize{$t_1$}}
\put(-110,90){\footnotesize{$a$}}
\put(-10,0){\footnotesize{$t_2$}}
\hspace*{1cm}
\includegraphics[height=1.4in]{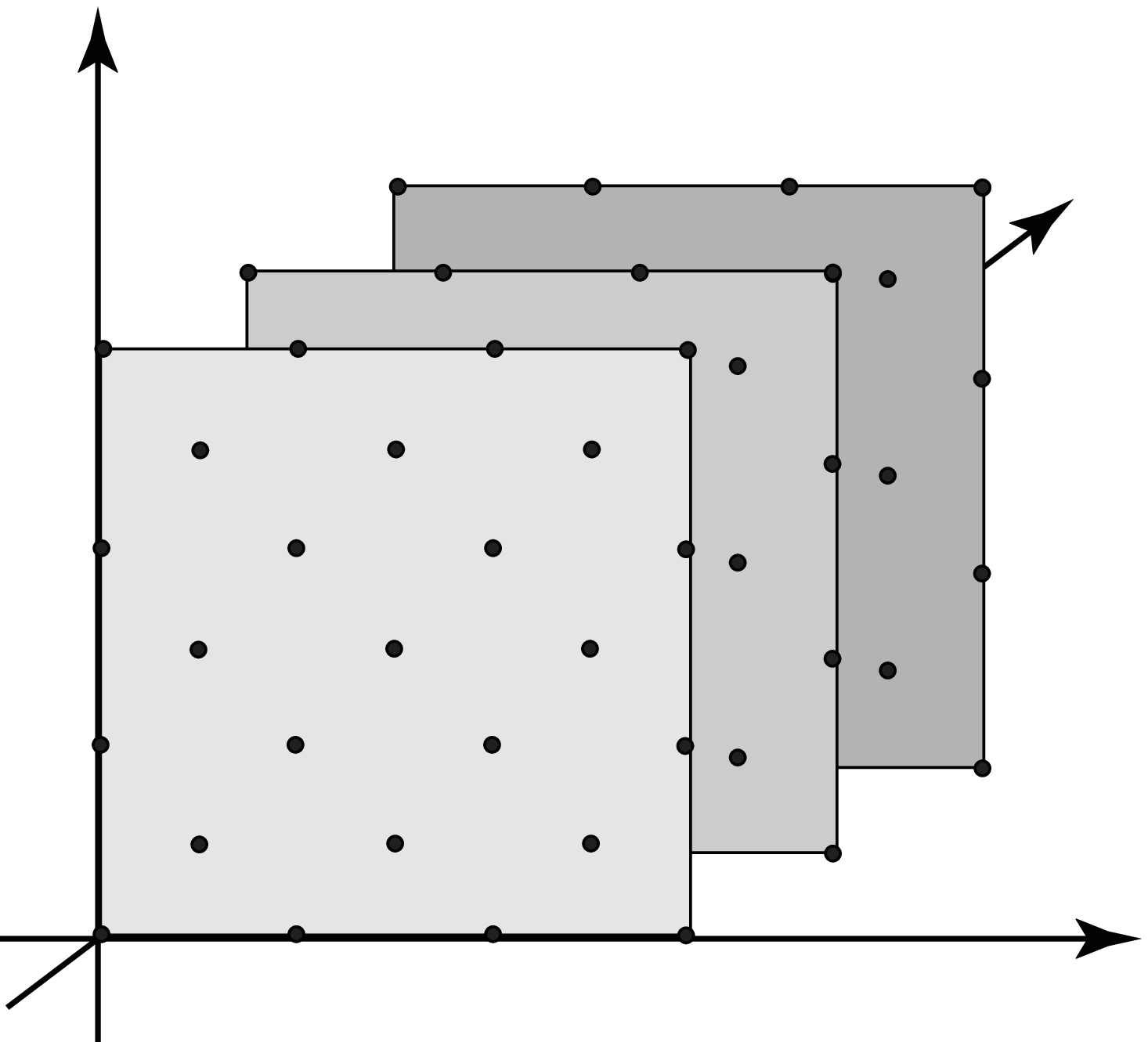}
\put(-8,72){\footnotesize{$s$}}
\put(-110,90){\footnotesize{$t_1$}}
\put(-10,0){\footnotesize{$t_2$}}
\end{center}
\caption{Positioning of the parameters of a co-shearlet system.}
\label{fig:coshearlets}
\end{figure}

\begin{prop}\label{prop:coaff}
Let $\psi \in L^{2}(\mathbb{R}^{2})$, and let $a>1$, and $b,c > 0$.
If $\cSH(\psi,\Lambda,w)$ is the associated co-shearlet system, then
\[
{\cD^{+}(\Lambda,w) =  \infty \quad \mbox{and} \quad \cD^{-}(\Lambda,w) = 0.}
\]
\end{prop}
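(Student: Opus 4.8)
The plan is to exploit the fundamentally different geometry of the co-shearlet parameter set $\Lambda = \{(a^{j},bk,S_{bk}A_{a^{j}}cm)\}$, in which the translation parameters are coupled to the scale and shear, exactly as in the co-affine case treated in \cite{HK03}. The key observation is that applying $(x,y,z)^{-1}$ to a point $(a^{j},bk,S_{bk}A_{a^{j}}cm) \in \Lambda$ produces, in the last (translation) coordinate, an expression of the form $S_{bk}A_{a^{j}}cm - S_{bk}A_{a^{j}}A_{x}^{-1}S_{y}^{-1}z = S_{bk}A_{a^{j}}(cm - A_{x}^{-1}S_{y}^{-1}z)$; that is, the translation part of $(x,y,z)^{-1}\cdot(a^j,bk,\cdot)$ depends on $m$ only through $cm$ pre-multiplied by $S_{bk}A_{a^j}$. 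So the condition that this lie in $[-h/2,h/2)^2$ forces $cm$ into a region whose size \emph{shrinks} like $a^{-j}$ (in one coordinate, $a^{-j/2}$ in the other) as $j\to\infty$, but \emph{grows} like $a^{-j}$ as $j\to-\infty$. This is what produces the degenerate densities.

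First I would establish $\cD^{-}(\Lambda,w) = 0$. Fix any $h>0$ and consider boxes $Q_h(x,y,z)$. For $(a^j,bk,S_{bk}A_{a^j}cm)\in Q_h(x,y,z)$ the scale condition $a^j/x \in [e^{-h/2},e^{h/2})$ restricts $j$ to a window of at most $\tfrac{h}{\ln a}+1$ values, and the shear condition restricts $k$ to a bounded window for each such $j$. For these finitely many $(j,k)$, the translation constraint $S_{bk}A_{a^j}cm - S_{bk}A_{a^j}A_{x}^{-1}S_y^{-1}z \in [-h/2,h/2)^2$, i.e. $cm - A_x^{-1}S_y^{-1}z \in A_{a^j}^{-1}[-h/2,h/2)^2$, confines $cm$ to a box of area $\asymp h^2 a^{-3j/2}$. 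By choosing the center point $(x,y,z)$ with $x = a^j$ large (so that $j$ is forced to be large and positive) and then picking $z$ so that $A_x^{-1}S_y^{-1}z$ avoids the lattice $c\mathbb{Z}^2$ by a fixed margin, we can arrange that \emph{no} lattice point $cm$ falls in this shrinking box for any of the admissible $j$; hence $\#_w(\Lambda\cap Q_h(x,y,z)) = 0$ for suitable centers, giving $\inf_{(x,y,z)}\#_w(\Lambda\cap Q_h(x,y,z)) = 0$ for every $h$, and thus $\cD^-(\Lambda,w) = 0$ directly from the definition (no need for Proposition \ref{prop:equivalent_D+finite_D-positive}).

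Next, for $\cD^{+}(\Lambda,w) = \infty$, I would run the complementary extremal choice: pick centers $(x,y,z)$ with $x = a^{j_0}$ for $j_0\to-\infty$. Then the scale window still contains $\asymp \tfrac{h}{\ln a}$ values of $j$, all very negative, and for each the translation box $A_{a^j}^{-1}[-h/2,h/2)^2$ has area $\asymp h^2 a^{-3j/2} \to \infty$, so it contains $\asymp h^2 a^{-3j/2}/c^2$ lattice points $cm$; multiplying by the number of admissible $k$ (which is $\asymp h/b$, uniformly) and summing the geometric-type sum over the admissible $j$ shows $\#_w(\Lambda\cap Q_h(x,y,z))$ can be made arbitrarily large for fixed $h$ by sending $j_0\to-\infty$. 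Invoking Proposition \ref{prop:equivalent_D+finite_D-positive} (negation of (ii)), this yields $\cD^+(\Lambda,w) = \infty$. The main obstacle is purely bookkeeping: keeping careful track of how the shear $S_{bk}$ interacts with $A_{a^j}$ in the translation coordinate — since $S_{bk}A_{a^j}$ is not diagonal, one must check that the image of the cube $[-h/2,h/2)^2$ under $(S_{bk}A_{a^j})^{-1} = A_{a^j}^{-1}S_{bk}^{-1}$ still has the claimed area scaling $\asymp a^{-3j/2}$ and that its diameter in the relevant coordinates behaves controllably in $k$, so that the counting estimates are genuinely uniform in the admissible shear values; this is the same technical wrinkle that appears throughout the paper whenever the non-abelian structure of $\SS$ enters, and it is handled exactly as in the proof of Proposition \ref{prop:oversampled}(b).
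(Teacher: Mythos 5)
Your proposal is correct and follows essentially the same route as the paper: the same group-law computation showing the translation coordinate of $(x,y,z)^{-1}\cdot(a^j,bk,S_{bk}A_{a^j}cm)$ is $S_{bk}A_{a^j}(cm-A_x^{-1}S_y^{-1}z)$, the same counting of the $j$, $k$, $m$ windows (with the $m$-count scaling like $a^{-3j/2}h^2/c^2$), and the same extremal choices of center ($x$ small for $\cD^+=\infty$, $x$ large for $\cD^-=0$). Your treatment of the lower density is in fact slightly sharper than the paper's, since choosing $z$ so that $A_x^{-1}S_y^{-1}z$ sits half a lattice spacing away from $c\ZZ^2$ yields an exactly zero count, whereas the paper only argues the count becomes small as $x$ grows; both suffice for the stated conclusion.
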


\begin{proof}
First, fix a center point $(x,y,z) \in \mathbb{S}$ and a size $h > 0$, and
consider the box $Q_h(x,y,z)$. If $(a^{j}, bk, cR_{j,k}^{-1}m) \in Q_{h}(x,y,z)$, then
\[
(x,y,z)^{-1} \cdot (a^{j}, bk, S_{\gk{bk}} A_{a^{j}}cm) =
\left( \frac{a^{j}}{x}, bk - \frac{y a^{j/2}}{\sqrt{x}},
S_{\gk{bk}} A_{a^{j}}cm - S_{bk}A_{a^{j}}A_{x}^{-1}S_{y}^{-1}z \right)\in Q_{h}.
\]

Similar as in the proof of Proposition \ref{prop:oversampled}, $\gk{\frac{a^{j}}{x}} \in [e^{-h/2}, e^{h/2})$ implies
\[
\log_{a} x - \frac{h}{2\ln a}  \le j \le \log_{a} x + \frac{h}{2\ln a},
\]
which is satisfied for approximately $\frac{h}{\ln a}$ values of $j$. (Note that we don't take
term $\pm 1$ into account anymore (cf. the proof of Proposition \ref{prop:oversampled}), since
these only contribute a lower power of $h$ when considering the limit.)
Also, $bk - \frac{y a^{j/2}}{\sqrt{x}} \in \left[-\frac{h}{2}, \frac{h}{2} \right)$ yields
\[
\frac{ya^{j/2}}{b\sqrt{x}} - \frac{h}{2b} \le k \le \frac{ya^{j/2}}{b\sqrt{x}} + \frac{h}{2b},
\]
which, for a fixed value of $j$, is fulfilled for approximately $\frac{h}{b}$
values of $k$.

However, $S_{bk}A_{a^{j}}cm - S_{bk}A_{a^{j}}A_{x}^{-1}S_{y}^{-1} z = \gamma \in \left[-\frac{h}{2},
\frac{h}{2} \right)^{2}$ implies
\[
m = \frac{1}{c} \Big[ A_{a^{-j}}S_{-bk}\gamma + \underbrace{A_{x}^{-1}S_{y}^{-1} z}_{=:(C_{1},C_{2})^{T}} \Big],
\]
which can be rewritten as
\[
m_{1} = C_{1} + \frac{a^{-j}\gamma_{1}}{c}- \frac{bka^{-j}\gamma_{2}}{c}
\quad \mbox{and} \quad
m_{2} = C_{2} + \frac{a^{-j/2}}{c}\gamma_{2},
\]
where $\gamma_{1}, \gamma_{2} \in \left[-\frac{h}{2}, \frac{h}{2} \right)$. Thus,
\begin{eqnarray} \label{eq:mmm1}
C_{1} - a^{-j}(1+b|k|)\frac{h}{2c} \le & m_{1} & \le C_{1}+ a^{-j}(1+b|k|)\frac{h}{2c},\\
C_{2} - \frac{a^{-j/2}h}{2c} \le & m_{2} & \le C_{2} + \frac{a^{-j/2}h}{2c}. \label{eq:mmm2}
\end{eqnarray}
For fixed $j$ and $k$, \eqref{eq:mmm1} is satisfied by approximately $a^{-j}(1+b|k|)h/c$ values of
$m_{1}$, and, for fixed $j$, \eqref{eq:mmm2} is satisfied by approximately $a^{-j/2}h/c$ values of
$m_{2}$.

Summarizing our findings,
\[
\#(\Lambda \cap Q_{h}(x,y,z))
= \sum_{j = \lceil \log_{a} x - \frac{h}{4\ln a}\rceil}^{\lfloor \log_{a} x + \frac{h}{4\ln a}\rfloor}
\sum_{k = \lceil \frac{ya^{j/2}}{b\sqrt{x}} - \frac{h}{2b}\rceil}^{\lfloor \frac{ya^{j/2}}{b\sqrt{x}} + \frac{h}{2b}\rfloor}
\Bigl(\lfloor a^{-j}(1+b|k|)\frac{h}{c}\rfloor+ 1\Bigr) \Bigl(\lfloor\frac{a^{-j/2}h}{2c} \rfloor+ 1\Bigr).
\]
By changing $x$, we can make this quantity arbitrary small or large. Figure
\ref{fig:coshearlets} displays this behavior: As $x$ moves towards the $t_{2}$-axis,
this quantity becomes arbitrarily large, and as $x$ moves away from the
$t_{2}$-axis, this quantity becomes arbitrarily small.

Hence we can conclude that $\cD^{+}(\Lambda,w)= \infty$ and $\cD^{-}(\Lambda,w) = 0$.
\end{proof}

Application of Theorem \ref{theo:main} now enables us to conclude the non-existence of
frame bounds for a co-shearlet system from this proposition.

\begin{corollary}
Let $\psi \in L^{2}(\mathbb{R}^{2})$, let $a>1$, and $b,c > 0$, and let $\cSH(\psi,\Lambda,w)$ be
the associated co-shearlet system. Then $\cSH(\psi,\Lambda,w)$ does not possess an upper frame
bound. In particular, the system  $\cSH(\psi,\Lambda,w)$ can not be a frame.
\end{corollary}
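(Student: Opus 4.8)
The plan is to simply combine Proposition \ref{prop:coaff} with Theorem \ref{theo:main}(i) via the contrapositive. Proposition \ref{prop:coaff} establishes that for any co-shearlet system, the associated set of parameters $\Lambda$ with weights $w \equiv 1$ satisfies $\cD^{+}(\Lambda,w) = \infty$. Theorem \ref{theo:main}(i) states that if $\cSH(\psi,\Lambda,w)$ possesses an upper frame bound for $L^{2}(\R^{2})$, then $\cD^{+}(\Lambda,w) < \infty$. Taking the contrapositive of this implication: if $\cD^{+}(\Lambda,w) = \infty$, then $\cSH(\psi,\Lambda,w)$ cannot possess an upper frame bound. Since $\psi \in L^2(\RR^2)$ is arbitrary here, this applies to every co-shearlet system regardless of the generator.

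The second assertion — that $\cSH(\psi,\Lambda,w)$ cannot be a frame — follows immediately, since by definition a frame must have both a lower and an upper frame bound. Absence of an upper frame bound therefore precludes the frame property.

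I would write this out in two short sentences: first invoke Proposition \ref{prop:coaff} to record $\cD^{+}(\Lambda,w) = \infty$; then apply (the contrapositive of) Theorem \ref{theo:main}(i) to deduce the non-existence of an upper frame bound; and finally remark that a system without an upper frame bound cannot be a frame. There is no real obstacle here — the entire content of the corollary has been front-loaded into Proposition \ref{prop:coaff} and Theorem \ref{theo:main}, so the proof is purely a matter of chaining the two results together. If anything, the only point worth a word of care is noting that Theorem \ref{theo:main}(i) is stated for a general weight function $w : \SS \to \RR^+$, so the case $w \equiv 1$ used in the definition of co-shearlet systems is covered.

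\begin{proof}
By Proposition \ref{prop:coaff}, we have $\cD^{+}(\Lambda,w) = \infty$. Suppose, for contradiction, that $\cSH(\psi,\Lambda,w)$ possesses an upper frame bound for $L^{2}(\R^{2})$. Then Theorem \ref{theo:main}(i) would imply $\cD^{+}(\Lambda,w) < \infty$, a contradiction. Hence $\cSH(\psi,\Lambda,w)$ does not possess an upper frame bound. Since every frame must admit an upper frame bound, it follows in particular that $\cSH(\psi,\Lambda,w)$ cannot be a frame for $L^{2}(\R^{2})$.
\end{proof}
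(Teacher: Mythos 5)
Your proposal is correct and follows exactly the paper's own argument: invoke Proposition \ref{prop:coaff} to get $\cD^{+}(\Lambda,w)=\infty$, then apply (the contrapositive of) Theorem \ref{theo:main}(i) to rule out an upper frame bound, hence the frame property. No issues.
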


\begin{proof}
By Proposition \ref{prop:coaff}, $\cD^{+}(\Lambda,w)= \infty$ and $\cD^{-}(\Lambda,w) = 0$.
Since $\cD^{+}(\Lambda,w)= \infty$, Theorem \ref{theo:main} (i) implies that $\cSH(\psi,\Lambda,w)$ does
not possess an upper frame bound.
\end{proof}


\subsection{Discussion of Density Results for $\tilde{\SS}$}
\label{subsec:other_groups}

Let us now briefly discuss the potential for the shearlet group $\tilde{\SS}$ to
lead to `nice' density properties. In the wavelet case, it was shown in \cite{Kut07a} that
in fact the choice of affine group multiplication has a strong impact on properties
of the density induced by it. Two versions of the affine group were studied, and it
was shown that one did not lead to a uniform density for the regular wavelet systems
as the other did.

To start our discussion, we first observe that using $\tilde{\SS}$ instead of $\SS$,
the notion of upper and lower weighted shearlet density, $\tilde{\cD}^\pm$, say, can
be introduced in a similar way as in Definition \ref{def:dens1}, however using a
left-invariant Haar measure of $\tilde{\SS}$ instead of $\mu_\SS$. With this definition,
all results from Section \ref{sec:geometry}  -- with different constants -- can be proven
using quite similar proofs as the ones presented.

By using Lemma \ref{lem:relation_SH_tildeSH}, it can easily be computed that the set
\[
\tilde{\Lambda} = \{(\,a^{-j},-bk{a}^{-\frac{j}{2}},cS_{-bk\sqrt{a}^{-{j}}}A_{a^{-j}}m \,):\, j,k \in \Z, \, m \in \Z^2\},
\]
 $a>1$,
$b, c > 0$, is a set of parameters such that $\widetilde{\cSH}(\psi,\tilde{\Lambda})$
is a regular shearlet system. Using similar arguments as in the
proof of Propositions \ref{prop:oversampled} and \ref{prop:coaff}, we can conclude
that in fact regular shearlet systems $\widetilde{\cSH}(\psi,\tilde{\Lambda})$ are
{\em not} associated with a uniform density using $\tilde{\cD}^\pm$.

This justifies our choice of $\SS$.


\section{Proofs}
\label{sec:proofs}

\subsection{Proofs of Results from Section \ref{sec:geometry}}

\subsubsection{Proof of Lemma \ref{lem:covering}}
\label{subsubsec:box}

(i). Fix any $(x, y, z) \in \mathbb{S}$. It suffices to prove the existence of some
$(a, s, t) \in Q_{h}$, and $j, k \in \mathbb{Z}$, $ m \in
\mathbb{Z}^{2}$ such that
\begin{eqnarray*}
(x, y, z) & = & (ae^{jh}, s+he^{-h/4}k\sqrt{a}, t+ h e^{-h/2}S_{s}A_{a}m) \\
& = & (e^{jh}, he^{-h/4}k, he^{-h/2}m) \cdot (a, s, t) \in Q_{h}(e^{jh}, he^{-h/4}k, he^{-h/2}m).
\end{eqnarray*}
The three desired equalities
\[
x = ae^{jh}, \quad y = s+he^{-h/4}k\sqrt{a} \quad \mbox{and} \quad z =  t+he^{-h/2}S_{s}A_{a}m
\]
are equivalent to
\begin{eqnarray}
j & = & \frac{\ln x}{h} - \frac{\ln a}{h}, \label{eq:jj1}\\
k & = & \frac{ye^{h/4}}{h\sqrt{a}} - \frac{se^{h/4}}{h\sqrt{a}}, \label{eq:kk1}\\
m_{1} & = & \frac{(z_{1}-t_{1})e^{h/2}}{ha} - \frac{s(z_{2}-t_{2})e^{h/2}}{ha},\label{eq:mm1} \\
m_{2} & = & \frac{(z_{2}-t_{2})e^{h/2}}{h\sqrt{a}}. \label{eq:mm2}
\end{eqnarray}
We now show how to construct $(a, s, t) \in Q_{h}$, and $j, k \in \mathbb{Z}$, $ m \in \mathbb{Z}^{2}$
satisfying those.

Let $j$ be the unique integer contained in the interval $\left[\frac{\ln x}{h} - \frac{1}{2}, \frac{\ln x}{h} + \frac{1}{2} \right)$,
and set $a = x e^{-jh}$. Then $a \in [e^{-h/2}, e^{h/2})$ and \eqref{eq:jj1}
is satisfied. Next, let $k$ be an integer contained in the interval $\left[\frac{ye^{h/4}}{h\sqrt{a}} - \frac{e^{h/4}}{2\sqrt{a}},
\frac{ye^{h/4}}{h\sqrt{a}} + \frac{e^{h/4}}{2\sqrt{a}} \right)$, and set $s = y -he^{-h/4}k\sqrt{a}$. By choice of
$k$, we have $s \in [-\frac{h}{2}, \frac{h}{2})$ and hence \eqref{eq:kk1} is fulfilled. Further, let $m_2$ be an integer
contained in $\left[\frac{z_{2}e^{h/2}}{h\sqrt{a}} - \frac{e^{h/2}}{2\sqrt{a}}, \frac{z_{2}e^{h/2}}{h\sqrt{a}} +
\frac{e^{h/2}}{2\sqrt{a}}\right)$, and define $t_2$ by $t_2 = z_2-m_2h\sqrt{a}e^{-h/2}$. Again, $t_2 \in [-\frac{h}{2}, \frac{h}{2})$,
and we have established \eqref{eq:mm2}. Finally, we choose the integer $m_1$ to be contained in the interval
$\left[\frac{z_{1}e^{h/2}}{ha}-\frac{sz_{2}e^{h/2}}{ha} +\frac{st_{2}e^{h/2}}{ha}-\frac{e^{h/2}}{2a},\right.$
$\left.\frac{z_{1}e^{h/2}}{ha}-\frac{sz_{2}e^{h/2}}{ha}+\frac{st_{2}e^{h/2}}{ha} +\frac{e^{h/2}}{2a} \right)$,
set $t_1 = z_1- m_1hae^{-h/2}+s(z_{2}-t_{2})$, and conclude that $t_1 \in [-\frac{h}{2}, \frac{h}{2})$.
Thus, also \eqref{eq:mm1} holds.

Concluding, $\{Q_{h}(e^{jh}, he^{-h/4}k, he^{-h/2}m) : j, k \in \mathbb{Z}, m
\in \mathbb{Z}^{2}\}$ is a covering of $\mathbb{S}$, i.e., $X$ is $Q_{h}$-dense in $\mathbb{S}$.

(ii). Fix any $(x, y, z) \in \mathbb{S}$, and let $(u, v, w) \in Q_{rh}(x,y,z) \cap Q_{h}(e^{jh}, he^{-h/4}k, he^{-h/2}m)$.
This implies that there exist $(a, s, t) \in Q_{rh}$ and $(a', s', t') \in Q_{h}$ such that
\begin{eqnarray*}
(u, v, w) & = & (x, y, z) \cdot (a, s,t) \\
& = & (ax, s + y \sqrt{a}, t + S_{s}A_{a}z) \in Q_{rh}(x, y, z)
\end{eqnarray*}
and
\begin{eqnarray*}
(u, v, w) & = & (e^{jh}, he^{-h/4}k, he^{-h/2}m) \cdot (a', s', t') \\
& = & (a'e^{jh}, s'+he^{-h/4}k\sqrt{a'}, t'+ he^{-h/2}S_{s'}A_{a'}m) \in Q_{h}(e^{jh}, he^{-h/4}k, he^{-h/2}m).
\end{eqnarray*}

From $ax = a'e^{jh}$ with $a \in [e^{-rh/2}, e^{rh/2})$ and $a' \in [e^{-h/2}, e^{h/2})$, it follows that
\beq \label{eq:condja1}
xe^{-h(r+1)/2} \le e^{jh} \le xe^{h(r+1)/2},
\eeq
and hence
\[
\frac{\ln x}{h} - \frac{r+1}{2} \le j \le \frac{\ln x}{h} + \frac{r+1}{2}.
\]
This is satisfied for at most $r+2$ values of $j$.

Next, $k = \frac{(s-s')e^{h/4}}{h\sqrt{a'}}+ \frac{y\sqrt{a}e^{h/4}}{h\sqrt{a'}}$ with $s \in [-\frac{rh}{2},
\frac{rh}{2})$ and $s' \in [-\frac{h}{2}, \frac{h}{2})$ implies that
\[
\frac{y\sqrt{a}e^{h/4}}{h\sqrt{a'}} - \left(\frac{r+1}{2}
\right)e^{h/2} \le k \le \frac{y\sqrt{a}e^{h/4}}{h\sqrt{a'}} +
\left(\frac{r+1}{2} \right)e^{h/2}. \label{sumk1}
\]
For fixed values of $a \in [e^{-rh/2}, e^{rh/2})$ and $a' \in [e^{-h/2}, e^{h/2})$, this is satisfied
for at most $(r+1)e^{h/2}+1$ values of $k$.

Finally, we need to study the equation
\[
he^{-h/2}m  = \underbrace{A_{a'}^{-1}S_{s'}^{-1}S_{s}A_{a} z}_{=: (C_{1},C_{2})^{T}} + A_{a'}^{-1}S_{s'}^{-1}(t - t'),
\]
which is
\[
\left(\begin{array}{c} he^{-h/2}m_{1} \\ he^{-h/2}m_{2}\end{array}\right)
= \left( \begin{array}{c} C_{1} + \frac{(t_{1}-t'_{1})}{a'} - \frac{s'(t_{2}-t'_{2})}{a'} \\
C_{2}+\frac{(t_{2}-t'_{2})}{\sqrt{a'}}\end{array}\right),
\]
with $t_{1}, t_{2} \in \left[-\frac{rh}{2}, \frac{rh}{2}\right)$ and $t_{1}', t_{2}' \in \left[-\frac{h}{2}, \frac{h}{2}\right)$.
This yields
\begin{eqnarray} \label{sum_m11}
\frac{C_{1}e^{h/2}}{h} -\frac{s'e^{h/2}(t_{2}-t'_{2})}{a'h}- \frac{(r+1)e^{h}}{2} \le
& m_{1} & \le \frac{C_{1}e^{h/2}}{h} -\frac{s'e^{h/2}(t_{2}-t'_{2})}{a'h}+ \frac{(r+1)e^{h}}{2} \\
\frac{C_{2}e^{h/2}}{h} - \frac{(r+1)e^{3h/4}}{2} \le
& m_{2} & \le \frac{C_{2}e^{h/2}}{h} + \frac{(r+1)e^{3h/4}}{2}. \label{sum_m21}
\end{eqnarray}
For fixed values of $a \in [e^{-rh/2}, e^{rh/2})$, $a' \in [e^{-h/2}, e^{h/2})$, $s \in [-\frac{rh}{2}, \frac{rh}{2})$,
and $s' \in [-\frac{h}{2}, \frac{h}{2})$, inequality \eqref{sum_m11} is satisfied for at most $(r+1)e^{h} + 1$ values
of $m_{1}$, and inequality \eqref{sum_m21} is satisfied for at most $(r+1)e^{3h/4} + 1$ values of $m_{2}$.

Summarizing, the box $Q_{rh}(x, y, z)$ can intersect at most
\[
(r+2)(r+1)^{3}\left[e^{h/2}+\frac{1}{r+1}\right]\left[e^{h}+\frac{1}{r+1}\right] \left[e^{3h/4}+\frac{1}{r+1}\right]
\]
sets of the form $Q_{h}(e^{jh}, he^{-h/4}k, he^{-h/2}m)$.

(iii). We first observe that there exist at least $r$ values of $j$ which satisfy \eqref{eq:condja1}.
Further, for fixed value of $j$, there exist at least $(r+1)e^{h/2}$ values of $k$ which satisfy
\eqref{sumk1}. Finally, for fixed values of $j$ and $k$, inequality \eqref{sum_m11} is satisfied by at
least $(r+1)e^{h}$ values of $m_{1}$, and, for fixed value of $j$, inequality(\ref{sum_m21}) is
satisfied by at least $(r+1)e^{3h/4}$ values of $m_{2}$. Thus $Q_{rh}(x, y, z)$ intersects at least
$ r(r+1)^{3}e^{9h/4}$ sets of the form $Q_{h}(e^{jh}, he^{-h/4}k, he^{-h/2}m)$.

\newpage

\subsection{Proofs of Results from Section \ref{sec:B0}}

\subsubsection{Proof of Theorem \ref{theo:b0}}
\label{subsubsec:b0}


Suppose that $f, \psi \in \mathcal{B}_{0}$.
\wq{We will first start with a general decay estimate for $\mathcal{SH}_{\psi}f$, which will be exploited
frequently throughout the proof}. By Lemmata \ref{lem:decay1} and \ref{lem:decay2}, there exists $C_{1}>0$ such that, for all $(a,s,t) \in \mathbb{S}$,
\begin{equation}\label{eq:sh2}
|\mathcal{SH}_{\psi}f(a,s,t)|^{2}
\le C_{1}a^{3/2}\frac{\max\{1,d^{2}\}}{\left[1+\left\|\frac{A_{a}^{-1}S_{s}^{-1}t}{\max \{1,d\}}\right\|_{\infty}^{2}\right]^{\alpha - 1/2}}
\cdot \frac{a^{3\beta/2}}{(1+a^{2})^{\beta}(\sqrt{a}+|s|)^{\beta}}
\end{equation}
where $\beta > 4\alpha +\gk{2}$ and $\alpha > \frac{3}{2}$ and
$d^{2} = \gk{(2+\frac{s^2}{a}) \cdot} \max \left\{ \frac{1}{a^{2}}, \frac{1}{a}\right\}$.

Consider the sum
\beq \label{eq:sumfinite}
\sum_{j\in \Z}\sum_{k \in \Z } \sum_{m \in \Z^{2}} \left\| \mathcal{SH}_{\psi}f \cdot
\chi_{Q_{1}(e^{j},ke^{-1/4}, e^{-1/2}m)}\right\|_{\infty}.
\eeq
It is sufficient to prove that this sum is finite, which implies $\mathcal{SH}_{\psi}f \in W_{\mathbb{S}}(L^{\infty}, L^{1})$.
Since $\mathcal{SH}_{\psi}f$ is continuous, the proof is then complete.

To prove finiteness of \eqref{eq:sumfinite}, we will carefully split this sum into four subsums, and show
finiteness of each one of those. To find a suitable `splitting point' -- and also to
derive estimates which will become useful later --, for some $j,k \in \Z$ and $m \in \Z^{2}$, we consider an element
$(a,s,t) \in Q_{1}(e^{j},ke^{-1/4}, e^{-1/2}m)$.
Then
\[
(a,s,t) = (e^{j}, ke^{-1/4}, e^{-1/2}m)\cdot (x,y,z) = (xe^{j}, y+ke^{-1/4}\sqrt{x}, z+ e^{-1/2}S_{y}A_{x}m),
\]
for some $(x,y,z) \in Q_{1} = [e^{-1/2},e^{1/2})\times \left[-\frac{1}{2}, \frac{1}{2}\right)\times \left[-\frac{1}{2},
\frac{1}{2}\right)^{2}$. This implies
\begin{itemize}
\item[($C_1$)] $e^{j-1/2} \le a \le e^{j+1/2}$,
\item[($C_2$)] $\frac{|k|}{\sqrt{e}}-\frac{1}{2} \le |s| \le |k|+\frac{1}{2}$,
\item[($C_3$)] $ e^{-1/2} \|S_{y}A_{x}m\|_{\infty} - \frac{1}{2} \le \|t\|_{\infty} \le e^{-1/2} \|S_{y}A_{x}m\|_{\infty} + \frac{1}{2}$.
\end{itemize}
From ($C_3$), we conclude that
\[
\|t\|_{\infty}
\ge e^{-1/2} \|S_{y}A_{x}m\|_{\infty} - \frac{1}{2}
\ge e^{-1/2} \frac{\|m\|_{\infty}}{\|S_{y}^{-1}\|_{\infty}\|A_{x}^{-1}\|_{\infty}} - \frac{1}{2}.
\]
Since $\|S_{y}^{-1}\|_{\infty}\|A_{x}^{-1}\|_{\infty} = (1+|y|)\max \left\{\frac{1}{x},\frac{1}{\sqrt{x}} \right\}$,
we conclude for any $\|m\|_{\infty} > \frac{3e}{2}$,
\beq \label{eq:estimatenormt}
\|t\|_{\infty} \ge \frac{2\|m\|_{\infty}}{3e} - \frac{1}{2} \ge \frac{\|m\|_{\infty}}{3e}.
\eeq

This delivers our `splitting point', and we decompose the sum \eqref{eq:sumfinite} according to
\[
\sum_{j\in \Z}\sum_{k \in \Z } \sum_{m \in \Z^{2}} \left\| \mathcal{SH}_{\psi}f \cdot
\chi_{Q_{1}(e^{j},ke^{-1/4}, e^{-1/2}m)}\right\|_{\infty}  = T_1 + T_2 + T_3 + T_4,
\]
where
{\allowdisplaybreaks
\begin{eqnarray*}
T_1 & = & \sum_{j = -\infty}^{0} \sum_{k \in \Z } \sum_{\{m\in \Z^{2}: \|m\|_{\infty}>\frac{3e}{2}\}}
\left\| \mathcal{SH}_{\psi}f \cdot \chi_{Q_{1}(e^{j},ke^{-1/4}, e^{-1/2}m)}\right\|_{\infty},\\
T_2 & = & \sum_{j = 1}^{\infty} \sum_{k \in \Z} \sum_{\{m\in \Z^{2}: \|m\|_{\infty}>\frac{3e}{2} \}}
\left\| \mathcal{SH}_{\psi}f \cdot \chi_{Q_{1}(e^{j},ke^{-1/4}, e^{-1/2}m)}\right\|_{\infty},\\
T_3 & = & \sum_{j = -\infty}^{0} \sum_{k \in \Z} \sum_{\{ m\in \Z^{2}: \|m\|_{\infty}\le\frac{3e}{2}\}}
\left\| \mathcal{SH}_{\psi}f \cdot \chi_{Q_{1}(e^{j},ke^{-1/4}, e^{-1/2}m)}\right\|_{\infty},\\
T_4 & = & \sum_{j = 1}^{\infty} \sum_{k \in \Z} \sum_{\{ m\in \Z^{2}: \|m\|_{\infty}\le \frac{3e}{2}\}}
\left\| \mathcal{SH}_{\psi}f \cdot \chi_{Q_{1}(e^{j},ke^{-1/4}, e^{-1/2}m)}\right\|_{\infty}.
\end{eqnarray*}
}

We now prove that each of the sums $T_1$--$T_4$ is finite.

$T_1$. Suppose that $(a,s,t) \in Q_{1}(e^{j},ke^{-1/4}, e^{-1/2}m)$ with $j\le 0$, $k \in \Z$, and
$m \in \Z^{2}$, $\|m\|_{\infty} > \frac{3e}{2}$. First assume that $j < 0$, hence $a \le 1$. Then
\[
d^{2} = \gk{\left(2+\frac{s^2}{a}\right)} \max \left\{\frac{1}{a^{2}}, \frac{1}{a}\right\} = \gk{\frac{2a +
s^2}{a^3}} > 1,
\]
and hence, also using \eqref{eq:estimatenormt},
\[
1+ \left\| \frac{A_{a}^{-1}S_{s}^{-1}t}{\max\{1,d\}}\right\|_{\infty}^{2}
\ge 1 + \frac{\left\|t\right\|_{\infty}^{2}}{d^{2} \|A_{a}\|_{\infty}^{2} \|S_{s}\|_{\infty}^{2}}
\ge 1 + \frac{a^{3}\left\|m\right\|_{\infty}^{2}}{9e^{2}\gk{(2a+s^2)}a(1+|s|)^{2}}
\ge 1+\frac{a^{2}\|m\|_{\infty}^{2}}{\gk{18}e^{2}(1+|s|)^{4}}.
\]
This together with \eqref{eq:sh2} and ($C_2$) implies
{\allowdisplaybreaks
\begin{eqnarray*}
|\mathcal{SH}_{\psi}f(a,s,t)|^{2}
& \le & C_{1}a^{3/2} \frac{\gk{2}(\sqrt{a}+|s|)^{2}}{a^{3}\left[1+ \frac{a^{2} \|m\|_{\infty}^{2}}{\gk{18}e^{2}(1+|s|)^{4}}\right]^{\alpha -1/2}}
\cdot \frac{a^{3\beta/2}}{(1+a^{2})^{\beta}(\sqrt{a}+|s|)^{\beta}} \\
& \le & C_{1}a^{3/2} \frac{(1+|s|)^{4\alpha -2}}{a^{2\alpha+2}\left[\frac{\gk{18}e^{2}(1+|s|)^{4}}{a^{2}}+ \|m\|_{\infty}^{2}\right]^{\alpha -1/2}}
\cdot \frac{a^{3\beta/2}}{a^{\frac{\beta}{2}-1}\left(1+\frac{|s|}{\sqrt{a}}\right)^{\beta-2}}\\
& \le & C_{1} \frac{a^{\beta-2\alpha + \frac{1}{2}}}{\|m\|_{\infty}^{2\alpha-1}}\cdot \frac{1}{(1+|s|)^{\beta-4\alpha}}\\
& \le & C_{1}\frac{e^{(\beta-2\alpha + \frac{1}{2})j/2}}{\|m\|_{\infty}^{2\alpha-1}}\cdot \frac{1}{(\sqrt{e}+2|k|)^{\beta-4\alpha}}.
\end{eqnarray*}
}
Hence
\[
|\mathcal{SH}_{\psi}f(a,s,t)|
\le C_{1}\frac{e^{(\beta-2\alpha + \frac{1}{2})j/4}}{\|m\|_{\infty}^{\alpha-\frac{1}{2}}}\cdot \frac{1}{(\sqrt{e}+2|k|)^{(\beta-4\alpha)/2}},
\]
and we obtain
{\allowdisplaybreaks
\begin{eqnarray}\nonumber
\lefteqn{\sum_{j = -\infty}^{-1} \sum_{k \in \Z} \sum_{\{m\in \Z^{2}: \|m\|_{\infty}>\frac{3e}{2}\}}
\left\| \wq{\mathcal{SH}_{\psi}f} \cdot \gk{\chi_{Q_{1}(e^{j},ke^{-1/4}, e^{-1/2}m)}} \right\|_{\infty}}\\ \label{eq:lastnumber1}
& \le & C_{1} \sum_{j = -\infty}^{0} e^{(\beta-2\alpha + \frac{1}{2})j/4} \sum_{k \in \Z}\frac{1}{(\sqrt{e}+2|k|)^{(\beta-4\alpha)/2}}
\sum_{\{m\in \Z^{2}: \|m\|_{\infty} > \frac{3e}{2}\}} \|m\|_{\infty}^{-(\alpha-\frac{1}{2})},
\end{eqnarray}
}
which is finite.

If $j=0$, then $1 < a \le \sqrt{e}$, and it is easy to see that, by \eqref{eq:sh2} and ($C_2$),
\[
|\mathcal{SH}_{\psi}f(a,s,t)|
\le \frac{C_1}{\|m\|_{\infty}^{\alpha-\frac{1}{2}}}\cdot \frac{1}{(\sqrt{e}+2|k|)^{(\beta-4\alpha)/2}},
\]
which implies
\beq \label{eq:lastnumber2}
\sum_{k \in \Z} \sum_{\{m\in \Z^{2}: \|m\|_{\infty}>\frac{3e}{2}\}}
\left\| \wq{\mathcal{SH}_{\psi}f} \cdot {\chi_{Q_{1}(1,ke^{-1/4}, e^{-1/2}m)}} \right\|_{\infty} < \infty.
\eeq
Hence, from \eqref{eq:lastnumber1} and \eqref{eq:lastnumber2}, it follows that $T_1$ is finite.

$T_2$. Suppose that $(a,s,t) \in Q_{1}(e^{j},ke^{-1/4}, e^{-1/2}m)$ with $j>0, k \in \Z$, $m \in \Z^{2}$, and
$m \in \Z^{2}$, $\|m\|_{\infty} > \frac{3e}{2}$. Then $a >1$ and $d^{2} = \gk{\frac{2a + s^2}{a^2}}$.

Now we distinguish two cases:
If $d^{2} > 1$, then
{\allowdisplaybreaks
\begin{eqnarray*}
1+ \left\| \frac{A_{a}^{-1}S_{s}^{-1}t}{\max\{1,d\}}\right\|_{\infty}^{2}
& \ge & 1 + \frac{\left\|t\right\|^{2}_\infty}{d^{2} \|A_{a}\|_{\infty}^{2} \|S_{s}\|_{\infty}^{2}} \ge 1 +
\frac{a^{2}\left\|m\right\|^{2}_\infty}{9e^{2}\gk{(2a+s^2)}a^{2}(1+|s|)^{2}}\\
& \ge & 1+\frac{\|m\|_{\infty}^{2}}{\gk{18}e^{2}(\sqrt{a}+|s|)^{4}}.
\end{eqnarray*}
}
By \eqref{eq:sh2}, we hence obtain
{\allowdisplaybreaks
\begin{eqnarray*}
|\mathcal{SH}_{\psi}f(a,s,t)|^{2}
& \le & C_{1} a^{3/2} \frac{\gk{2}(\sqrt{a}+|s|)^{2}}{a^{2}\left[1+ \frac{\|m\|_{\infty}^{2}}{\gk{18}e^{2}(\sqrt{a}+|s|)^{4}}\right]^{\alpha -1/2}}
\cdot \frac{a^{3\beta/2}}{(1+a^{2})^{\beta}(\sqrt{a}+|s|)^{\beta}}\\
& \le & C_{1} \frac{a^{-(\beta+1)/2}(\sqrt{a}+|s|)^{4\alpha}}{\left[\gk{18}e^{2}(\sqrt{a}+|s|)^{4}+\|m\|_{\infty}^{2}\right]^{\alpha -1/2}}
\cdot \frac{1}{(\sqrt{a}+|s|)^{\beta}}\\
& \le & C_{1}  \frac{a^{-(\beta+1)/2}}{\|m\|_{\infty}^{2\alpha-1}} \cdot \frac{1}{(1+|s|)^{\beta-4\alpha}}\\
& \le & C_{1}\frac{e^{-(\beta+1)j/2}}{\|m\|_{\infty}^{2\alpha-1}}\cdot\frac{1}{(\sqrt{e}+2|k|)^{\beta-4\alpha}}.
\end{eqnarray*}
}
This yields
{\allowdisplaybreaks
\begin{eqnarray*}
T_2
& = & \sum_{j = 1}^{\infty} \sum_{k \in \Z}\sum_{m\in \Z^{2}, \|m\|_{\infty}>\frac{3e}{2}}
\left\| \wq{\mathcal{SH}_{\psi}f} \cdot\gk{\chi_{Q_{1}(e^{j},ke^{-1/4}, e^{-1/2}m)}}\right\|_{\infty}\\
& \le & C_{1} \sum_{j = 1}^{\infty} e^{-(\beta+1)j/4}\sum_{k \in \Z}\frac{1}{(\sqrt{e}+2|k|)^{(\beta-4\alpha)/2}}
\sum_{m\in \Z^{2}} \|m\|_{\infty}^{-(\alpha-\frac{1}{2})},
\end{eqnarray*}
}
hence $T_2$ is finite.

If $d^{2} \le 1$, then
\[
1+ \left\|\frac{A_{a}^{-1}S_{s}^{-1}t}{\max\{1,d\}}\right\|_{\infty}^{2}
\ge 1 + \frac{\left\|t\right\|^{2}_\infty}{a^{2}(1+|s|)^{2}}
\ge 1+\frac{\|m\|_{\infty}^{2}}{9e^{2}a^{2}(\sqrt{a}+|s|)^{2}}.
\]
By \eqref{eq:sh2}, we hence obtain
{\allowdisplaybreaks
\begin{eqnarray*}
|\mathcal{SH}_{\psi}f(a,s,t)|^{2}
& \le & C_{1}a^{3/2}\frac{1}{\left[1+ \frac{\|m\|_{\infty}^{2}}{9e^{2}a^{2}(\sqrt{a}+|s|)^{4}}\right]^{\alpha -1/2}}
\cdot\frac{a^{3\beta/2}}{(1+a^{2})^{\beta}(\sqrt{a}+|s|)^{\beta}}\\
& \le & C_{1}\frac{a^{-(\beta-\gk{2}\alpha-1)/2}(\sqrt{a}+|s|)^{4\alpha-2}}{\left[9e^{2}a^{2}(\sqrt{a}+|s|)^{4}+ \|m\|_{\infty}^{2}\right]^{\alpha -1/2}}
\cdot\frac{1}{(\sqrt{a}+|s|)^{\beta}}\\
& \le & C_{1}  \frac{a^{-(\beta-\gk{2}\alpha-1)/2}}{\|m\|_{\infty}^{2\alpha-1}} \cdot \frac{1}{\left(1+|s|\right)^{\beta-4\alpha+2}}\\
& \le & C_{1}\frac{e^{-(\beta-\gk{2}\alpha -1)j/2}}{\|m\|_{\infty}^{2\alpha-1}}\cdot \frac{1}{(\sqrt{e}+2|k|)^{\beta-4\alpha+2}}.
\end{eqnarray*}
}
This yields
\[
T_2
\le C_{1} \sum_{j = 1}^{\infty} e^{-(\beta-\gk{2}\alpha -1)j/4} \sum_{k \in \Z}
\frac{1}{(\sqrt{e}+2|k|)^{(\beta-4\alpha+2)/2}} \sum_{\{m\in \Z^{2}: \|m\|_{\infty} > \frac{3e}{2}\}} \|m\|_{\infty}^{-(\alpha-\frac{1}{2})},
\]
which is finite.

$T_3$. Suppose that $(a,s,t) \in Q_{1}(e^{j},ke^{-1/4}, e^{-1/2}m)$ with $j\le 0, k \in \Z$,
$m \in \Z^{2}$, and $\|m\|_{\infty} < \frac{3e}{2}$. First assume that $j < 0$, hence $a \le 1$. Then, by Lemma \ref{lem:decay2},
\[
|\mathcal{SH}_{\psi}f(a,s,t)|
\le C a^{3/4}\cdot \frac{a^{3\beta/2}}{a^{\frac{\beta}{2}}(1+a^{2})^{\beta} \left(1+\frac{|s|}{\sqrt{a}}\right)^{\beta}}
\le C \frac{a^{\beta+\frac{3}{4}}}{(1+|s|)^{\beta}}
\le C \frac{e^{(\beta+\frac{3}{4})j/2}}{(\sqrt{e}+2|k|)^{\beta}}.
\]
And hence
\begin{eqnarray} \nonumber
\lefteqn{\sum_{j = -\infty}^{0} \sum_{k \in \Z} \sum_{\{m\in \Z^{2}: \|m\|_{\infty}\le\frac{3e}{2}\}} \left\| \wq{\mathcal{SH}_{\psi}f} \cdot
\gk{\chi_{Q_{1}(e^{j},ke^{-1/4}, e^{-1/2}m)}}\right\|_{\infty}}\\ \label{eq:lastnumber3}
& \le & C \, (3e + 1)^{2} \sum_{j = -\infty}^{0} e^{(\beta+\frac{3}{4})j/2}\sum_{k \in \Z} \frac{1}{(\sqrt{e}+2|k|)^{\beta}},
\end{eqnarray}
which is finite.

If $j=0$, then $1 < a \le \sqrt{e}$, and it is easy to see that, by Lemma \ref{lem:decay2},
\[
|\mathcal{SH}_{\psi}f(a,s,t)| \le \frac{C}{(\sqrt{e}+2|k|)^{\beta}},
\]
hence
\beq \label{eq:lastnumber4}
\sum_{k \in \Z} \sum_{\{m\in \Z^{2}: \|m\|_{\infty} \le \frac{3e}{2}\}}
\left\| \wq{\mathcal{SH}_{\psi}f} \cdot {\chi_{Q_{1}(1,ke^{-1/4}, e^{-1/2}m)}} \right\|_{\infty} < \infty.
\eeq
Thus, by \eqref{eq:lastnumber3} and \eqref{eq:lastnumber4}, $T_3$ is finite.

$T_4$. Suppose that $(a,s,t) \in Q_{1}(e^{j},ke^{-1/4}, e^{-1/2}m)$ with $j > 0\; (\textrm{i.e.},\; a >1),\, k \in \Z$,
$m \in \Z^{2}$, and $\|m\|_{\infty} < \frac{3e}{2}$. By Lemma \ref{lem:decay2},
\[
|\mathcal{SH}_{\psi}f(a,s,t)|
\le C a^{3/4} \cdot \frac{a^{3\beta/2}}{a^{2\beta}(a^{-2}+1)^{\beta} \left(\sqrt{a}+ |s|\right)^{\beta}}
\le C a^{-\frac{\beta}{2}+\frac{3}{4}}\cdot \frac{1}{(1+|s|)^{\beta}}
\le C \frac{e^{-(\frac{\beta}{2}-\frac{3}{4})j}}{(\sqrt{e}+2|k|)^{\beta}}.
\]
This implies
\begin{eqnarray*}
T_4
& = &  \sum_{j = 1}^{\infty} \sum_{k \in \Z} \sum_{\{m\in \Z^{2}: \|m\|_{\infty}\le\frac{3e}{2}\}} \left\| \wq{\mathcal{SH}_{\psi}f} \cdot
\gk{\chi_{Q_{1}(e^{j},ke^{-1/4}, e^{-1/2}m)}}\right\|_{\infty}\\
& \le & C \, (3e+1)^{2} \sum_{j = 1}^{\infty} e^{-(\frac{\beta}{2}-\frac{3}{4})j}\sum_{k \in \Z} \frac{1}{(\sqrt{e}+2|k|)^{\beta}},
\end{eqnarray*}
and hence $T_4$ is finite.

Summarizing, we proved that the sum \eqref{eq:sumfinite} is finite, which finishes the proof.

\subsection{Proofs of Results from Section \ref{sec:approximation}}

\subsubsection{Proof of Lemma \ref{lem:hap}}
\label{subsubsec:lemmahap}

Let $\delta>0$ and $R'>1$ be given, and define
\[
R := \left(1+\frac{\delta}{2}\right)^{2}e^{\delta}R' + \delta \left(1+\frac{\delta}{2}\right)^{2}e^{\delta} + \delta.
\]

Our first claim is that for any $(p,q,r) \in \mathbb{S}$,
\begin{equation} \label{eq:clam}
Q_{\delta}\gk{(p,q,r)}\setminus Q_{R} \ne \emptyset \quad \Longrightarrow \quad Q_{\delta}(p,q,r)\cap Q_{R'} = \emptyset,
\end{equation}
a technical ingredient required in the main body of the proof.

Suppose now that there exists some $(a,s,t) \in Q_{\delta}(p,q,r)\setminus Q_{R}$. To prove \eqref{eq:clam}, we
assume that $(x,y,z) \in Q_{\delta}$, and show that this implies
\beq \label{eq:claim11}
 (p,q,r)\cdot(x,y,z) = (px,y+q\sqrt{x}, z + S_{y}A_{x}r) \notin Q_{R'}.
\eeq
First, our hypotheses imply
\begin{equation} \label{eq:lem51}
(a,s,t) \notin Q_{R},
\end{equation}
\begin{equation} \label{eq:lem52}
(p,q,r)^{-1}\cdot(a,s,t) = \left( \frac{a}{p}, s-q\sqrt{\frac{a}{p}}, t-S_{s}A_{a}S_{-\frac{q}{\sqrt{a}}}A_{\frac{1}{p}}r\right)
\in Q_{\delta},
\end{equation}
\begin{equation} \label{eq:lem53}
(x,y,z) \in Q_{\delta}.
\end{equation}
Next we use \eqref{eq:lem51}--\eqref{eq:lem53} to prove \eqref{eq:claim11}.

For this, first suppose that $a> e^{\frac{R}{2}}$.  Then \eqref{eq:lem51}--\eqref{eq:lem53} imply
\[
px = \frac{p}{a}(ax) \ge e^{-\frac{\delta}{2}}e^{\frac{R}{2}}e^{-\frac{\delta}{2}}
= e^{\frac{R}{2}-\delta} \ge e^{\frac{R'}{2}}.
\]
Similarly, if $a < e^{-\frac{R}{2}}$, then $px < e^{-\frac{R'}{2}}$. In either case,
\eqref{eq:claim11} holds.

Next, suppose that $s \ge \frac{R}{2}$. Then, since
\[
y+q\sqrt{x} = y-\left(s - q\sqrt{\frac{a}{p}}\right)\sqrt{\frac{p}{a}}\sqrt{x} + s \sqrt{\frac{p}{a}}\sqrt{x}.
\]
we obtain
\[
y+q\sqrt{x}
\ge -\frac{\delta}{2}-\frac{\delta}{2}\,e^{\frac{\delta}{4}}e^{\frac{\delta}{4}} + \frac{R}{2}\,e^{-\frac{\delta}{4}}e^{-\frac{\delta}{4}}
= \frac{R}{2}\,e^{-\frac{\delta}{2}}-\frac{\delta}{2}\,e^{\frac{\delta}{2}} -\frac{\delta}{2}
> \frac{R'}{2}.
\]
Similarly, if $s < -\frac{R}{2}$, then $y+q\sqrt{x} < -\frac{R'}{2}$. Again, in either case,
\eqref{eq:claim11} is true.

Finally, if $a \in [e^{-\frac{R}{2}},e^{\frac{R}{2}})$ and $s \in [-\frac{R}{2},\frac{R}{2})$, we
aim to prove $\|z+S_{y}A_{x}r\|_{\infty} > \frac{R'}{2}$. Since \eqref{eq:lem52} implies $\|t -
S_{s}A_{a}S_{-\frac{q}{\sqrt{p}}}A_{\frac{1}{p}}r\|_{\infty} \le \frac{\delta}{2}$ and \eqref{eq:lem51}
implies $\|t\|_{\infty} > \frac{R}{2}$, we obtain
\[
\|S_{s}A_{a}S_{-\frac{q}{\sqrt{p}}}A_{\frac{1}{p}}r\|_{\infty}
\ge \|t\|_{\infty} - \|t - S_{s}A_{a}S_{-\frac{q}{\sqrt{p}}}A_{\frac{1}{p}}r\|_{\infty}
\ge \frac{R-\delta}{2}.
\]
Hence
\[
\|r\|_{\infty}
\ge \frac{(R-\delta)}{2\left\|S_{s-q\sqrt{\frac{a}{p}}}\right\|_{\infty} \|A_{\frac{a}{p}}\|_{\infty}}
= \frac{(R-\delta)}{2\left(1+\left|s-q\sqrt{\frac{a}{p}}\right|\right) \max \left\{\frac{a}{p}, \sqrt{\frac{a}{p}}\right\}}
\ge \frac{(R-\delta)}{2e^{\frac{\delta}{2}}\left(1+\frac{\delta}{2}\right)},
\]
where the last inequality follows from \eqref{eq:lem52}. This implies
\[
\|S_{y}A_{x}r\|_{\infty}
\ge \frac{\|r\|_{\infty}}{\|S_{y}^{-1}\|_{\infty}\|A_{x}^{-1}\|_{\infty}}
= \frac{\|r\|_{\infty}}{(1+|y|)\max\left\{\frac{1}{x}, \frac{1}{\sqrt{x}}\right\}}
\ge \frac{R-\delta}{2e^{\delta}\left(1+\frac{\delta}{2}\right)^{2}}
 = \frac{R'+\delta}{2}.
\]
It now follows from \eqref{eq:lem53} that $\|z\|_{\infty} \le \frac{\delta}{2}$. Hence
\[
\|z+S_{y}A_{x}r\|_{\infty} \ge \|S_{y}A_{x}r\|_{\infty} - \|z\|_{\infty} \ge \frac{R'}{2}.
\]

Summarizing our findings, we can conclude that $(p,q,r)\cdot(x,y,z) = (px, y+q\sqrt{x}, z +
S_{y}A_{x}r) \notin Q_{R'}$, i.e., \eqref{eq:claim11} was proven, and hence \eqref{eq:clam}.

To finish the proof, let $\epsilon > 0$. Since $\mathcal{SH}_{\psi}(\Lambda)$ is a frame,
Theorem \ref{theo:main}(i) (notice that its proof does only require results from Sections
\ref{sec:intro}--\ref{sec:B0}) implies that $\cD^{+}(\Lambda) < \infty$. By
Proposition \ref{prop:equivalent_D+finite_D-positive},
\[
M := \sup_{(x,y,z) \in \mathbb{S}} \#(\Lambda \cap {Q_1}(x,y,z)) < \infty.
\]
Hence, for all $(p,q,r) \in \mathbb{S}$,
\beq\label{eq:msup}
\sup_{(x,y,z) \in \mathbb{S}} \#((p,q,r)^{-1}\cdot\Lambda \cap {Q_1}(x,y,z))
= \sup_{(x,y,z) \in \mathbb{S}} \#\left(\Lambda \cap {Q_1}\left((p,q,r)^{-1}\cdot(x,y,z)\right)\right)
\le M < \infty.
\eeq

By Lemma \ref{lem:covering}, the set $\{{Q_1}(e^{j},ke^{-1/4}, e^{-1/2}m)\}_{j, k \in \Z, m \in \ZZ^2}$
is relatively separated and for each point $(x,y,z) \in (p,q,r)^{-1}\cdot\Lambda \setminus Q_{R}$ there exists
some $j_0,k_0,m_0$ such that $(x,y,z) \in Q_{1}(e^{j_0},k_0e^{-1/4}, e^{-1/2}m_0)$. Also,
Lemma \ref{lem:covering} implies that there does not exist any element $\{Q_{1}(e^{j},ke^{-1/4}, e^{-1/2}m)\}_{j, k \in \Z, m \in \ZZ^2}$
which intersects more than $24\left(e^{\frac{\delta}{2}}+\frac{1}{2}\right)\left(e^{\delta}+
\frac{1}{2}\right)\left(e^{\frac{3\delta}{4}}+\frac{1}{2}\right)$ of the other elements in this set.

In light of \eqref{eq:clam} we now define
\[
J:=\{(j,k,m) \in \Z\times\Z\times\Z^{2} : Q_{1}(e^{j},ke^{-1/4}, e^{-1/2}m) \cap Q_{R'} = \emptyset\}.
\]
Since $\psi, g \in \mathcal{B}_{0}$, by Theorem \ref{theo:b0}, we have $\mathcal{SH}_{\psi}g \in
W_{\mathbb{S}}(C,L^{1}) \subset W_{\mathbb{S}}(C,L^{2})$. Hence, if $R'$ is large enough,
\begin{equation}\label{eq:hap1}
\sum_{(j,k,m) \in J} \|\mathcal{SH}_{\psi}g \cdot \chi_{Q_{1}(e^{j},ke^{-1/4}, e^{-1/2}m)}\|_{\infty}^{2} < \frac{\epsilon}{M}.
\end{equation}

Finally, by \eqref{eq:msup} and \eqref{eq:hap1}, we conclude that
\[
\sum_{(x,y,z) \in(p,q,r)^{-1}\Lambda\setminus Q_{R}} \left|\mathcal{SH}_{\psi}g(x,y,z)\right|^{2}
\le M\sum_{(j,k,m) \in J} \|\mathcal{SH}_{\psi}g \cdot \chi_{Q_{1}(e^{j},ke^{-1/4}, e^{-1/2}m)}\|_{\infty}^{2}
< \epsilon.
\]
This proves the lemma.

\subsubsection{Proof of Theorem \ref{theo:comparison}}
\label{subsubsec:comparison}

Since $\{ \sigma(a,s,t)\phi: (a,s,t) \in \Delta\}$ is a frame with frame bounds $0<A\le B< \infty$,
\cite[Lem. 3]{Gro08} together with the definition of the space $V$ implies that
{\allowdisplaybreaks
\begin{eqnarray}\nonumber
\lefteqn{\mbox{\rm tr}[T]}\\ \nonumber
& \ge & \frac{1}{B} \sum_{(a,s,t) \in \Delta} \langle T(\sigma(a,s,t) \phi), \sigma(a,s,t) \phi \rangle\\ \nonumber
& \ge & \frac{1}{B} \sum_{(a,s,t) \in (p,q,r)Q_{h}\cap \Delta} \langle T(\sigma(a,s,t) \phi), \sigma(a,s,t) \phi \rangle\\ \nonumber
& = & \frac{1}{B} \sum_{(a,s,t) \in (p,q,r)Q_{h}\cap \Delta} \langle  P_{W}P_{V}(\sigma(a,s,t) \phi), P_{V}\sigma(a,s,t)\phi \rangle\\ \label{eq:comp1}
& = & \frac{1}{B} \sum_{(a,s,t) \in (p,q,r)Q_{h}\cap \Delta} \left[ \langle \sigma(a,s,t)\phi, \sigma(a,s,t)\phi \rangle
- \langle (P_{W}-I)(\sigma(a,s,t) \phi), \sigma(a,s,t) \phi \rangle \right].
\end{eqnarray}
}
By Cauchy-Schwarz inequality, \eqref{eq:compare1}, and the Weak HAP, we estimate the second term in \eqref{eq:comp1} by
\begin{eqnarray*}
\lefteqn{\left|\langle (P_{W}-I)(\sigma(a,s,t) \phi), \sigma(a,s,t) \phi \rangle\right|}\\
& \le & \| (P_{W}-I) \sigma(a,s,t)\phi \|_{2} \cdot \|\sigma(a,s,t)\phi\|_{2}\\
& = & \mbox{\rm dist}\left(\sigma(a,s,t)\phi, W(R+he^{\frac{R}{2}}+Rhe^{\frac{R}{4}}, (p,q,r))\right) \cdot \|\phi\|_{2}\\
& \le & \mbox{\rm dist}\left(\sigma(a,s,t)\phi, W\left(R, (a,s,t)\right)\right) \cdot \|\phi\|_{2}\\
& \le & \epsilon \|\phi\|_{2}\\
& \le & \epsilon C.
\end{eqnarray*}
This yields a lower bound for the trace
\beq \label{eq:comp2}
\mbox{\rm tr}[T]
\ge \frac{1}{B} \sum_{(a,s,t) \in (p,q,r)Q_{}h\cap \Delta} C(C-\epsilon)
= \frac{C(C-\epsilon)}{B} \# (Q_{h}(p,q,r) \cap \Delta).
\eeq

Next we aim for an upper bound for $\mbox{\rm tr}[T]$. Since $T$ is a product of the orthogonal
projections, its eigenvalues are between $0$ and $1$, and hence
\beq \label{eq:comp3}
\mbox{\rm tr}[T]
\le \mbox{\rm rank}(T)
\le \dim (W(R+he^{\frac{R}{2}}+Rhe^{\frac{R}{4}}, (p,q,r)))
\le \# \left((p,q,r)Q_{R+he^{\frac{R}{2}}+Rhe^{\frac{R}{4}}} \cap \Lambda \right).
\eeq

Combining \eqref{eq:comp2} and \eqref{eq:comp3},
\begin{eqnarray*}
\frac{C(C-\epsilon)}{B} \cdot \# (Q_{h}(p,q,r) \cap \Delta)
& \le & \# \left( Q_{R+he^{\frac{R}{2}}+Rhe^{\frac{R}{4}}}(p,q,r) \cap \Lambda \right)\\
& \le & \frac{\#\left( Q_{R+he^{\frac{R}{2}}+Rhe^{\frac{R}{4}}}(p,q,r) \cap \Lambda \right)}
{(R+he^{\frac{R}{2}}+Rhe^{\frac{R}{4}})^{4}}\cdot \frac{(R+he^{\frac{R}{2}}+Rhe^{\frac{R}{4}})^{4}}{h^{4}}.
\end{eqnarray*}

Taking the infimum or supremum over all points $(p,q,r) \in \mathbb{S}$, and then the $\liminf$ or $\limsup$ as $h \to
\infty$, we obtain
\[
\frac{C(C-\epsilon)}{B} \cD^{-}(\Delta) \le \cD^{-}(\Lambda) (e^{\frac{R}{2}}+Re^{\frac{R}{4}})^{4}
\quad \mbox{and} \quad
\frac{C(C-\epsilon)}{B} \cD^{+}(\Delta) \le \cD^{+}(\Lambda) (e^{\frac{R}{2}}+Re^{\frac{R}{4}})^{4},
\]
respectively. The claim is proved.


\end{document}